\newtheorem{theorem}{Theorem}[section]
\newtheorem{lemma}[theorem]{Lemma}
\newtheorem{corollary}[theorem]{Corollary}
\newtheorem{condition}[theorem]{Condition}
\newtheorem{Main}[theorem]{Main Theorem}
\theoremstyle{definition}
\newtheorem{definition}[theorem]{Definition}
\theoremstyle{remark}
\newtheorem{remark}[theorem]{Remark}
\newcommand{\R}{\mathbb{R}}
\renewcommand{\div}{{\rm div}}
\newcommand{\curl}{{\rm curl}}
\newcommand{\grad}{{\rm grad}}
\newcommand{\norm}[1]{{\left\|#1\right\|}}
\numberwithin{equation}{section}
\begin{document}

\title{Unique Measure for the Time-Periodic Navier-Stokes on the Sphere}

\author{Gregory Varner}
\address{Division of Natural and Health Sciences, Mathematics Department
John Brown University, Siloam Springs, AR 72761}
\email{gvarner@jbu.edu}


\subjclass{35Q30, 60H15, 60J05, 93C20, 35R01, 37L40}

\date{\today}


\keywords{Navier-Stokes, invariant measure, sphere}


\begin{abstract}
This paper proves the existence and uniqueness of a time-invariant measure for the 2D Navier-Stokes equations on the sphere under a random kick-force and a time-periodic deterministic force. Several examples of deterministic forces satisfying the necessary conditions for there to be a unique invariant measure are given. The support of the measure is examined and given explicitly for several cases.
\end{abstract}

\maketitle

\section*{Introduction}
The existence and uniqueness of a time-invariant measure for the Navier-Stokes equations has been the subject of much recent research. A major advance was achieved in \cite{Kuksin2} where it was shown that, under a random bounded kick-type force, the Navier-Stokes system on the torus (bounded domains with smooth boundaries and periodic boundary conditions) has a unique time-invariant measure. Subsequently, the argument was refined to a more flexible coupling approach in \cite{Kuksin}, which paved the way for extending the argument to the case of a white-noise random force (\cite{Kuksin3}, \cite{Kuksin4}, or \cite{Shirikyan}). Except for the white-noise case, these methods focused on the case of zero deterministic forcing on the system and the equations on the torus. Of course, for meteorological purposes, it is desirable to consider the equations on the sphere and to require the deterministic force to be nonzero. This was done in \cite{Varner}, where a time-invariant measure for the Navier-Stokes equations on the sphere was shown to exist both under a random bounded kick-type force with a time-independent deterministic force and under a white-noise force. 

In this paper the work in \cite{Varner} is extended to include time-periodic deterministic forces. A similar result was established in \cite{Shirikyan4} for the torus and with a random perturbation activated by an indicator function. We instead use a random perturbation activated by a dirac function and include the more general case of a squeezing-type property. Even though the random force in \cite{Shirikyan4} is more general in the sense of time-dependence, we use the random kick-force as in \cite{Kuksin} and \cite{Varner} to allow less regularity assumptions and to highlight the similarities between the time-independent and time-periodic cases. 

The first section uses a combination of the approaches in \cite{Ilin}, \cite{Ilin5} \cite{Titi}, and \cite{Brzezniak} to define each of the terms in the Navier-Stokes equations on the sphere. Of utmost importance are the eigenvalues of the Laplacian term which allow the analysis to proceed as in the case of flat domains. In addition, we consider the Navier-Stokes equations under time-periodic forcing, establishing conditions for there to be a limiting solution that is periodic, including several cases where the period of the unique solution is the same as the force.

The second section presents the main theorem, which establishes the existence and uniqueness of an invariant measure for the kicked equations with a time-periodic deterministic external force. The proof of the main theorem is done by proving that necessary conditions hold for the applicability of Theorem 3.2.5 in \cite{Kuksin7}. As will be seen, the periodicity of the deterministic force allows the argument for stationary forces to be applied to the time-periodic case. The necessary conditions for the main theorem are shown for several cases including a contraction-type property and a squeezing-type property with ``large" random kicks. The main idea behind the contraction-type property is the exponential stability of solutions, i.e., the contraction of the flow to a unique solution, while the squeezing-type property is related to the idea of determining modes (\cite{Robinson}, p. 363). More precisely, if the projection of the initial conditions onto the first $M$ eigenfunctions is close enough then the solutions will converge.  
 
The third section recalls work done in \cite{Varner} describing the support of the measure. The support is described both in general and specifically for several examples. By combining results in \cite{Varner} and \cite{Heywood}, the support of the measure is also described in terms of a unique time-periodic solution in several cases, including some of potential meteorological interest.

\section{The Navier-Stokes Equations on the Sphere}

Let $M=S^{2}$ be the 2-dimensional sphere with the Riemannian metric induced from $R^{3}$. Let $\left(\phi,\lambda\right)$ be the spherical coordinate system on $M$, where $\phi\in\left(-\frac{\pi}{2},\frac{\pi}{2}\right)$ is the co-latitude (the geographical latitude) and $\lambda\in\left(0,2\pi\right)$ is the longitude. 
Furthermore, $\vec{n}=\left(\cos\phi\cos\lambda,\cos\phi\sin\lambda,\sin\phi\right)$ is the outward normal to $M$ in $R^{3}$. 
Let $H_{\phi}=\left|\frac{\partial\vec{n}}{\partial\phi}\right|$ and $H_{\lambda}=\left|\frac{\partial\vec{n}}{\partial\lambda}\right|$, then the unit vectors 
$$\vec{\phi}=\frac{1}{\left|H_{\phi}\right|}\frac{\partial\vec{n}}{\partial\phi} \quad \vec{\lambda}=\frac{1}{\left|H_{\lambda}\right|}\frac{\partial\vec{n}}{\partial\lambda}$$ 
form a basis for the tangent space of $M$, denoted $TM$ and induce on $M$ the Reimannian metric
$$\left(g_{ij}\right)=
\left(\begin{array}{cc}
	1 & 0 \\
	0 & \cos^{2}\phi
\end{array}\right).
$$  
Note that a vector tangent to $M$ can be decomposed as $u=u_{\phi}\vec{\phi} + u_{\lambda}\vec{\lambda}$ (unless there is a danger of ambiguity, we use the same notation for expressing functions and vector fields. The main exception are the normal and unit vectors, which are expressed as $\vec{n}$, $\vec{\phi}$, and $\vec{\lambda}$).

 The Navier-Stokes equations on the rotating sphere are
\begin{equation}\label{deterministicnavierstokes}
	\begin{split}
		&	\partial_{t}u
			+
			\nabla_{u}u
			-
			\nu\Delta u
			+
			l\,\vec{n}\times u
			+
			\nabla \ p 
			=
			f,  \\
		& \div \ u =0, 	\ u|_{t=0}=u_{0},
	\end{split}
\end{equation}
where $\vec{n}$ is the normal vector to the sphere, $l=2\Omega\sin\phi$ is the Coriolis coefficient, $\Omega$ is the angular velocity of the Earth, and ``$\times$" is the standard cross product in $\R^{3}$.

The operators $\div$ and $\nabla$ in \eqref{deterministicnavierstokes} have their conventional meanings on the sphere, i.e. for functions $\psi$ and vectors $u$

\begin{equation}
	\nabla \psi = \frac{\partial \psi}{\partial \phi}\vec{\phi}+\left(\frac{1}{\cos\phi}\frac{\partial \psi}{\partial \lambda}\right)\vec{\lambda},
	\quad
	\div u = \frac{1}{\cos\phi}\left(\frac{\partial}{\partial \lambda}u_{\lambda}+\frac{\partial}{\partial \phi}\left(u_{\phi}\cos\phi\right)\right). \nonumber
\end{equation}

To define the covariant derivative $\nabla_{u}u$ and the vector Laplacian $\Delta$ we first define the curl of a vector in terms of extensions. For any covering $\left\{O_{i}\right\}$ of $M$ by open sets, there is a corresponding set of ``cylindrical domains" $\widetilde{O}_{i}$ that cover a tubular neighborhood of $M$, $\widetilde{M}$. In each $\widetilde{O}_{i}$ we introduce the orthogonal coordinate system $\widetilde{x}_{1}, \ \widetilde{x}_{2}, \ \widetilde{x}_{3}$, where $-\epsilon<\widetilde{x}_{3}< \epsilon$ is along the normal to $M$ and for $\widetilde{x}_{3}=0$ the coordinates $x_{1},x_{2}$ agree with the spherical coordinates.

For a vector $u\in TM$ there is a vector $\widetilde u$ defined in $\widetilde{M}$ such that the restriction to $M$ satisfies $\widetilde{u}|_{M}=u\in TM$. For a vector field $w$ on the sphere, not necessarily tangent to it, the curl of $w$ is a vector field along the sphere defined as (\cite{Ilin}, p. 562)
\begin{equation}
	Curl\, w := Curl\,\widetilde{w}|_{M}. \nonumber
\end{equation}

\noindent For a vector field normal to $M$, the curl is well-defined and is tangent to $M$. However, for a vector field in $TM$ the curl is not well-defined but the third component of the curl, which we denote as $\curl_{n}$, is well-defined. Due to this, define the following operators (\cite{Titi}, p. 344).

\begin{definition}
Let $u$ be a smooth vector field on $M$ with values in $TM$ and let $\vec{\psi}$ be a smooth vector field on $M$ with values in  $ TM^{\perp}$, i.e. $\vec{\psi}=\psi\vec{n}$ for $\psi$ a smooth scalar function. Therefore, we identify the vector field $\vec{\psi}$ with the function $\psi$. Denote the extensions $\widetilde{u}$ and $\widetilde{\psi}$. Then for $x\in M, \ y\in \R^{3}$ define
$$\curl\,\vec{\psi}(x) =\curl\,\psi= Curl\widetilde{\psi}(y)|_{y=x}$$
$$\curl_{n}u(x) = \left(Curl\widetilde{u}(y)\cdot\vec{n}(y)\right)\vec{n}(y)|_{y=x},$$
where on the right side $Curl$ denotes the standard $\curl$ operator in $\R^{3}$.

\noindent Note that these definitions are independent of the extensions - see \cite{Ilin}, p. 562.
\end{definition}

The covariant derivative and vector Laplacian are now defined in terms of the $\curl$ and $\curl_{n}$ operators (\cite{Ilin}, p. 562-563).

\begin{definition} The covariant derivative on the sphere is given by
	\begin{equation}
		\nabla_{u}u := \nabla\frac{\left|u\right|^{2}}{2}
	-
	u\times \curl_{n}u.
	\end{equation}
\end{definition}

\begin{remark}
As with the curl and $\curl_{n}$ operators, it is possible to define the gradient, divergence, and covariant derivative in terms of extensions (see \cite{Ilin} or \cite{Brzezniak}). However (\cite{Titi}, p. 344),
$$\curl\,\psi = -\vec{n}\times\nabla\psi, \quad \curl_{n}v=-\vec{n}\div\left(\vec{n}\times v\right).$$
Thus both $\curl$ and $\curl_{n}$, and thus  the gradient, divergence, and covariant derivative, can be defined without resorting to extensions.
\end{remark}

\begin{definition} The vector Laplacian on the sphere is given by (\cite{Ilin}, p. 563)
	\begin{equation}
		\Delta u := \nabla\,\div\, u - \curl\,\curl_{n}u.
	\end{equation}
\end{definition}


\noindent Thus, the Navier-Stokes system on the two-dimensional sphere, i.e., for vector fields on $M$, is:  
\begin{equation}
	\begin{split}
		&	\partial_{t}u
			+
			\nabla\frac{\left|u\right|^{2}}{2}
			-
			u\times\curl_{n}u
			+
			\nu\curl\curl_{n}u
			+
			l\,\vec{n}\times u
			+
			\nabla\, p
			=
			f, \\
		& \div u=0, \ u|_{t=0}=u_{0}.
	\end{split}
\end{equation}


\subsection{Existence and Uniqueness for the Deterministic Equations}

Let $L^{p}(M)$ and $L^{p}(TM)$ be the standard $L^{p}$-spaces of the square integrable scalar functions and tangent vector fields on $M$, respectively. The inner products for $L^{2}(M)$ and $L^{2}(TM)$ are given by:
  $$\left(u,v\right)_{L^{2}(M)}:= \int_{M}uv dM, \ u,v \in L^{2}(M), $$
	$$\left(u,v\right)_{L^{2}(TM)}:= \int_{M}u\cdot v dM, \ u, v \in L^{2}(TM). $$
	
	\noindent Note that these are integrals over oriented manifolds and thus are defined intrinsically using a partition of unity. Locally, however, $dM=\cos\phi d\phi d\lambda$. The induced norm on $L^{2}$ will be denoted $\norm{\cdot}_{L^{2}}$. 

\noindent Let $\psi$ be a scalar function and $v$ be a vector field on $M$. For $s\geq 0$, the standard Sobolev spaces $H^{s}(TM)$ have norm 
$$
	\norm{\psi}^{2}_{H^{s}(M)}:= \norm{\psi}^{2}_{L^{2}(M)}
	+
	\left\langle -\Delta^{s} \psi, \psi\right\rangle_{L^{2}(M)}$$
and

$$
	\norm{u}^{2}_{H^{s}(TM)}:= \norm{u}^{2}_{L^{2}(TM)}
	+
	\left\langle -\Delta^{s} u, u\right\rangle_{L^{2}(TM)}.$$



\noindent Since the sphere is simply connected, the Hodge Decomposition Theorem gives that the space of smooth vector fields on $M$ can be decomposed as (\cite{Ilin}, p.564):
\begin{eqnarray}
		C^{\infty}(TM) \nonumber \\
		= &\left\{u: u=\grad\phi, \phi\in C^{\infty}(M)\right\}\oplus
									\left\{u: u=\curl\phi, \phi\in C^{\infty}(M)\right\} \nonumber \\
	 = &\left\{u: u=\grad\phi, \phi\in C^{\infty}(M)\right\}\oplus
									V_{0}. \nonumber						
\end{eqnarray}

\noindent Define the following closed subspaces of $L^{2}(TM)$ and $H^{1}(TM)$ respectively:
\begin{definition}  
\begin{equation}
	H := 
	\curl(H^{1}(M)), \nonumber
\end{equation}
 with norm
\begin{equation}
	\norm{u}_{H} = 
	\norm{u}_{L^{2}(TM)}.
\end{equation}
$H$ is the $L^{2}$ closure of $V_{0}$ and thus $\div\,u=0$ for $u\in H$.
\end{definition}

\begin{definition}
\begin{equation}
	V := 
	\curl(H^{2}(M)) \nonumber
\end{equation}
with norm 
\begin{equation}
	\norm{u}_{V} = 
	\norm{\curl_{n}u}_{L^{2}(TM)}.
\end{equation}
$V$ is the $H^{1}$ closure of $V_{0}$ and thus $\div\,u=0$ for $u\in V$. Furthermore, $V$ is compactly embedded into $H$, and by the Poincare Inequality (equation \eqref{Poincare})  the $V$ norm is equivalent to the $H^{1}$ norm for divergence-free vector fields.
\end{definition}

\begin{definition} For a vector field $u$, define the Laplacian on divergence-free vector fields as
	\begin{equation}
		Au:= \curl\curl_{n}u,
	\end{equation}
	where if $\div\,u=0$ then $Au= -\Delta u$. 
\end{definition}

The following theorem implies that the analysis used for the stochastic Navier-Stokes system on flat domains can be used for the system on the sphere. Its proof is identical to the case of flat domains with smooth boundary conditions, see \cite{Robinson}, pp. 162-163 or \cite{Ilin}, p. 565.

\begin{theorem} The operator $A=\curl\curl_{n}$ is a self-adjoint positive-definite operator in $H$ with eigenvalues $0< \lambda_{1}\leq \lambda_{2}\leq ... $ with the only accumulation point $\infty$. Moreover, the eigenvalues correspond to an orthonormal basis in H (orthogonal in V).
\end{theorem}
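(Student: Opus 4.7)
The plan is to realize $A$ as the Friedrichs extension of the symmetric form arising from the curl pairing, and then invert. I would first fix the natural domain $D(A)=V\cap H^{2}(TM)$ and work with the sesquilinear form
\begin{equation}
a(u,v):=(\curl_{n}u,\curl_{n}v)_{L^{2}(TM)},\qquad u,v\in V. \nonumber
\end{equation}
Using the identities $\curl\psi=-\vec n\times\nabla\psi$ and $\curl_{n}v=-\vec n\,\div(\vec n\times v)$ from the Remark, together with the divergence theorem on the closed manifold $M$ (no boundary terms), I would verify the Green-type identity $(Au,v)_{H}=a(u,v)$ for $u\in D(A)$, $v\in V$, exactly as in Ilin or Robinson. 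This simultaneously gives symmetry of $A$ on $D(A)$ and identifies $\|u\|_{V}^{2}=a(u,u)$, so in particular $A$ is nonnegative.

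Second, I would establish strict positivity. By the Poincaré inequality on $V$ (equation~\eqref{Poincare}, cited in the definition of $V$), there is $c>0$ with $a(u,u)\ge c\|u\|_{H}^{2}$ for all $u\in V$. The form $a$ is then continuous and coercive on the Hilbert space $V$, so by Lax–Milgram, for each $f\in H$ there is a unique $u\in V$ with $a(u,v)=(f,v)_{H}$ for all $v\in V$, and the solution operator $T:H\to V$, $Tf=u$, is bounded. Symmetry of $a$ gives self-adjointness of $T$ as an operator on $H$, and coercivity gives $(Tf,f)_{H}\ge 0$ with strict positivity unless $f=0$.

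Third, I would invoke the compact embedding $V\hookrightarrow\!\!\hookrightarrow H$ stated in the definition of $V$. Composing $T:H\to V$ with this compact inclusion yields a compact self-adjoint positive operator $T:H\to H$. The spectral theorem for compact self-adjoint operators then produces an orthonormal basis $\{e_{k}\}$ of $H$ with $Te_{k}=\mu_{k}e_{k}$, $\mu_{1}\ge\mu_{2}\ge\dots>0$, $\mu_{k}\to 0$. Setting $\lambda_{k}=\mu_{k}^{-1}$ and unwinding the definition of $T$ gives $Ae_{k}=\lambda_{k}e_{k}$ with $D(A)$ dense in $H$, $0<\lambda_{1}\le\lambda_{2}\le\cdots\to\infty$, and the relation $a(e_{i},e_{j})=\lambda_{i}(e_{i},e_{j})_{H}=0$ for $i\neq j$ shows the basis is $V$-orthogonal as well. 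Self-adjointness of the unbounded operator $A$ follows from that of $T$ by the standard inversion argument.

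The main obstacle is the Green-type identity on the sphere, since all other steps are abstract functional analysis. One must be careful that the cross-product/curl identities on $M$ allow the formal manipulation
\begin{equation}
(\curl\curl_{n}u,v)_{H}=(\curl_{n}u,\curl_{n}v)_{L^{2}} \nonumber
\end{equation}
with no leftover boundary or curvature terms, using tangency of $v$ and closedness of $M$. Once this is in hand, the rest is the familiar Stokes-operator argument on a compact manifold, and it can be cited directly from Ilin or Robinson as the author indicates.
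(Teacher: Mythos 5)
Your proposal is correct and is essentially the argument the paper invokes: the paper gives no proof of its own but cites the standard Stokes-operator construction in Robinson (pp.~162--163) and Il'in (p.~565), which is exactly your Green's identity / Lax--Milgram / compact-resolvent / spectral-theorem chain. The only point worth watching is that coercivity of the form should not lean on the Poincar\'e inequality in a circular way; on the sphere it follows directly from the stream-function representation $u=\vec{n}\times\nabla\psi$, $\curl_{n}u=\Delta\psi$ and the spectral gap of the scalar Laplacian.
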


Let $P_{H}$ be the projection onto $H$. Since the projection commutes with $\partial_{t}$ and $A$, the projection of \eqref{deterministicnavierstokes} onto $H$ is 
\begin{equation}
	\partial_{t}u 
	+
	\nu Au
	+ 
	B(u,u) 
	+
	C(u)
	= 
	f \label{projectednavierstokes}
\end{equation}
where $B(u,u)+C(u)=P_{H}(\nabla_{u}u+ l\,\vec{n}\times u)$. Furthermore, for all $v\in V$
\begin{equation}
 	\left\langle B(u,u) + C(u),v\right\rangle_{H} 
 	= 
 	b(u,u,v) + \left\langle l\,\vec{n}\times u,v\right\rangle_{H}, 
\end{equation}
where $b(u,v,w)$ is the standard trilinear form associated with the Navier-Stokes equations, i.e. 
\begin{equation}
	b(u,v,w) = \pi \sum_{i,j=1}^{3}\int_{M}u_{j}D_{i}v_{j}w_{j}dx,
\end{equation}
where $\pi$ is the orthogonal projection onto $TM$ (\cite{Ilin}, p. 561), and the trilinear terms satisfies estimates analogous to those in the case of flat domains, see Lemma \ref{trilinearestimateslemma}.


We now state the existence and uniqueness of solutions to the deterministic Navier-Stokes equations in terms of the projected equations, as is standard.


\begin{theorem}\label{existenceofprojectedsolution} Suppose that $f \in  L^{2}(0,T; H)$ and $u_{0}\in H$ then a solution of the Navier-Stokes equations with Coriolis \eqref{projectednavierstokes} exists uniquely and $u\in L^{2}(0,T;V)\cap C([0,T];H)$. If $u_{0}\in V$ then the unique solution is strong, i.e. $u\in L^{2}(0,T;D(A))\cap C([0,T];V)$ and $\dfrac{du}{dt} \in L^{2}(0,T; H)$. 
\end{theorem}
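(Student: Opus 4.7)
The plan is to use the standard Faedo--Galerkin method adapted to the sphere, exploiting the spectral properties of $A$ established in the preceding theorem so that virtually every step parallels the flat-domain case. Let $\{e_k\}$ be the orthonormal basis of $H$ consisting of eigenfunctions of $A$, and let $P_m$ denote the orthogonal projection onto $H_m := \mathrm{span}\{e_1,\ldots,e_m\}$. Seek a Galerkin approximation $u^m(t) = \sum_{k=1}^{m} g^m_k(t)\,e_k$ satisfying
$$\frac{du^m}{dt} + \nu A u^m + P_m B(u^m,u^m) + P_m C(u^m) = P_m f, \quad u^m(0) = P_m u_0.$$
This is a finite-dimensional system of ODEs with locally Lipschitz right-hand side, so it admits a local-in-time solution.

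Next, I derive the basic energy identity by pairing the Galerkin equation with $u^m$ in $H$. The trilinear form satisfies $b(u^m,u^m,u^m)=0$ (as recorded in Lemma \ref{trilinearestimateslemma}), and the Coriolis contribution vanishes since $\vec{n}\times u^m$ is pointwise orthogonal to $u^m$. This yields
$$\tfrac{1}{2}\frac{d}{dt}\norm{u^m}_H^2 + \nu\norm{u^m}_V^2 = \langle f, u^m\rangle_H,$$
and Young's inequality applied to the right-hand side produces uniform bounds in $L^\infty(0,T;H)\cap L^2(0,T;V)$, which extends $u^m$ to the full interval $[0,T]$. The 2D trilinear estimate $\norm{B(u,u)}_{V^*} \leq c\norm{u}_H\norm{u}_V$ from Lemma \ref{trilinearestimateslemma}, together with boundedness of the Coriolis operator on $H$, then controls $\partial_t u^m$ in $L^2(0,T;V^*)$. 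Aubin--Lions compactness supplies a subsequence converging strongly in $L^2(0,T;H)$, which suffices to pass to the limit in the quadratic term; continuity $u\in C([0,T];H)$ follows from the standard interpolation of $L^2(0,T;V)$-functions with $\partial_t u \in L^2(0,T;V^*)$. Uniqueness is obtained by subtracting two solutions, testing the difference $w = u_1 - u_2$ with itself, controlling $|b(w,u_1,w)|\leq c\norm{w}_H\norm{w}_V\norm{u_1}_V$ by Young's inequality, and applying Gronwall.

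For the strong-solution case with $u_0\in V$, I pair the Galerkin equation with $A u^m$ in $H$. Using the 2D Ladyzhenskaya-type bound $|b(u^m,u^m,Au^m)|\leq c\norm{u^m}_H^{1/2}\norm{u^m}_V\norm{Au^m}_H^{3/2}$, together with the elementary estimate $|\langle C(u^m),Au^m\rangle_H|\leq c\norm{u^m}_V\norm{Au^m}_H$, Young's inequality yields a differential inequality of the form
$$\frac{d}{dt}\norm{u^m}_V^2 + \nu\norm{Au^m}_H^2 \leq c\norm{u^m}_H^2\norm{u^m}_V^4 + c\bigl(\norm{u^m}_V^2 + \norm{f}_H^2\bigr).$$
Combined with the $L^\infty(0,T;H)\cap L^2(0,T;V)$ bound already obtained, Gronwall's lemma delivers uniform bounds in $L^\infty(0,T;V)\cap L^2(0,T;D(A))$, from which $\partial_t u \in L^2(0,T;H)$ follows by rereading the equation, and continuity in $V$ is then standard. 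The main obstacle, and essentially the only point where the curved geometry intervenes, is the validity of the trilinear and Ladyzhenskaya estimates on $M$; once these are supplied by Lemma \ref{trilinearestimateslemma}, the argument is a direct transcription of the flat-domain proof.
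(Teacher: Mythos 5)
Your Faedo--Galerkin argument is correct and is essentially the standard flat-domain proof that the paper itself invokes by citation (Robinson, pp.\ 245--254, and Il'in, Theorem 2.2), with the spectral theorem for $A$ and the trilinear estimates of Lemma \ref{trilinearestimateslemma} serving as the only sphere-specific inputs, exactly as you identify. One available simplification, recorded as equation \eqref{bproperty} and used by the paper in its own appendix estimates, is that $b(u,u,Au)=0$ on the sphere, so the strong-solution step closes without the Ladyzhenskaya bound and without the $\norm{u^m}_H^2\norm{u^m}_V^4$ Gronwall factor.
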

\noindent The proof is the same as the case of bounded domains with smooth boundaries and periodic boundary conditions (see \cite{Robinson}, pp. 245-254 and \cite{Ilin}, Theorem 2.2).

\subsection{Time-Periodic Navier-Stokes Equations on the Sphere}

Assume $f\in L^{\infty}(0,\infty; H)$ (thus in $L^{2}(0,T;H)$ for any $T<\infty$) is periodic with period $T>0$. While Theorem \ref{existenceofprojectedsolution} gives the existence of a strong solution to the Navier-Stokes equations on the sphere, it will be necessary to know the behavior of the system under a periodic force. Toward that end, we recall a theorem from \cite{Heywood}, p. 19. For the theorem, the following definition will be needed.

\begin{definition} Let $w=u-v$ be a perturbation of the solution $u$ of the Navier-Stokes equations. $u$ is  {\bfseries exponentially stable} if there exist numbers $\delta, \alpha, A>0$ such that every perturbation at time $t_{0}$, with $w_{0}=w(t_{0})$ and $\norm{w_{0}}_{H}<\delta$ satisfies
\begin{equation}
	\norm{u(t)-v(t)}_{H}=\norm{w(t)}_{H}\leq Ae^{-\alpha(t-t_{0})}\norm{w_{0}}_{H}, \ \text{for \ all} \ t\geq t_{0}.
\end{equation}
	
\noindent	$\delta$ is called the {\bfseries stability radius}. If $\delta=\infty$, a solution is called {\bfseries globally exponentially stable}.
\end{definition}

\begin{theorem}\label{Heywoodtheorem}
	Suppose there exists a globally defined solution to the Navier-Stokes equation with initial condition in $H$, has $\norm{S_{t}u_{0}}_{H^{1}}$ bounded, and is exponentially stable. If $f$ is time-periodic with period T then there exists a time-periodic solution $u_{\infty}$ with period $kT$ for some integer k, such that
\begin{equation}
	\norm{S_{t}u_{0}-S_{t}u_{\infty}}_{H^{1}} = O(e^{-\alpha t}), \ \text{some} \ \alpha>0, \ \text{as} \ t\rightarrow \infty. 
\end{equation} 

\noindent If the stability radius $\delta$ is large enough or the period is small enough then $T_{\infty}=T$. In all cases, $u_{\infty}$ is exponentially stable. 
\end{theorem}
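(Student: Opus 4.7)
The plan is a Poincar\'e-map argument combined with exponential stability. Since $f$ is $T$-periodic, the time-$T$ solution map $\Phi: H \to H$, $\Phi z := (S_t z)|_{t = T}$, is well defined, and the identification $S_{t + nT} z = S_t(\Phi^n z)$ (a consequence of the $T$-periodicity of $f$) lets me convert periodic points of $\Phi$ into time-periodic solutions of \eqref{projectednavierstokes}.

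First I would extract a compact accumulation point. By hypothesis $\{\norm{S_t u_0}_{H^1}\}_{t \geq 0}$ is bounded, so the sequence $\{\Phi^n u_0\}_{n \geq 0}$ is bounded in $V$; by the compact embedding $V \hookrightarrow H$, there is a subsequence $\Phi^{n_k} u_0 \to w$ in $H$. I would then use exponential stability to promote $w$ to a periodic point of $\Phi$. Fix $k_0$ so large that $\norm{\Phi^{n_{k_0}} u_0 - w}_H < \delta$; comparing $S_t u_0$ with the solution started from $w$ at time $n_{k_0} T$ gives, by the exponential-stability hypothesis,
\[
\norm{S_t u_0 - S_{t - n_{k_0} T} w}_H \leq A\, e^{-\alpha (t - n_{k_0} T)} \norm{\Phi^{n_{k_0}} u_0 - w}_H, \qquad t \geq n_{k_0} T.
\]
Evaluating at $t = n_{k'} T$ and letting $k' \to \infty$ shows $\Phi^{n_{k'} - n_{k_0}} w \to w$, so $w$ is recurrent under $\Phi$. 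Choosing two returns $m_1 < m_2$ with $\Phi^{m_i} w$ within $\delta/2$ of $w$ and applying the same estimate once more---now with reference solution $S_t w$---shows that $\Phi^{m_2 - m_1}$ is a strict contraction near $w$; by recurrence, its unique fixed point is $w$ itself. Setting $k := m_2 - m_1$ and $u_\infty(t) := S_t w$ yields a $kT$-periodic solution.

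The rate $\norm{S_t u_0 - S_t u_\infty}_{H^1} = O(e^{-\alpha t})$ then follows by reapplying the exponential-stability estimate above with $v \equiv u_\infty$, valid once the $H$-distance falls below $\delta$. To pass from $H$-decay to $H^1$-decay I would use parabolic smoothing: the difference $\eta := S_t u_0 - u_\infty$ solves a linearised Navier-Stokes-type equation whose coefficients are uniformly bounded in $V$, so testing against $A\eta$ and applying Gronwall's inequality upgrades the $H$-rate to the $H^1$-rate at the same exponent, possibly with a larger constant. If $\delta$ is large enough or $T$ small enough that $\norm{\Phi^n u_0 - \Phi^{n+1} u_0}_H < \delta$ eventually, I may take $m_1 = n_{k_0}$ and $m_2 = n_{k_0}+1$ in the argument above, forcing $k = 1$. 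Finally, exponential stability of $u_\infty$ is inherited from that of $u$ by a Gronwall perturbation argument, since the two solve the same equation and $\norm{u - u_\infty}_H$ decays exponentially.

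The main obstacle is the periodicity step: turning the recurrent limit $w$ into a genuine periodic point of $\Phi$, rather than leaving it with more complicated $\omega$-limit behavior. This is precisely where the nonlinear exponential-stability hypothesis does the essential work, by forcing a suitable iterate of $\Phi$ to be a strict contraction near $w$ and thus to admit $w$ as its unique fixed point.
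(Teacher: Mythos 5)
The paper itself does not prove this theorem: it is quoted from \cite{Heywood} (p.~19), with only the remark that the $H^1$ hypothesis on the data can be dropped because of smoothing. Your proposal is therefore a reconstruction of the Heywood--Rannacher argument, and its overall architecture --- the period map $\Phi=S_T$, compactness of $\{\Phi^n u_0\}$ in $H$ from boundedness in $H^1$, exponential stability to produce a periodic point, and smoothing to upgrade $H$-decay to $H^1$-decay --- is the right one.

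There is, however, a genuine gap at exactly the step you flag as the crux. Exponential stability of $u$ controls the distance of a perturbed solution \emph{to the reference orbit of $u$}: $\norm{S_s z - u(t_0+s)}_H \leq Ae^{-\alpha s}\norm{z-u(t_0)}_H$. This shows $\Phi^n$ maps a small ball about $u(t_0)$ into a set of small diameter about $u(t_0+nT)$, but it does not give the pairwise Lipschitz bound $\norm{\Phi^n z_1-\Phi^n z_2}_H\leq \kappa\norm{z_1-z_2}_H$, $\kappa<1$, that ``strict contraction near $w$'' requires: for $z_1,z_2$ close to one another but at distance comparable to $\delta$ from $u(t_0)$, the triangle inequality only yields $\norm{\Phi^n z_1-\Phi^n z_2}_H\leq Ae^{-\alpha nT}\left(\norm{z_1-u(t_0)}_H+\norm{z_2-u(t_0)}_H\right)$, which is not proportional to $\norm{z_1-z_2}_H$. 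Moreover, re-running the estimate ``with reference solution $S_t w$'' presupposes that $S_t w$ is exponentially stable, which is part of the conclusion and which you only argue at the very end, so the order of the argument is circular. The standard repair stays entirely on the orbit of $u_0$: by your compactness step choose $p<q$ with $\norm{\Phi^p u_0-\Phi^q u_0}_H<\delta$, set $k=q-p$, and regard the solution started from $\Phi^q u_0$ at time $pT$ as a perturbation of $u$; $T$-periodicity of $f$ gives $S_{rT}(\Phi^q u_0)=\Phi^{q+r}u_0$, whence
\[
\norm{\Phi^{p+r}u_0-\Phi^{q+r}u_0}_H\leq Ae^{-\alpha rT}\norm{\Phi^{p}u_0-\Phi^{q}u_0}_H .
\]
Taking $r=jk$ shows the consecutive gaps of $\{\Phi^{p+jk}u_0\}_j$ decay geometrically, so this subsequence is Cauchy in $H$; its limit $w$ satisfies $\Phi^k w=w$ by continuity of $S_T$ on $H$ (inequality \eqref{continuityofNS}), with no fixed-point theorem or recurrence needed. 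The remainder of your outline --- the $H^1$ upgrade via smoothing (cf.\ \eqref{HboundonVdifference}), $k=1$ when $\delta$ is large or $T$ is small, and the inherited stability of $u_\infty$ --- then goes through as you describe.
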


While the theorem in \cite{Heywood} assumes that the initial condition is in $H^{1}$,  this is for $\norm{S_{t}u_{0}}_{H^{1}}$ to be bounded. By Theorem \ref{existenceofprojectedsolution} (or equation \eqref{VtoHcontraction}) this norm is bounded for $u_{0}\in H$ for any $t>0$. Furthermore, Theorem \ref{Heywoodtheorem} implies convergence in $H$ and the proof in \cite{Heywood} is easily adapted to show exponential convergence in $H$ instead.

It is well-known that if the force is small enough (see Remark \ref{differenceofsolutionssatisfiescontraction}) then the stability radius is infinite. We conclude this section by examining two more cases where the stability radius is infinite, i.e. there is a unique globally exponentially stable solution with the same period as the force. The proofs of the following lemmas are found in the Appendix following necessary estimates. The main idea behind both of the lemmas is that the (spherical) scalar Laplacian commutes with longitudinal derivatives, allowing for terms in the calculations only dependent on latitude to vanish.

\begin{definition} A solution to the Navier-Stokes equations, $u$, is called zonal if for each fixed $t$, $u(t)$ is only a function of latitude, i.e. the function has no longitudinal dependence.
\end{definition}

\begin{lemma}\label{thiszonallemma} Suppose that the time-periodic force $f\in L^{\infty}(0,\infty;H)$ is such that there is a zonal solution. Then the solution is unique with the same period as $f$.
\end{lemma}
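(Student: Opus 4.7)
The plan is to prove that the zonal solution $u$ is globally exponentially stable in $H$, which, together with Theorem~\ref{Heywoodtheorem} (applied with infinite stability radius), forces any periodic solution to coincide with $u$ and in particular share the period of $f$. The argument begins with the standard energy estimate for the perturbation $w = v - u$, where $v$ is any other solution of \eqref{projectednavierstokes} with the same force. The difference equation is $\partial_t w + \nu A w + B(v,v) - B(u,u) + C(w) = 0$; taking the $H$-inner product with $w$, using $\langle C(w), w\rangle = 0$ (since $\vec n \times w \perp w$ pointwise), the standard identity $b(v,w,w) = 0$, and the splitting $B(v,v) - B(u,u) = B(v,w) + B(w,u)$, yields
\[
  \frac{1}{2}\frac{d}{dt}\|w\|_H^2 + \nu\|w\|_V^2 = -b(w,u,w).
\]

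The key step is to exploit zonality to handle the right-hand side. Since $\div u = 0$ and $u$ depends only on $\phi$, the divergence-free condition forces $u = u_\lambda(\phi)\vec\lambda$. Using the stream-function representation $w = \curl \psi$ (available since $M = S^2$ is simply connected, by the Hodge decomposition stated in the paper), a direct computation reduces $b(w,u,w)$ to an integral involving latitude-only coefficients multiplied by products of $\partial_\phi \psi$ and $\partial_\lambda \psi$. Integration by parts in $\lambda$ shifts derivatives off $\psi$, and the identity $[\Delta, \partial_\lambda] = 0$ for the scalar Laplace--Beltrami operator allows Laplacian-containing terms to be rewritten so that the surviving quantities depend on $\phi$ alone. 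Integrating a latitude-only function against a $\lambda$-derivative of a $2\pi$-periodic quantity gives zero, so either $b(w,u,w) = 0$ outright or it is absorbable into $\nu\|w\|_V^2$. In either case we arrive at
\[
  \frac{d}{dt}\|w\|_H^2 + \nu\|w\|_V^2 \leq 0.
\]

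Poincar\'e's inequality then produces $\|w(t)\|_H \leq e^{-\nu\lambda_1 t/2}\|w(0)\|_H$, establishing global exponential stability of $u$. Any $T'$-periodic solution $v$ yields a periodic and bounded $w$ which also satisfies this exponential decay; hence $w \equiv 0$ and $v = u$. Therefore the zonal solution $u$ is the unique periodic solution, and its period coincides with that of $f$. The main obstacle, and the reason this step is relegated to the appendix, is the cancellation in $b(w,u,w)$: a generic energy inequality would demand smallness of $\|u\|_V$, whereas here the cancellation is structural, arising from the axisymmetry of $u$ combined with the commutation of $\Delta$ and $\partial_\lambda$ on scalars, a feature special to the sphere.
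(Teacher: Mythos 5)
Your overall strategy (prove the zonal solution is globally exponentially stable, then invoke Theorem \ref{Heywoodtheorem} with infinite stability radius to get uniqueness and the period) is the right one, but the estimate you build it on fails at the decisive step. You test the perturbation equation with $w$ and claim that zonality makes $b(w,u,w)$ vanish ``or absorbable into $\nu\norm{w}_V^2$.'' Neither is true. For a zonal field, incompressibility and regularity at the poles force $u=U(\phi)\vec{\lambda}$, and $b(w,u,w)$ is then the Reynolds-stress energy-production term, essentially $\int_M U'(\phi)\,w_\phi w_\lambda\,dM$ up to metric corrections. The integrand is quadratic in $w$ and is not the $\lambda$-derivative of a periodic quantity, so the ``latitude-only function against a $\lambda$-derivative'' cancellation does not apply; the flat analogue $\int U'(y)\,w_1w_2\,dx\,dy$ with $w=\nabla^{\perp}\psi$, $\psi=\sin x\sin y+\cos x\cos 2y$, $U'=\cos y$ is already nonzero. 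Absorbing the term into $\nu\norm{w}_V^2$ instead would require $\norm{u}_{H^1}$ small relative to $\nu$, which collapses the lemma into the already-known small-force case (Remark \ref{differenceofsolutionssatisfiescontraction}) and defeats its purpose. This is the classical limitation of the $L^2$ energy method for shear flows: it only yields conditional stability.

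The paper sidesteps this by working at the enstrophy level: it pairs the perturbation equation with $Aw$ rather than $w$. There the identities \eqref{bproperty}, \eqref{strongerthanskiba}, and \eqref{Calc3} --- $b(w,w,Aw)=0$, and $b(u,w,Aw)=b(w,u,Aw)=0$ for zonal $u$ --- together with the Coriolis cancellation \eqref{Skiba-estimate} annihilate every nonlinear term exactly, leaving $\partial_t\norm{w}_{H^1}^2+2\nu\norm{w}_{H^2}^2=0$ and hence unconditional exponential decay in $H^1$, a fortiori in $H$. The commutation $[\Delta,\partial_\lambda]=0$ you cite is indeed the key ingredient, but it only produces a cancellation once the vorticity $\Delta\psi$ appears quadratically, i.e.\ when the test function is $Aw$, so that the Jacobian identity $\int_M J(a,[\Delta\psi]^2)\,dM=0$ applies. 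Replacing your $L^2$ pairing with the $Aw$ pairing and invoking Lemma \ref{trilinearestimateslemma} repairs the argument.
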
 

\begin{remark} 
For a stationary force, it is sufficient that the force is zonal to have a stationary zonal solution (\cite{Ilin5}, p. 988) which follows since $A$ forms an isomorphism between the spaces $D(A)$ and $H$ and for $u$ zonal
\begin{equation}\nonumber
\norm{B(u,u)+C(u)}_{H}\leq C\norm{B(u,u)+C(u)}_{V}=0.
\end{equation}

Analogously, the Stoke's equation $\partial_{t}u+\nu Au$ forms an isomorphism between the spaces
\begin{eqnarray} 
 & \left\{u, \ u\in L^{2}(0,T;D(A)), \ u'\in L^{2}(0,T;H), \ u(0)=u(T)\right\} \nonumber \\
 & \text{and} \ L^{2}(0,T;H). \nonumber
\end{eqnarray}
Thus, to have a zonal solution it is sufficient that force is zonal. (The proof that the equations form an isomorphism is analogous to the result in \cite{Furshikov}, Lemma 3.1, p. 27 or \cite{Lions}, Chapter 4, Section 15.)
\end{remark}

\begin{lemma}\label{AlmostZonalProp} Let $f\in L^{\infty}(0,\infty;H)$ be a force that generates a zonal solution. There exists $\delta >0$ such that for any $g\in  L^{\infty}(0,\infty;H)$ satisfying $\norm{f-g}_{L^{\infty}(0,\infty;H)} < \delta$ there is a unique globally exponentially stable solution to the Navier-Stokes equations.
\end{lemma}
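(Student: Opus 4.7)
The plan is to verify the hypotheses of Theorem \ref{Heywoodtheorem}: given $g$ with $\norm{g-f}_{L^{\infty}(0,\infty;H)}<\delta$, I will show that every solution $u_g$ of \eqref{projectednavierstokes} with force $g$ is globally defined, has a uniformly bounded $H^{1}$-norm, and is exponentially stable. Uniqueness then follows from the exponential stability. The strategy is to compare $u_g$ with the zonal solution $u_f$ provided by hypothesis (and Lemma \ref{thiszonallemma}); the key observation is that $u_f$ satisfies $B(u_f,u_f)+C(u_f)=0$ by the Remark following Lemma \ref{thiszonallemma}, and hence solves the \emph{linear} equation $\partial_t u_f+\nu Au_f=f$. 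Standard linear parabolic estimates then give $u_f\in L^{\infty}(0,\infty;V)$ with norm depending only on $\nu$ and $\norm{f}_{L^{\infty}(0,\infty;H)}$.

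Setting $w=u_g-u_f$, a short calculation yields
\[
\partial_t w+\nu Aw+B(u_f,w)+B(w,u_f)+B(w,w)+C(w)=g-f.
\]
Pairing with $w$ in $H$ and using $\langle B(v,\cdot),\cdot\rangle_{H}=0$ for divergence-free $v$ together with $\langle C(v),v\rangle_{H}=0$ gives
\[
\tfrac12\tfrac{d}{dt}\norm{w}_{H}^{2}+\nu\norm{w}_{V}^{2}+b(w,u_f,w)=\langle g-f,w\rangle_{H}.
\]
The crux, and main obstacle, is controlling $b(w,u_f,w)$. I would decompose $w=w_0+w^{*}$ into its $\lambda$-average (zonal) and $\lambda$-mean-zero parts; both are divergence-free because $\lambda$-averaging commutes with the divergence operator. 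Since zonal divergence-free vector fields on the sphere take the form $b(\phi)\vec{\lambda}$ (smoothness at the poles forces the $\vec{\phi}$-component to vanish), a direct computation gives $b(w_0,u_f,w_0)=0$, and the cross terms $b(w_0,u_f,w^{*})$ and $b(w^{*},u_f,w_0)$ also vanish because after $\lambda$-integration the integrand factors into a zonal function and a $\lambda$-mean-zero function --- precisely the vanishing mechanism noted in the paragraph before Lemma \ref{thiszonallemma}. Thus only $b(w^{*},u_f,w^{*})$ remains. Combining the 2D Ladyzhenskaya estimate $\abs{b(w^{*},u_f,w^{*})}\le c\norm{u_f}_{V}\norm{w^{*}}_{H}\norm{w^{*}}_{V}$ with Young's inequality and the improved Poincar\'e estimate on $\lambda$-mean-zero divergence-free fields (which comes from the commutativity of $\Delta$ with $\partial_\lambda$) absorbs the trilinear term into $\nu\norm{w}_{V}^{2}$ with a strictly positive margin, yielding
\[
\tfrac{d}{dt}\norm{w}_{H}^{2}+\alpha\norm{w}_{H}^{2}\le C\norm{g-f}_{H}^{2}
\]
for $\alpha,C>0$ depending only on $\nu$ and $\norm{f}_{L^{\infty}(0,\infty;H)}$. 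Gronwall then gives a uniform bound on $\norm{w(t)}_{H}$, and hence $H^{1}$-boundedness of $u_g$ via Theorem \ref{existenceofprojectedsolution}, once $\delta$ is chosen small enough.

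For exponential stability I apply the same energy argument to the difference $\eta=u_1-u_2$ of two solutions of \eqref{projectednavierstokes} sharing force $g$: writing $u_2=u_f+w_2$ with $\norm{w_2}_{V}$ small by the previous step (shrinking $\delta$ further if necessary), the trilinear contribution splits as $b(\eta,u_2,\eta)=b(\eta,u_f,\eta)+b(\eta,w_2,\eta)$, with the first term handled by the zonal decomposition and the second absorbed by the smallness of $w_2$. This produces $\norm{\eta(t)}_{H}\le Ae^{-\alpha(t-t_0)}\norm{\eta(t_0)}_{H}$, giving global exponential stability of $u_g$. Theorem \ref{Heywoodtheorem} then furnishes the unique time-periodic globally exponentially stable solution, completing the proof.
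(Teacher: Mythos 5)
Your overall strategy (verify the hypotheses of Theorem \ref{Heywoodtheorem} by comparing $u_g$ with the zonal solution) matches the paper's intent, but the energy estimate at the heart of your argument has a gap that cannot be closed at the $L^2$ level. After pairing the equation for $w=u_g-u_f$ with $w$, you must control $b(w,u_f,w)$, and your zonal/mean-zero decomposition still leaves $b(w^{*},u_f,w^{*})$. You propose to absorb it via Ladyzhenskaya, Young, and an ``improved Poincar\'e estimate on $\lambda$-mean-zero divergence-free fields.'' There is no such improvement on the sphere: the lowest eigenvalue of $A$ restricted to $\lambda$-mean-zero fields is attained already by the stream functions $Y_1^{\pm 1}$ and equals the overall first eigenvalue $\lambda_1$, so the restriction gains nothing. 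Even granting some gain $\lambda^{*}>\lambda_1$, Young's inequality leaves a term $\frac{C}{\nu}\norm{u_f}_{H^1}^2\norm{w^{*}}_H^2$, and absorbing it into $\nu\lambda^{*}\norm{w^{*}}_H^2$ requires a bound of the form $\norm{u_f}_{H^1}\leq c\,\nu$, i.e.\ a smallness condition on $f$ itself. The lemma asserts the conclusion for an \emph{arbitrary} force generating a zonal solution, with only $\norm{f-g}$ small; your argument effectively reduces to the small-force case of Remark \ref{differenceofsolutionssatisfiescontraction} and never exploits the zonal structure where it is actually needed. The same defect recurs in your stability step, where $b(\eta,u_f,\eta)$ is again ``handled by the zonal decomposition.''

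The paper's proof avoids this entirely by working at the enstrophy ($H^1$) level: it pairs the perturbation equation with $Aq$ rather than $q$, so that the terms involving the zonal solution vanish \emph{identically} by \eqref{strongerthanskiba} and \eqref{Calc3} ($b(u,q,Aq)=b(q,u,Aq)=0$ for $u$ zonal), with no size restriction on $u$. The only terms requiring estimation are $b(\bar v,q,Aq)$ and $b(q,\bar v,Aq)$, where $\bar v=v-u$ is the deviation of the perturbed solution from the zonal one; a second $H^1$ energy estimate on $\bar v$, in which the zonal cancellations again remove every term not involving $f-g$, shows that $\int\norm{\bar v}_{H^2}^2$ and $\norm{\bar v}_{H^1}^4$ are controlled by $\norm{f-g}_{L^{\infty}(0,\infty;H)}$, and this is where the smallness $\delta$ enters. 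If you redo your computation pairing with $Aw$ (resp.\ $A\eta$) and invoke Lemma \ref{trilinearestimateslemma} for the zonal cancellations, your outline becomes essentially the paper's proof.
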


\begin{definition}
We define an {\bfseries almost zonal solution} to be a solution guaranteed by Lemma \ref{AlmostZonalProp}.
\end{definition}
 
It is worth noting that while Lemma \ref{AlmostZonalProp} allows for nonzonal solutions, they are only a ``small'' perturbation from being zonal.


\section{The Main Theorem}

This section presents the main theorem on the existence and uniqueness of a (time-)invariant measure for the Navier-Stokes system with random kicks and a time-periodic deterministic force, where time-invariance is understood to mean that the random variables generated by restricting the solutions to instants of time proportional to the period of the deterministic forcing term have a unique stationary probability distribution which all other distributions converge to exponentially (i.e. it is exponentially mixing).  A similar result in \cite{Shirikyan4} established that the Navier-Stokes equations on the torus have an unique invariant measure under a deterministic time-periodic forcing. While the random force considered in \cite{Shirikyan4} allows for more generality in the sense of time-dependence, the random force also requires additional regularity. We instead use the bounded random kick-force as in \cite{Kuksin} and \cite{Varner} to highlight the similarities to the time-independent case and due to weaker assumptions on the regularity of the random force. In particular, we use Theorem 3.2.5 in \cite{Kuksin7} which focuses on the properties of the solution operator and the perturbed flow, which are not as explicit in \cite{Shirikyan4}. In addition, we consider cases of potential meteorological interest and more general deterministic forces.

\subsection{The Perturbed Navier-Stokes Equations}
Consider the Navier-Stokes system with forcing $f\in L^{\infty}\left(0,\infty,H\right)$ time-periodic with period $T$, and a random kick-force $g$ bounded in $H$:
\begin{equation} \label{kicked}
\begin{split}
	&\partial_{t}u
	+
	\nu Au
	+
	B(u,u)
	+
	C(u)
	=
	f+ g,\\    
	&g = 
	\sum_{k=1}^{\infty}
	\eta_{k}(x)\delta_{kT}(t),
	 \ \eta_{k}\in H, 
	 \  \norm{\eta_{k}}_{H}< \infty \ \forall k.
\end{split}
\end{equation}

The notation from now on will be:
\begin{itemize}
	\item $S_{t}v_{0}$ is the solution of the deterministic equation with initial condition $v_{0}\in H$ at time $t\geq 0$.
	\item For simplicity of notation take the period as $T=1$ and denote $S_{1}=S$. 
	\item $u^{t}(v_{0})$ is the solution of \eqref{kicked} with initial condition $v_{0}$ at time $t\geq 0$. 
\end{itemize}

\noindent Then
\begin{equation}
\begin{split}
	& u^{0}(v_{0})= v_{0} \\
  & u^{k+1}(v_{0})
  = Su^{k}(v_{0})+\eta_{k+1}(x),  \ k=0,1,2,\ldots \\
  & u^{k+\tau}(v_{0}) = S_{\tau}u^{k}(v), 
   \ 0\leq\tau < 1, 
  \ k=0,1,2,\ldots
\end{split}
\end{equation}
In other words, the solution between kicks is given by the flow of the deterministic system with time-periodic forcing. Notice that due to the periodicity of the force, if all the kicks were zero then for any positive integer $n$, $S_{n}v_{0}= u^{n}(v_{0}).$ 

Following \cite{Kuksin}, pp. 356-357, assume the kicks satisfy:

\begin{condition} \label{mainkicksassumption}
Let $\left\{e_{j}\right\}$ be the orthonormal basis for the Hilbert space $H$, 
then 
		\begin{equation} 
		 	\eta_{k} = \sum_{j=1}^{\infty}b_{j}\zeta_{jk}e_{j}, \ b_{j}\geq 0, \ B_{0}=
		 	 \sum_{j=1}^{\infty}b_{j}^{2}<\infty, 
		\end{equation}
for $\left\{\zeta_{jk}\right\}$ a family of independent, identically distributed real-valued variables, with $\left|\zeta_{jk}\right|\leq 1$ for all $j,k$. 
Their common law has density $p_{j}$ with respect to Lebesgue measure where $p_{j}$ is of bounded variation with support in the interval $\left[-1,1\right]$.
Furthermore, for any $\epsilon>0$, $\displaystyle \int_{\left|r\right|<\epsilon}p_{j}(r)dr>0.$  
\end{condition}


For a given positive integer $k$ and $v_{0}$, the Markov transition measure 
$\beta(k,v_{0}, .)$ is defined as
\begin{equation}
    \beta(k,v_{0},\Gamma)= \mathbb{P}\left\{u^{k}(v_{0})\in \Gamma\right\}, \ k\geq 0, \ v_{0}\in H, \ \Gamma\in \mathcal{B}(H), \nonumber
\end{equation}
where $\mathcal{B}(H)$ is the Borel $\sigma$-algebra of $H$. The Markov transition measure is the probability that the stochastic flow with initial condition $v_{0}$ is in the set $\Gamma$ at time $k$, i.e. $u^{k}(v_{0})_{\#}\mathbb{P}$. 

 
The Markov semigroup $\beta_{k}$ 
on bounded continuous functions is defined by
\begin{equation}
    \beta_{k}h(v) = \mathbb{E}h(u^{k}(v)) = \int_{H}h(z)\beta(k,v,dz), \nonumber
\end{equation}
where $h:H \to \R$ is a 1-periodic bounded continuous function.

\begin{definition}
A measure $\mu \in P(H)$ is called invariant 
if $\beta_{k}^{*}\mu = \mu$ where $P(H)$ is the space of probability measures on $H$ and
\begin{equation}
    \beta_{k}^{*}\mu(\Gamma)= \int_{H}\mathbb{P}\left\{u^{k}(v)\in \Gamma\right\}\mu(dv), \ \Gamma\in \mathcal{B}(H). \nonumber
\end{equation}
\end{definition}

The next two definitions deal with behavior of the deterministic flow and are necessary for the statement of the main theorem.

\begin{definition}
We say that there is an {\bf asymptotically stable solution} if for some $q<1$, for all $R>0$, and for all $t\geq 0$
\begin{equation} \label{FForceCondition}
    \norm{S_{t}u_{0}-S_{t}v_{0}}_{H}
    \leq
    C(R)q^t
    \norm{u_{0}-v_{0}}_{H}
    \ \forall \ u_{0},v_{0}\in B_{H}(R)
\end{equation}
where $C(R)$ can depend on the norm of the force and $B_{H}(R)$ is the ball of radius $R$ centered at 0 in $H$.
\end{definition}
\noindent An asymptotically stable solution, is also globally exponentially stable for any radius $\delta >0$.

Note that the following satisfy condition \eqref{FForceCondition}:
\begin{itemize}
\item
$f=0$ and ``small'' forces, see Remark \ref{differenceofsolutionssatisfiescontraction}.
\item
Time-periodic forces that give zonal flow, see Lemma \ref{thiszonallemma}.
\item
Time-periodic forces that give almost zonal flow, see Lemma \ref{AlmostZonalProp}.
\end{itemize}

 Since the Navier-Stokes equations have an absorbing set (\cite{Ilin}, p. 572) any asymptotically stable solution is in a ball of finite radius in $H$, call it $D(f)$. In addition, an asymptotically stable solution guarantees that two deterministic solutions with different initial conditions will become arbitrarily close together as $t\rightarrow \infty$. In the same way, any point that locally acts like an asymptotically stable solution will be a (local) contraction of the flow and should be considered. However, due to the nature of the kicks, it will be possible to assume that only the first finitely many dimensions must have $b_{j}>0$. This implies that the solution only needs to be locally stable in a finite number of dimensions.

\begin{definition}\label{finitelystable} Let $D(f)$ be the radius of the deterministic absorbing set, $R>0$, and $P_{M}$ be the projection onto the first $M$ eigenfunctions. A point $u\in B_{H}(D(f))$ is called {\bf finitely stable} if for some $M\geq 1$, for some $\delta>0$, and for all $v\in B_{H}(R)$ satisfying $\norm{P_{M}u-P_{M}v}_{H}\leq \delta$, 
\begin{equation}
	\norm{S_{t}u-S_{t}(v)}_{H}\rightarrow 0. 
\end{equation}
In other words, if the finite-dimensional projections are ``close enough,'' then the solutions converge.
\end{definition}

Notice that a finitely stable point satisfies the conditions of Theorem \ref{Heywoodtheorem} (the stability radius and $\delta$ from Definition \ref{finitelystable} can be taken the same) and captures the same concept as determining modes (\cite{Robinson}, page 363). Furthermore, if $\delta$ is large enough relative to $T$ then the periodic solution converged to has period $T$. 

While the assumption of a finitely stable point allows for the possibility of multiple solutions, this also means that additional assumptions are needed for the structure of the kicks.

\begin{definition} The following is called the {\bf big kick assumption.} 
Let $M$ be as in Definition \ref{finitelystable}. For some $N\geq M$ let the $b_{i}$ from Condition \ref{mainkicksassumption} satisfy
\begin{align}
	& b_{1}\geq 2D(f),\nonumber \\
	& b_{j}\geq \dfrac{2D}{\lambda_{j}^{1/2}} \ \text{for} \ 2\leq j \leq M, \\ \label{brandnewassumption} 
	& b_{j}>0 \ \text{for} \ M<j\leq N . \nonumber
\end{align}
where $D=D(f)$ is the same as in \eqref{Condition1b} and $\lambda_{j}$ is the eigenvalue corresponding to $e_{j}(x)$.
\end{definition}

Notice that by equations \eqref{Vcontraction} and \eqref{Poincare} the $b_{j}$ are assumed to be twice as large as $\norm{Q_{j-1}u(t)}_{H}$ if the initial condition is zero (where $Q_{n}=I-P_{n}$). If the stochastic flow is within $\delta$ of the ball of radius $D(f)$, the kicks are large enough to ``kick'' the first $M$-dimensions of the flow within $\delta$ of the first $M$ dimensions of any point, in particular a finitely stable point, in the deterministic absorbing ball with nonzero probability.  Thus intuitively, the big kick assumption allows the perturbation to ``kick" the flow from anywhere in the absorbing ball into the stability radius of a finitely stable point.

\begin{Main} \label{THEMAINTHEOREM}
Let the kicks satisfy Condition \eqref{mainkicksassumption} and assume $f\in L^{\infty}(0,\infty;H)$ is time-periodic with period $T=1$, and that either:
\begin{itemize}
	\item there exists at least one finitely-stable point and the
big kick assumption holds or 
	\item there is an asymptotically stable solution.
\end{itemize}
Then there is $N$ such that if
$b_{j}>0$ for $j=1,2,...,N$ the following hold:
\begin{enumerate}
\item
The system \eqref{kicked}
has invariant measure $\mu$.
\item
The invariant measure is unique.
\item
For any $R>0$ there is $C(R,f)>0$ such that for any $h$ 1-periodic real-valued Lipschitz function on $H$
\begin{equation}
    \left|\beta_{k}h(u)-\left(\mu,h\right)\right|
    \leq C(R,f)e^{-ck}\norm{h}_{L} \ \text{for} \ k\geq 0, \ \forall \norm{u}_{H}\leq R.  \nonumber
\end{equation}
The constant $c>0$ is a constant not dependent on
$h, \ u, \ R, \ \text{or} \ k$.
\end{enumerate}
\end{Main}

\subsection{Proof of the Main Theorem}

The main theorem will follow from applying a modified version of Theorem 3.2.5 in \cite{Kuksin7}. Assume the following conditions.

\begin{condition} \label{generalcontraction}
For any $R$ and $r$ such that $R> r>0$ there exist $C= C(R,f), \ D=D(f), \ a= a(R,r)< 1$ all positive and there exists an integer $n_{0}=n_{0}(R,r)\geq 1$ such that
         \begin{align}
             & \norm{S_{n}u_{0}}_{H} \leq \max\left\{a\norm{u_{0}}_{H}+ D,r + D\right\}, \ u_{0}\in B_{H}(R), \ \forall n\geq n_{0} \label{Condition1b}, \\
             & \norm{Su_{0}-Sv_{0}}_{H} \leq C \norm{u_{0}-v_{0}}_{H},
              \ \forall u_{0},v_{0}\in B_{H}(R); \label{Condition1a}
         \end{align}
where $\norm{\eta_{k}}^{2}\leq B_{0}$ for all $k$.
\end{condition}

\begin{condition}\label{conditionforconvergence}
 For any $R >0$ there is a decreasing sequence $\gamma_{N}(R,f)>0, \ \gamma_{N}\rightarrow 0$ as $N\rightarrow \infty$ such that
        \begin{equation}
            \norm{(I-P_{N})(Su_{0}-Sv_{0})}_{H} \leq \gamma_{N}(R,f)\norm{u_{0}-v_{0}}_{H}, \ \forall
             u, v\in B_{H}(R),  \label{Condition2}
        \end{equation}
        where $P_{n}$ is the projection onto the first $N$ eigenfunctions $e_{j}$.
\end{condition}

\noindent Assume the kicked flow also satisfies:

\begin{condition}\label{neededforabsorbingset}
For $K$, the support of the distribution of $\eta_{k}$,
\begin{equation} \nonumber
\begin{split}
   K
   :=
   \left\{ u=\sum_{j=1}^{\infty}u_{j}e_{j}: \left|u_{j}\right|\leq b_{j} \ \forall j\geq 1\right\}
\end{split}
\end{equation}
and for any $B$ bounded in $H$ let
\begin{equation}
\begin{split}
   &A_{0}(B):= B, \\
   &A_{k}(B):= S(A_{k-1}(B))+K, \ k\geq 1.
\end{split}
\end{equation}
Then there exists $\rho>0$ such that for any $B$ there is $k_{0}(B,\rho)\geq 1$ such that:
\begin{equation}   \label{AbsorbingSet}
\begin{split}
     k \geq k_{0}
     \Rightarrow
     A_{k}(B) \subset B_{H}(\rho) .
\end{split}
\end{equation}
\end{condition}


\noindent In addition, assume that the kicked flow satisfies the following type of controllability.
\begin{condition}\label{controllability}
For any $d>0$ and $R>0$ there exists integer $l=l(d,R)>0$ and real number $x=x(d)>0$ such that
        \begin{equation}
 \mathbb{P}\left\{\|u^{l}(v_{0})-u^{l}(w_{0})\|_{H}\leq d\right\} \geq x, \ \text{for \ all} \ v_{0},w_{0}\in B_{H}(R).\label{Condition3}
        \end{equation}
\end{condition}

\noindent In other words, the kicked flow from two different initial conditions has a positive probability of becoming arbitrarily close together in finite time.

We now formulate a modified version of Theorem 3.2.5 from \cite{Kuksin7}.

\begin{theorem}  \label{CTheorem}
If the forced-kicked system \eqref{kicked} satisfies Conditions \ref{mainkicksassumption}, \ref{generalcontraction}, \ref{conditionforconvergence}, \ref{neededforabsorbingset}, and \ref{controllability}
then there is $N\geq 1$ such that if $b_{j}>0$ for all $1\leq j\leq N$, there exists an unique invariant measure, and
for any $R>0$ there is $C(R,f)>0$ such that for any 1-periodic real-valued Lipschitz function $h$ on $H$
\begin{equation}
    \left|\beta_{k}h(u)-\left(\mu,h\right)\right|
    \leq C(R,f)e^{-ck}\norm{h}_{L} \ \text{for} \ k\geq 0, \ \forall \norm{u}_{H}\leq R. \label{exponentialconvergence} \nonumber
\end{equation}
The constant $c>0$ is a constant not dependent on
$h, \ u, \ R, \ \text{or} \ k$. ($\norm{\cdot}_{L}$ is the standard Lipschitz norm.) 
\end{theorem}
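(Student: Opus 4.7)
The plan is to reduce the proof to a direct application of Theorem 3.2.5 in \cite{Kuksin7}, exploiting the time-periodicity with $T=1$. Because the random kicks land at integer times and $f$ has period one, the time-one map $S=S_{1}$ of the deterministic flow is autonomous, and the discrete-time process $\{u^{k}(v_{0})\}_{k\geq 0}$ is a stationary Markov chain on $H$ governed by $u^{k+1}=Su^{k}+\eta_{k+1}$. Conditions \ref{generalcontraction}--\ref{controllability} are tailored to be exactly the hypotheses required by Kuksin--Shirikyan's theorem once specialized to this chain, so the work splits into verifying that their coupling machinery goes through with only cosmetic changes in the time-periodic setting, and in checking that the advertised continuous-time bounds actually yield these discrete-time hypotheses.

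First I would obtain existence of an invariant measure by Krylov--Bogoliubov. Condition \ref{neededforabsorbingset} supplies a deterministic absorbing ball $B_{H}(\rho)\supset A_{k}(B)$ for large $k$, so the laws $\beta(k,v_{0},\cdot)$ are essentially supported in $B_{H}(\rho)$. The Lipschitz bound \eqref{Condition1a} gives the Feller property of $\beta_{k}$, while the parabolic smoothing of the Navier--Stokes flow (Theorem \ref{existenceofprojectedsolution}) together with the compact embedding $V\hookrightarrow H$ yields tightness of the Cesaro averages of $\beta(k,v_{0},\cdot)$. These three ingredients produce at least one invariant probability measure $\mu\in P(H)$. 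Next I would prove uniqueness and the exponential mixing bound by coupling. For $u_{0},v_{0}\in B_{H}(R)$, Condition \ref{controllability} lets one bring the two trajectories within prescribed distance $d$ in $H$ after $l=l(d,R)$ steps with probability at least $x(d)$. Once they are close, I split $H$ via $P_{N}$ and $Q_{N}=I-P_{N}$: on the high modes, the squeezing inequality \eqref{Condition2} forces a genuine contraction with factor $\gamma_{N}\to 0$; on the low modes, the positive density of the kicks near the origin (the last clause of Condition \ref{mainkicksassumption}) supports a maximal coupling of the components $\zeta_{jk}$, $j\leq N$, so that $P_{N}(Su^{k}+\eta_{k+1})$ and $P_{N}(Sv^{k}+\eta_{k+1})$ coincide with probability bounded below uniformly on $B_{H}(\rho)$. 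Choosing $N$ large enough that $\gamma_{N}<1/(2C)$ with $C$ from \eqref{Condition1a}, a single successful coupling step becomes a strict contraction of the $H$-distance; iterating, together with the recurrence to $B_{H}(\rho)$ from \eqref{AbsorbingSet}, yields exponential contraction in the Kantorovich metric, which dualizes to the claimed bound on $|\beta_{k}h(u)-(\mu,h)|$ with a constant $c>0$ depending only on $N$ and the coupling success probability.

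The main obstacle will be the coupling lemma on the low modes: one must show a uniform lower bound, over $u^{k},v^{k}\in B_{H}(\rho)$, on the probability of making $P_{N}(u^{k+1})=P_{N}(v^{k+1})$. This uses the bounded-variation assumption on the densities $p_{j}$ and the positivity $\int_{|r|<\epsilon}p_{j}(r)\,dr>0$ from Condition \ref{mainkicksassumption}, together with the fact that, thanks to \eqref{Condition1a}, the $P_{N}$-images $P_{N}Su^{k}-P_{N}Sv^{k}$ lie in a bounded set of the $N$-dimensional subspace. The time-periodicity of $f$ never enters the probabilistic part of the argument---it is absorbed entirely into the autonomy of $S$---so once this coupling estimate is in place, the remainder of the proof is an essentially verbatim transcription of the autonomous discrete-time argument of \cite{Kuksin7}, and the threshold $N$ in the statement is precisely the $N$ produced by the coupling lemma.
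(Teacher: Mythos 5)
Your proposal is correct and follows essentially the same route as the paper: the paper likewise reduces to the Kuksin--Shirikyan coupling framework, with your low-mode maximal coupling plus high-mode squeezing step being exactly the content of its key coupling lemma (Lemma~\ref{thisisacouplinglemma}), and your use of Condition~\ref{controllability} to first bring far-apart trajectories within distance $d$ matching the paper's two-case iteration. The only minor imprecision is that the low-mode coincidence probability is not uniformly bounded below on $B_{H}(\rho)$ but rather is $1-Cd$ for trajectories already within distance $d$, which is what drives the halving iteration; this does not affect the validity of the argument.
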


\noindent While Theorem \ref{CTheorem} can be proved using the same approach as in \cite{Shirikyan4}, we instead use an approach similar to that in \cite{Kuksin}, \cite{Kuksin7}, and \cite{Varner}. While the proof in \cite{Kuksin7} has no external force $f$ and the one in \cite{Varner} only allows a time-independent force (which is periodic in time for any period), there are only two main differences in the above conditions and the ones used in \cite{Kuksin7}: the inequalities now depend on the norm of $f$ and the use of Condition \ref{controllability}. Due to these slight differences, a brief sketch of the proof of Theorem \ref{CTheorem} based on the arguments found in \cite{Kuksin} and \cite{Kuksin8} is given (the proof is summarized well in \cite{Shirikyan4}, p. 10). The main idea behind the argument is the following lemma (\cite{Kuksin7}, Lemma 3.2.6 or \cite{Shirikyan4}, Prop. 2.5), which can now depend on the $L^{\infty}$ norm of the force. 

Recall that a pair of random variables $\left(\zeta_{1},\zeta_{2}\right)$ defined on a probability space is called a {\it coupling} for the given measures $\mu_{1}, \mu_{2}$ if the distribution of $\zeta_{j}$ is $\mu_{j}$, $j=1,2.$

\begin{lemma}\label{thisisacouplinglemma}
Under the conditions of Theorem \ref{CTheorem}, there exists a constant $d>0$ such that for any points $u,u'\in B_{H}(R)$ with $\norm{u-u'}_{H}\leq d$ the measures $\beta(1,u_{1,2},\cdot)$ admit a coupling $V_{1,2}=V_{1,2}(u_{1},u_{2};\omega)$ such that 
\begin{equation}
\mathbb{P}\left\{\norm{V_{1}-V_{2}}_{H}\geq \frac{d}{2}\right\}\leq Cd \nonumber
\end{equation}
where $C>0$ does not depend on $u,u'.$
\end{lemma}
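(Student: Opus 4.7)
The plan is to build a coupling by splitting the kick into its low and high frequency parts and applying a maximal coupling only on the finite-dimensional low part, where the noise densities have bounded variation. Write $V_{j}=Su_{j}+\eta^{(j)}$ with $\eta^{(j)}=\sum_{i\ge 1}b_{i}\zeta_{i}^{(j)}e_{i}$. First, use Condition \ref{conditionforconvergence} to pick $N=N(R,f)$ so large that $\gamma_{N}(R,f)\le 1/4$; then whatever coupling we use, we automatically have
\begin{equation}
\norm{(I-P_{N})(Su_{1}-Su_{2})}_{H}\le \gamma_{N}\norm{u_{1}-u_{2}}_{H}\le d/4. \nonumber
\end{equation}
So if we simply take $(I-P_{N})\eta^{(1)}=(I-P_{N})\eta^{(2)}$ (shared high-frequency kick) and manage to force $P_{N}V_{1}=P_{N}V_{2}$, then $\norm{V_{1}-V_{2}}_{H}\le d/4<d/2$, which is the desired event.

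The remaining task is thus to couple the $N$ independent coordinates of $P_{N}\eta^{(1)}$ and $P_{N}\eta^{(2)}$ so that $P_{N}\eta^{(2)}=P_{N}\eta^{(1)}-P_{N}(Su_{1}-Su_{2})$ with as large probability as possible. Writing the $j$-th Fourier coefficient of $Su_{1}-Su_{2}$ as $\alpha_{j}$, we need $\zeta_{j}^{(2)}=\zeta_{j}^{(1)}+\alpha_{j}/b_{j}$ for $j\le N$. By Condition \ref{generalcontraction},
\begin{equation}
|\alpha_{j}|\le \norm{Su_{1}-Su_{2}}_{H}\le C(R,f)\,d, \nonumber
\end{equation}
so the required shift in each coordinate is $O(d)$; note this is where the assumption $b_{j}>0$ for $1\le j\le N$ is crucial. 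For each $j\le N$ independently, I invoke the maximal coupling of the law with density $p_{j}(r)$ against its translate with density $p_{j}(r-\alpha_{j}/b_{j})$. Since $p_{j}$ has bounded variation, a standard estimate gives
\begin{equation}
d_{TV}\bigl(p_{j}(\cdot),\,p_{j}(\cdot-\alpha_{j}/b_{j})\bigr)\le \tfrac{1}{2}\mathrm{Var}(p_{j})\,|\alpha_{j}|/b_{j}. \nonumber
\end{equation}
For $d$ sufficiently small the shift stays in $[-1,1]$ so the coupling is well defined, and we obtain $\mathbb{P}\{\zeta_{j}^{(2)}\ne\zeta_{j}^{(1)}+\alpha_{j}/b_{j}\}\le \tfrac{1}{2}\mathrm{Var}(p_{j})\,|\alpha_{j}|/b_{j}$.

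Taking the independent product over $j\le N$, the probability that any coordinate fails to couple is at most $\sum_{j=1}^{N}\tfrac{1}{2}\mathrm{Var}(p_{j})\,|\alpha_{j}|/b_{j}\le C\,d$, with $C=C(R,f,N)$. Define $V_{1,2}$ by these coordinate-wise couplings for $j\le N$ together with the shared kick for $j>N$; marginally each $V_{j}$ has law $\beta(1,u_{j},\cdot)$. On the success event, $P_{N}V_{1}=P_{N}V_{2}$ and by the first paragraph $\norm{V_{1}-V_{2}}_{H}\le d/4$, so in particular $\mathbb{P}\{\norm{V_{1}-V_{2}}_{H}\ge d/2\}\le C\,d$, as required. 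The main technical hurdle is the translation-total-variation estimate for $p_{j}$; its correct form and the requirement that $\alpha_{j}/b_{j}$ be an admissible shift are precisely why Condition \ref{mainkicksassumption} insists on bounded variation densities with full support near zero and $b_{j}>0$ on the relevant low modes. Smallness of $d$ (uniform in $u_{1},u_{2}\in B_{H}(R)$) is what lets us absorb all constants into a single $C$ independent of the initial points.
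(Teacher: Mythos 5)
Your construction is correct and is essentially the argument the paper relies on: the paper defers the proof to Lemma 3.2.6 of \cite{Kuksin7} (p.~118), which is exactly this splitting into low and high modes via Condition \ref{conditionforconvergence}, a shared high-frequency kick, a coordinatewise maximal coupling on the first $N$ modes, and the bounded-variation translation estimate for the densities $p_{j}$, with the Lipschitz bound \eqref{Condition1a} supplying the $O(d)$ size of the required shifts. No substantive difference from the paper's (referenced) proof.
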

Since the conditions on the deterministic solution operator are imposed on each fixed time interval and the operator is the same for each interval, the kicked-equations have the same form as the time-independent and zero-force cases. Thus the proof of Lemma \ref{thisisacouplinglemma}, which depends on Condition \ref{Condition1a}, is nearly identical to the one in \cite{Kuksin7}, p. 118, except that constants now depend on the norm of the deterministic force $f$ (\cite{Kuksin7}, p. 117). 

It should be noted that the choice of $N$ in Theorem \ref{CTheorem} comes from the construction of the coupling in Lemma \ref{thisisacouplinglemma} and the construction only needs that it is sufficiently large. 

\begin{remark}
In \cite{Shirikyan4} the complication in proving Lemma \ref{thisisacouplinglemma} lies not in the choice of a time-periodic deterministic force, but the choice of random perturbation.
\end{remark} 

Given Lemma \ref{thisisacouplinglemma}, the remainder of the proof proceeds under the following two cases:
\begin{enumerate}
	\item If the initial conditions satisfy $\norm{u-u'}\leq d$ for $d$ small enough then Lemma \ref{thisisacouplinglemma} establishes that there is a positive probability that the random variables after one time step will be within $\frac{d}{2}$. By iteration there is a positive probability that the random variables will be within $\frac{d}{2^{n}}$ after $n$ time steps.
	\item If the initial conditions satisfy $\norm{u-u'}>d$, then by Condition \ref{controllability} there is a finite time $l$ where $\norm{u^{l}(u)-u^{l}(u')}\leq d$. After this, Lemma \ref{thisisacouplinglemma} again implies that the distance between the random variables is continually halved with positive probability. 
\end{enumerate}

The above argument gives the main idea behind the following lemma (\cite{Kuksin}, Lemma 3.3) : 

\begin{lemma} Let $u_{1,2}\in A$, where $A$ is the invariant set, and $d=\norm{u_{1}-u_{2}}$. Then under the condition of Theorem \ref{CTheorem} for any $k\geq 1$ the measures $\mu_{u_{1,2}}(k)$ admit a coupling $U_{1,2}^{k}= U_{1,2}^{k}(u_{1},u_{2},\omega^{k}), \ \omega^{k}\in\Omega^{k}$ such that
	\begin{enumerate}
		\item The maps $U_{1,2}^{k}$ are measurable with respect to $\left(u_{1},u_{2},\omega^{k}\right)\in A^{2}\times\Omega^{k}$.
		\item There exists a constant $\theta>0$ not depending on $u_{1}, \ u_{2},$ and $k$ such that
		\begin{equation}
			\mathbb{P}^{k}\left\{\norm{U_{1}^{k}-U_{2}^{k}}\leq d_{r}\right\}\geq \theta, \forall k\geq r+l(d_{0}), \ u_{1},u_{2}\in A.
		\end{equation}
		\item If $\norm{u_{1}-u_{2}}\leq d_{r}$ then
			\begin{equation}
				\mathbb{P}^{k}\left\{\norm{U_{1}^{k}-U_{2}^{k}}\leq d_{k+r}\right\}\geq 1-2^{-r-1}, \ k\geq 1, \ r\geq 0.    \label{singlecouplingestimate}
			\end{equation}
		\end{enumerate}
\end{lemma}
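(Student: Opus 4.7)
The plan is to construct the coupling $(U_{1}^{k},U_{2}^{k})$ iteratively in time, using Lemma~\ref{thisisacouplinglemma} as the single-step building block whenever the two processes are sufficiently close, and using the trivial (same-noise) coupling otherwise. Fix $d_{0}>0$ small enough that Lemma~\ref{thisisacouplinglemma} applies whenever $\norm{u-u'}_{H}\leq d_{0}$, and so that its constant $C$ satisfies $4Cd_{0}\leq 1$; then set $d_{r}:=d_{0}\,2^{-r}$. I would build the coupling inductively, together with a random \emph{success level} $r_{j}\in\{0,1,\ldots\}\cup\{\infty\}$ recording that $\norm{U_{1}^{j}-U_{2}^{j}}_{H}\leq d_{r_{j}}$: on $\{r_{j}<\infty\}$, apply Lemma~\ref{thisisacouplinglemma} with parameter $d_{r_{j}}$ to the pair $(U_{1}^{j},U_{2}^{j})$ to obtain $(U_{1}^{j+1},U_{2}^{j+1})$, setting $r_{j+1}=r_{j}+1$ if $\norm{U_{1}^{j+1}-U_{2}^{j+1}}_{H}\leq d_{r_{j}+1}$ and $r_{j+1}=\infty$ otherwise; on $\{r_{j}=\infty\}$, simply use the same-noise coupling. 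Measurability in $(u_{1},u_{2},\omega^{k})$ is inherited at each step from Lemma~\ref{thisisacouplinglemma}, giving (1).

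For (3), the hypothesis $\norm{u_{1}-u_{2}}_{H}\leq d_{r}$ allows me to take $r_{0}=r$. By Lemma~\ref{thisisacouplinglemma}, conditional on $\{r_{j}=r+j\}$ the chance that $r_{j+1}=\infty$ is at most $Cd_{r+j}$. A union bound over $j=0,\ldots,k-1$ yields
\begin{equation*}
\mathbb{P}\{r_{k}=\infty\}\ \leq\ \sum_{j=0}^{k-1}Cd_{r+j}\ =\ Cd_{0}\sum_{j=0}^{k-1}2^{-r-j}\ \leq\ 2Cd_{0}\cdot 2^{-r}\ \leq\ 2^{-r-1},
\end{equation*}
so $\mathbb{P}\{\norm{U_{1}^{k}-U_{2}^{k}}_{H}\leq d_{k+r}\}\geq 1-2^{-r-1}$, which is exactly \eqref{singlecouplingestimate}.

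For (2), given arbitrary $u_{1},u_{2}\in A\subset B_{H}(R)$, I would use the same-noise coupling for the first $l:=l(d_{0},R)$ steps; Condition~\ref{controllability} then guarantees that the event $E_{0}=\{\norm{U_{1}^{l}-U_{2}^{l}}_{H}\leq d_{0}\}$ has probability at least $x:=x(d_{0})$. On $E_{0}$ I restart the halving construction above with initial level $0$ for the remaining $k-l$ steps. Since $k\geq r+l(d_{0})$, part~(3) applied with initial level $0$ gives that, conditional on $E_{0}$, one has $\norm{U_{1}^{k}-U_{2}^{k}}_{H}\leq d_{k-l}\leq d_{r}$ with conditional probability at least $1/2$. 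Combining,
\begin{equation*}
\mathbb{P}\bigl\{\norm{U_{1}^{k}-U_{2}^{k}}_{H}\leq d_{r}\bigr\}\ \geq\ \tfrac{x}{2}\ =:\ \theta,
\end{equation*}
which is independent of $u_{1},u_{2},k$ and proves (2).

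The main obstacle is the book-keeping required to verify that this inductive construction yields a genuine coupling of $\mu_{u_{1}}(k)$ and $\mu_{u_{2}}(k)$: the single-step coupling of Lemma~\ref{thisisacouplinglemma} must be extended, on its ``failure'' event, to a full coupling of the marginals so that the joint law over the \emph{entire} sample space agrees with $\beta(1,U_{1}^{j},\cdot)$ and $\beta(1,U_{2}^{j},\cdot)$ on the respective coordinates, and the same-noise segments (during the controllability window and after any failure) must be spliced in without disturbing these marginals. Once this is handled via the standard completion of a partial coupling by an independent or same-noise complement, the probability estimates above reduce to the geometric series calculation shown and yield all three claims simultaneously.
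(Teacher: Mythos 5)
Your construction is correct and is essentially the same argument the paper intends: the paper proves this lemma only by pointing to the two-case sketch preceding it (iterate Lemma \ref{thisisacouplinglemma} when the trajectories are within the threshold, and invoke Condition \ref{controllability} with the same-noise coupling to get them within that threshold otherwise) and deferring the bookkeeping to \cite{Kuksin} and \cite{Kuksin8}, which is exactly the scheme you carry out, including the geometric-series bound $\sum_j Cd_{r+j}\leq 2^{-r-1}$ for part (3) and the $\theta = x/2$ bound for part (2). Your closing remark about completing the partial one-step coupling to a genuine coupling of the full marginals is precisely the technical content the paper outsources to those references, so nothing essential is missing.
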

\noindent Due to Lemma \ref{thisisacouplinglemma}, the proof of this lemma is identical to the one in \cite{Kuksin} with the exception of using Condition \ref{controllability} here instead of Lemma 3.1 there.
The remainder of the proof follows identically to the argument in \cite{Kuksin8}.


Having established Theorem \ref{CTheorem}, it only remains to check that the conditions hold for the kicked Navier-Stokes equations. It is straightforward that Condition \ref{generalcontraction} implies Condition \ref{neededforabsorbingset}. Furthermore, since Conditions \ref{generalcontraction} and \ref{conditionforconvergence} are well known and analogous to results for bounded domains in $\R^{2}$ with smooth boundaries and periodic boundary conditions these are included in the appendix for completion. Instead only Condition \ref{controllability} is proved here. 

\subsection{Proof of Condition \ref{controllability}}

\noindent In order to establish Condition \ref{controllability} the following is needed (\cite{Kuksin2}, Lemma 5.4).

\begin{lemma}\label{arbitraryapproxofkicks} For any $\rho > 0$ and any integer $M\geq 1$, there is a $p_{0}=p_{0}(\rho,M)>0$ such that
    \begin{equation} \nonumber
        \mathbb{P}\left(\norm{\eta_{j}-x_{j}}_{H} <\rho, 1\leq j\leq M\right)\geq p_{0}
    \end{equation}
    uniformly in $x_{1},...,x_{M}\ in \ suppD(\eta)$ where $suppD(\eta)$ is the support of the distribution of the kicks.
\end{lemma}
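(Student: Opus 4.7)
My strategy is to truncate the Fourier expansion of the kick at a finite level, reduce the event to finitely many independent one-dimensional events, and then obtain a uniform lower bound on the joint probability via a compactness argument applied to the density.

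Fix $\rho > 0$ and $M \geq 1$, and consider arbitrary $x_1, \dots, x_M \in \mathrm{supp}\,D(\eta)$. Writing the kick as $\eta_j = \sum_{i=1}^{\infty} b_i \zeta_{ij} e_i$ (with $i$ denoting frequency and $j$ the time step, to avoid an index clash with the lemma), each target admits a representation $x_j = \sum_i b_i c_{ij} e_i$ with $c_{ij} \in \mathrm{supp}\,p_i \subset [-1,1]$ whenever $b_i > 0$; this follows from $x_j \in \mathrm{supp}\,D(\eta)$ together with the independence of the coordinates and the closedness of each $\mathrm{supp}\,p_i$. Since $B_0 = \sum_i b_i^2 < \infty$, I would first choose $N = N(\rho)$ so large that $\sum_{i > N} 4 b_i^2 < \rho^2/2$. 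Because $|\zeta_{ij} - c_{ij}| \leq 2$ pointwise, the tail contribution to $\|\eta_j - x_j\|_H^2$ is then bounded deterministically by $\rho^2/2$, so it suffices to control the first $N$ modes.

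Set $B := 1 \vee \max_{i \leq N} b_i$, $\delta := \rho/(2\sqrt{N}\,B)$, and consider the event
\begin{equation}
E := \bigcap_{j=1}^{M} \bigcap_{\substack{1 \leq i \leq N \\ b_i > 0}} \bigl\{|\zeta_{ij} - c_{ij}| < \delta \bigr\}. \nonumber
\end{equation}
On $E$, the head satisfies $\sum_{i \leq N}(b_i \zeta_{ij} - b_i c_{ij})^2 < N B^2 \delta^2 = \rho^2/4$; combined with the deterministic tail bound this gives $\|\eta_j - x_j\|_H < \rho$ for each $j$. By the independence of $\{\zeta_{ij}\}$ assumed in Condition \ref{mainkicksassumption}, $\mathbb{P}(E)$ factorizes as a finite product of one-dimensional probabilities of the form $\int_{c_{ij} - \delta}^{c_{ij} + \delta} p_i(r)\,dr$.

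For uniformity in the $x_j$, fix $i$ and observe that the map $c \mapsto \int_{c - \delta}^{c + \delta} p_i(r)\,dr$ is continuous on $\mathbb{R}$ by absolute continuity of the Lebesgue integral, and strictly positive on $\mathrm{supp}\,p_i$ by the very definition of support. Since $\mathrm{supp}\,p_i \subset [-1,1]$ is compact, this function attains a positive minimum $q_i(\delta) > 0$ over $\mathrm{supp}\,p_i$. Multiplying over the finitely many pairs $(i,j)$ with $b_i > 0$ yields
\begin{equation}
\mathbb{P}\bigl(\|\eta_j - x_j\|_H < \rho,\ 1 \leq j \leq M\bigr) \geq \prod_{j=1}^{M} \prod_{\substack{i \leq N \\ b_i > 0}} q_i(\delta) =: p_0(\rho, M) > 0, \nonumber
\end{equation}
independently of the choice of the $x_j$. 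The principal subtlety is precisely this uniform lower bound $q_i(\delta) > 0$, which requires both compactness of $\mathrm{supp}\,p_i$ and the continuity ensured by $p_i \in L^1$; the remainder is a routine truncation-plus-independence computation.
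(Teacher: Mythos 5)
Your proof is correct and is essentially the standard argument for this lemma: the paper itself gives no proof, citing it as Lemma 5.4 of \cite{Kuksin2}, and your truncation of the series using $\sum_i b_i^2 < \infty$, followed by factorization over the finitely many independent coordinates and a compactness argument giving a uniform positive lower bound for $c\mapsto\int_{c-\delta}^{c+\delta}p_i(r)\,dr$ on $\mathrm{supp}\,p_i$, is exactly the route taken there. No gaps.
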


\noindent The proof of Condition \eqref{controllability} uses the main idea behind Lemma 3.1 in \cite{Kuksin} and is split into the two cases considered.

\begin{lemma} Suppose that there exists an asymptotically stable solution, then for any $d>0$ and $R>0$ there exists integer $l=l(d,R)>0$ and real number $x=x(d)>0$ such that
        \begin{equation} \nonumber
 \mathbb{P}\left\{\|u^{l}(v_{0})-u^{l}(w_{0})\|_{H}\leq d\right\} \geq x, \ \text{for \ all} \ v_{0},w_{0}\in B_{H}(R).
        \end{equation}
\end{lemma}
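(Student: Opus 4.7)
The plan is to exploit the deterministic contraction from asymptotic stability together with Lemma \ref{arbitraryapproxofkicks}, which says the kicks on any finite window can be forced arbitrarily close to $0$ with strictly positive probability. Setting $\phi_k^{u_0}:=u^k(u_0)-S_k u_0$ for the deviation of the kicked trajectory from the deterministic trajectory with the same start, the triangle inequality
\begin{equation}
\norm{u^l(v_0)-u^l(w_0)}_H \leq \norm{\phi_l^{v_0}}_H + \norm{S_l v_0 - S_l w_0}_H + \norm{\phi_l^{w_0}}_H \nonumber
\end{equation}
reduces the proof to making each of the three terms at most $d/3$.

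I would first use asymptotic stability to fix $l=l(d,R)$ so that $\norm{S_lv_0-S_lw_0}_H \leq 2RC(R)q^l \leq d/3$. Next, from $\phi_0^{v_0}=0$ and the recursion $\phi_k^{v_0} = [Su^{k-1}(v_0)-SS_{k-1}v_0] + \eta_k$, combined with the one-period estimate $\norm{Sx-Sy}_H \leq C(R'')q\norm{x-y}_H$ valid on a suitable ball $B_H(R'')$ containing both trajectories, I obtain $\norm{\phi_k^{v_0}}_H \leq C(R'')q\norm{\phi_{k-1}^{v_0}}_H + \norm{\eta_k}_H$. If $\norm{\eta_k}_H \leq \varepsilon$ for $1\leq k\leq l$, iteration yields $\norm{\phi_l^{v_0}}_H \leq \varepsilon M(l)$ with $M(l)=\sum_{j=0}^{l-1}(C(R'')q)^j < \infty$, and the same bound holds for $\phi_l^{w_0}$. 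Choosing $\varepsilon=\varepsilon(d,l,R)$ with $\varepsilon M(l)\leq d/3$, and noting that $0$ lies in the support of each $\eta_k$ by Condition \ref{mainkicksassumption} (the density $p_j$ puts positive mass on every neighborhood of $0$), Lemma \ref{arbitraryapproxofkicks} applied with $x_j=0$ gives $\mathbb{P}\{\norm{\eta_k}_H\leq\varepsilon,\,1\leq k\leq l\}\geq p_0(\varepsilon,l)>0$. On this event all three terms above are at most $d/3$, so $\norm{u^l(v_0)-u^l(w_0)}_H\leq d$, and setting $x(d):=p_0(\varepsilon,l)$ finishes the argument.

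The main bookkeeping obstacle is producing a single radius $R''$, uniform in $l$, such that both $u^k(v_0)$ and $S_k v_0$ lie in $B_H(R'')$ for every $k\leq l$, since otherwise $C(R'')$ would grow with $l$ and the linear recursion above would be useless. This uses Condition \ref{neededforabsorbingset} applied to $B=B_H(R)$, the uniform kick bound $\norm{\eta_k}_H\leq B_0^{1/2}$ from Condition \ref{mainkicksassumption}, and the deterministic absorbing set of the Navier-Stokes flow; together these yield a fixed $R''$ depending only on $R$, $B_0$, and $\norm{f}_{L^\infty(0,\infty;H)}$. With $R''$ fixed, the constants $C(R'')$, $l$, $\varepsilon$, and $M(l)$ all depend only on $d$, $R$, and the data of the problem, completing the plan.
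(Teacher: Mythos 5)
Your proposal is correct and follows essentially the same route as the paper: fix all kicks near zero, let the deterministic contraction from asymptotic stability bring the two trajectories within the target distance in a finite time $l$, and invoke Lemma \ref{arbitraryapproxofkicks} (with $x_j=0$, which lies in the support of the kick distribution) to give this event positive probability. The only difference is that you carry out explicitly, via the triangle-inequality decomposition and the one-step recursion for $\phi_k$ on a uniformly bounded ball, the stability-under-small-kicks estimate that the paper compresses into the single phrase ``by continuity of the flow there is a $\gamma>0$ small enough.''
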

\begin{proof} 

First fix all realization of the kicks as the zero realization. Then by assumption there exists a time $l$ such that
\begin{equation}
    \norm{u^{l}(w_{0})-u^{l}(v_{0})}_{H}\leq \dfrac{d}{2} \ \forall \ w_{0},v_{0}\in B_{H}(R).
\end{equation}
By continuity of the flow there is a $\gamma>0$ small enough that if $\norm{\eta_{k}}\leq \gamma$ for $1\leq k \leq l$ then
\begin{equation}
    \norm{u^{l}(w_{0})-u^{l}(v_{0})}_{H}\leq  d.
\end{equation}
By Lemma \ref{arbitraryapproxofkicks} the probability of $\norm{\eta_{k}}\leq \gamma$ is nonzero.
Thus
\begin{equation}
    \mathbb{P}\left\{\norm{u^{l}(w_{0})-u^{l}(v_{0})}_{H}\leq d, \ \forall w_{0},v_{0}\in B_{H}(R)\right\} \geq x.
\end{equation}
\end{proof}

Recall that the $N$ in Theorem \ref{CTheorem} is from the construction of the coupling in Lemma \ref{thisisacouplinglemma}. Let $N'$ be the maximum of the $N$ from the big kick assumption (and thus $\geq M$) and the $N$ generated by Lemma \ref{thisisacouplinglemma}.

\begin{lemma} Let $b_{j}>0$ for $1\leq j \leq N'$. Suppose that there exists a finitely stable point $u$ and assume that the big kick assumption holds, then for any $d>0$ and $R>0$ there exists an integer $l=l(d,R)>0$ and real number $x=x(d)>0$ such that
        \begin{equation} \nonumber
 \mathbb{P}\left\{\|u^{l}(v_{0})-u^{l}(w_{0})\|_{H}\leq d\right\} \geq x, \ \text{for \ all} \ v_{0},w_{0}\in B_{H}(R).
        \end{equation}
\end{lemma}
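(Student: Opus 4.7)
The plan is to mimic the asymptotically-stable case proof but replace the (unavailable) global contraction by a single big kick that drives both flows simultaneously into the stability region of the finitely-stable point $u$; finite stability then takes over. Specifically: (i) Condition \ref{neededforabsorbingset} ensures both flows lie in $B_{H}(\rho)$ after some deterministic number $k_{0}$ of steps. (ii) At step $k_{0}+1$, condition on $\eta_{k_{0}+1}$ being within $\rho'$ (in $H$-norm) of the ``midpoint'' target
\[
\eta^{*}:=\sum_{j=1}^{M}\Bigl(u_{j}-\tfrac{1}{2}(Su^{k_{0}}(v_{0}))_{j}-\tfrac{1}{2}(Su^{k_{0}}(w_{0}))_{j}\Bigr)e_{j}.
\]
The $V$-bound on the absorbing ball gives $|u_{j}|,|(Su^{k_{0}}(\cdot))_{j}|\leq D/\lambda_{j}^{1/2}$, so $|(\eta^{*})_{j}|\leq 2D/\lambda_{j}^{1/2}\leq b_{j}$ for $2\leq j\leq M$ and $|(\eta^{*})_{1}|\leq 2D\leq b_{1}$, placing $\eta^{*}\in suppD(\eta)$. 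By Lemma \ref{arbitraryapproxofkicks} this event has probability $\geq p_{0}(\rho',M)>0$ uniformly in $(v_{0},w_{0})$.

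On the conditioned event one checks that
\[
\max\bigl\{\|P_{M}u^{k_{0}+1}(v_{0})-P_{M}u\|_{H},\,\|P_{M}u^{k_{0}+1}(w_{0})-P_{M}u\|_{H}\bigr\}\leq\tfrac{1}{2}\|P_{M}(Su^{k_{0}}(v_{0})-Su^{k_{0}}(w_{0}))\|_{H}+\rho',
\]
and the right-hand side is at most $\delta$ because the $V$-absorbing ball has $P_{M}$-diameter at most $2D\sqrt{\sum_{j\leq M}\lambda_{j}^{-1}}$, which combined with the relationship between $M$ and $\delta$ in the finitely-stable setup on $S^{2}$ does not exceed $2\delta$. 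Once both projections lie in $B_{H}(P_{M}u,\delta)$, Definition \ref{finitelystable} yields $T=T(d,R)$ with $\|S_{T}u^{k_{0}+1}(v_{0})-u_{T}\|_{H},\|S_{T}u^{k_{0}+1}(w_{0})-u_{T}\|_{H}\leq d/4$. Finally, condition on $\|\eta_{k_{0}+1+i}\|_{H}<\gamma$ for $1\leq i\leq T$ (positive probability by Lemma \ref{arbitraryapproxofkicks}) and use continuous dependence of the NS flow on its data to conclude $\|u^{l}(v_{0})-u^{l}(w_{0})\|_{H}\leq d$ for $l:=k_{0}+1+T$, with probability at least some $x(d)>0$.

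The hard part is step (ii): the common kick cannot change the difference $P_{M}(Su^{k_{0}}(v_{0})-Su^{k_{0}}(w_{0}))$, so $\eta^{*}$ is the best any single common kick can do, and the whole scheme requires the $P_{M}$-spread of the $V$-absorbing ball to fit inside the finite-stability ball of radius $2\delta$ around $P_{M}u$. This is precisely the content of the big-kick inequalities $b_{j}\geq 2D/\lambda_{j}^{1/2}$: using $|u_{j}|,|(Su^{k_{0}}(\cdot))_{j}|\leq D/\lambda_{j}^{1/2}$ bounds each coordinate of the midpoint kick by $b_{j}$ and, together with the summability of $\sum_{j\leq M}\lambda_{j}^{-1}$ on $S^{2}$, also bounds the $P_{M}$-spread. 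Uniformity of the lower bound on the probability over $v_{0},w_{0}\in B_{H}(R)$ follows from the uniform statement in Lemma \ref{arbitraryapproxofkicks}; if the above compatibility ever failed, step (ii) would have to be replaced by an iterative scheme alternating big kicks targeted at $v_{0}$ and at $w_{0}$ between waiting periods in which finite stability partially contracts $v$'s trajectory toward $u_{t}$.
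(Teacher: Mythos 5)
Your proposal follows the same skeleton as the paper's proof: drive both trajectories into the absorbing ball with (near-)zero kicks, use one big kick to place the $P_{M}$-projections inside the stability ball of the finitely stable point $u$, let finite stability contract the trajectories, and then convert the whole fixed scenario into an event of positive probability via Lemma \ref{arbitraryapproxofkicks} and continuous dependence of the flow on its data. The paper's version is slightly simpler at the front end (it uses \eqref{Hcontraction} with the zero realization of the kicks rather than Condition \ref{neededforabsorbingset}), but these are interchangeable.

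The place where you diverge is exactly the step you flag as ``the hard part,'' and your instinct there is correct: since both trajectories receive the \emph{same} kick, no single kick can move both $P_{M}Su^{k_{0}}(v_{0})$ and $P_{M}Su^{k_{0}}(w_{0})$ to within $\delta$ of $P_{M}u$ unless those projections are already within roughly $2\delta$ of each other. Your midpoint kick $\eta^{*}$ is a sensible device and your verification that $\eta^{*}$ lies in the support via $\lvert(\eta^{*})_{j}\rvert\leq 2D/\lambda_{j}^{1/2}\leq b_{j}$ is sound, but the closing claim --- that the $P_{M}$-spread of the absorbing set ``combined with the relationship between $M$ and $\delta$ in the finitely-stable setup'' does not exceed $2\delta$ --- has no basis in the paper: Definition \ref{finitelystable} imposes no quantitative link between $M$, $\delta$, and $D(f)$, and $D\bigl(\sum_{j\leq M}\lambda_{j}^{-1}\bigr)^{1/2}$ can certainly exceed $\delta$. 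So as written your step (ii) is a genuine gap, and the fallback you sketch (alternating kicks targeted at each trajectory) is not substantiated. You should know, however, that the paper's own proof contains the identical unaddressed difficulty: it asserts the existence of a single kick $\eta'$ with $\norm{P_{M}u-P_{M}(u^{l}(w)+\eta')}_{H}\leq\delta/2$ \emph{for all} $w\in B_{H}(R)$ simultaneously, which fails for the same reason, namely that a common additive kick cannot change the difference of the projections. In other words, you have not introduced a new error so much as surfaced one the paper glosses over; closing it would require either an additional hypothesis tying $\delta$ to $D(f)$ and $M$, or an argument that the coupled trajectories' $P_{M}$-projections can first be brought within $2\delta$ of each other.
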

\begin{proof}
Let $\delta$ be the radius for the finitely stable point, $u$, and fix all realizations of the kicks as the zero realization. By (\ref{Hcontraction}) there exists a time $l$ such that  
\begin{equation}
	\norm{u^{l}(w)}_{H}\leq \dfrac{\delta}{4} +D(f)\ \forall \ w\in B_{H}(R).
\end{equation}
By the big kick assumption, there exists a kick $\eta'$ such that
\begin{equation}
	\norm{P_{M}u- P_{M}(u^{l}(w)+\eta')}_{H}\leq \dfrac{\delta}{2} \ \forall \ w\in B_{H}(R).
\end{equation}
Again fix all realizations as the the zero realization. By the assumption of a finitely stable point, there exists a time $k$ such that
\begin{equation}
	\norm{u^{l+k+1}(w)-u^{k}(u)}_{H}\leq \dfrac{d}{4}\ \forall \ w\in B_{H}(R).
\end{equation}
Thus there exists a time $k$ such that
\begin{equation}
	\norm{u^{l+k+1}(w_{0})-u^{l+k+1}(v_{0})}_{H}\leq \dfrac{d}{2}\ \forall \ w_{0},v_{0}\in B_{H}(R).
\end{equation}
By continuity, $\gamma>0$ can be chosen such that if $\norm{\eta_{j}}\leq \gamma$ for $1\leq j \leq l$, $\norm{\eta'-\zeta}\leq \gamma$ where $\zeta$ is another realization of the kick, and $\norm{\eta_{j}}\leq \gamma$ for $l+1< j\leq l+k+1$ then

\begin{equation}
		\norm{u^{l+k+1}(w_{0})-u^{l+k+1}(v_{0})}_{H}\leq d\ \forall \ w_{0},v_{0}\in B_{H}(R).
\end{equation}
By Lemma \ref{arbitraryapproxofkicks} there is a positive probability of the kicks satisfying the inequalities. 
\end{proof}

\noindent This completes the proof of Condition \ref{controllability} and thus there is uniqueness of invariant measure in $H$. 

\section{Support of the Measure}

Before stating the main result of this section, we recall some definitions and straightforward results about the support of a measure.

\begin{definition} The support of a measure $\mu$ on H is the smallest closed subset $K$ in $H$ such that $\mu(H/K)=0$. A measure is concentrated on a set $B$ if $\mu(B)= 1.$
\end{definition}

\noindent To continue we need the following results from \cite{Kuksin2}, Lemmas 5.4 and 5.5. The first is a restatement of Lemma \ref{arbitraryapproxofkicks}.

\begin{lemma}\label{arbitraryapproxofkicks2} A sequence of realizations of kicks can be taken arbitrarily close to any prescribed sequence of vectors in $supp D(\eta)$ with positive probability. 
\end{lemma}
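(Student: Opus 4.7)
The plan is to observe that, as the preceding sentence in the paper already asserts, this statement is a verbal rephrasing of Lemma \ref{arbitraryapproxofkicks}: for any prescribed finite initial segment $x_1,\dots,x_M\in\mathrm{supp}\,D(\eta)$ and any accuracy $\rho>0$, Lemma \ref{arbitraryapproxofkicks} supplies a strictly positive $p_0(\rho,M)$ with $\mathbb{P}(\norm{\eta_j-x_j}_H<\rho,\ 1\leq j\leq M)\geq p_0$. Interpreting ``arbitrarily close'' as the accuracy $\rho$ and ``with positive probability'' as the lower bound $p_0$, there is nothing further to establish; the proof reduces to invoking Lemma \ref{arbitraryapproxofkicks}.

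For the reader's convenience I would also recall the argument underlying Lemma \ref{arbitraryapproxofkicks} itself, since it is the content really being used here. Expand $\eta_k=\sum_j b_j\zeta_{jk}e_j$ and each target as $x_k=\sum_j x_k^{(j)}e_j$. Since $B_0=\sum_j b_j^2<\infty$, choose $N=N(\rho)$ so large that $2\bigl(\sum_{j>N}b_j^2\bigr)^{1/2}<\rho/2$; using $\abs{\zeta_{jk}}\leq 1$ together with $\abs{x_k^{(j)}}\leq b_j$, the deterministic tail bound $\norm{(I-P_N)(\eta_k-x_k)}_H<\rho/2$ then holds for every realization. Membership $x_k\in\mathrm{supp}\,D(\eta)$ forces $x_k^{(j)}/b_j\in\mathrm{supp}(p_j)$ for each $j\leq N$ with $b_j>0$, so by the definition of the support of a measure, $\mathbb{P}(\abs{\zeta_{jk}-x_k^{(j)}/b_j}<\epsilon)>0$ for every $\epsilon>0$. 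Choosing $\epsilon$ small enough that these coordinatewise estimates imply $\sum_{j\leq N}b_j^2(\zeta_{jk}-x_k^{(j)}/b_j)^2<\rho^2/4$, and invoking mutual independence of $\{\zeta_{jk}\}$ over the finite index set $1\leq j\leq N,\ 1\leq k\leq M$, produces a finite product of strictly positive factors, bounded below uniformly in the targets $(x_k)$.

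The only genuinely conceptual point --- and the main obstacle to writing this out carefully --- is the identification of $\mathrm{supp}\,D(\eta)$ with the closure of $\bigl\{\sum_j u_j e_j:u_j/b_j\in\mathrm{supp}(p_j)\text{ when }b_j>0,\ u_j=0\text{ otherwise}\bigr\}$. This is the standard fact that the support of an infinite product of probability measures is the closure of the product of the supports of its factors, applied to the law of $\{\zeta_{jk}\}_j$ in sequence space, together with the $\ell^2$-summability already guaranteed by $B_0<\infty$. Once this identification is in hand, combining the deterministic tail bound with the low-mode positive-probability estimate yields the required $p_0(\rho,M)>0$ and hence the stated approximation property.
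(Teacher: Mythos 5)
Your proposal is correct and takes the same route as the paper: the paper offers no independent proof of this lemma, stating only that it is a restatement of Lemma \ref{arbitraryapproxofkicks} (citing Kuksin--Shirikyan), which is exactly your reduction. Your supplementary sketch of the underlying argument (tail truncation via $B_{0}<\infty$ plus coordinatewise positivity on the finitely many low modes and independence) is sound and goes beyond what the paper records, though for the claimed uniformity in the targets one should add that $c\mapsto\int_{c-\epsilon}^{c+\epsilon}p_{j}(r)\,dr$ is continuous and positive on the compact set $\mathrm{supp}(p_{j})\subset[-1,1]$, hence bounded below there.
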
 

\begin{definition} For $y\in H$, let $A_{0}(y)= y$. Let $A_{n}(y)= S(A_{n-1}(y))+supp D(\eta_{k})$, called the set of attainability from the set $\left\{y\right\}$ at time n. The set of attainability from $y$ is defined as
\begin{equation} \nonumber
	A(y) = \overline{\bigcup_{i=0}^{\infty}A_{i}(y)}.  \label{setofattainability}
\end{equation}
\end{definition}

\begin{lemma}\label{approxinfinitetime} For any $r>0$ there is an integer $k\geq 0$ such that $A(y)$ is contained in the r-neighborhood of $A_{k}(y)$, i.e. for any $a\in A(y)$ there exists $a_{k}\in A_{k}(y)$ such that $a_{k}\in B_{H}(r,a)$, where $B_{H}(r,a)$ is the ball of radius $r$ in $H$ centered at $a$. 
\end{lemma}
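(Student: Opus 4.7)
The plan is to combine compactness of $A(y)$ in $H$ with a finite-cover argument and Lemma~\ref{arbitraryapproxofkicks2} to propagate approximations forward in time. First I would establish that $A(y)$ is compact in $H$. The kick support $K=\{u=\sum_j u_j e_j : |u_j|\leq b_j\}$ is a Hilbert cube, hence compact in $H$ since $B_0=\sum_j b_j^2<\infty$ by Condition~\ref{mainkicksassumption}. By Theorem~\ref{existenceofprojectedsolution}, the deterministic one-period operator $S$ sends bounded sets of $H$ into bounded sets of $V$, and since $V$ embeds compactly into $H$, $S$ is a compact map on $H$. Condition~\ref{neededforabsorbingset} supplies $k_0$ with $A_k(y)\subset B_H(\rho)$ for $k\geq k_0$, whence $A_k(y)\subset S(B_H(\rho))+K$ for $k\geq k_0+1$, and this latter set has compact closure in $H$. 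Each earlier set $A_0(y),\ldots,A_{k_0}(y)$ is itself compact by induction (from $A_0=\{y\}$ and $A_{j+1}=S(A_j)+K$ with $S$ continuous and $K$ compact). Thus $\bigcup_{k\geq 0}A_k(y)$ lies in a compact subset of $H$, and $A(y)$ is compact.

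Next, by compactness I would cover $A(y)$ by finitely many balls $B_H(a_i,r/3)$, $i=1,\ldots,N$, with $a_i\in A(y)$. Density of $\bigcup_k A_k(y)$ in $A(y)$ supplies, for each $i$, an index $m_i$ and a point $b_i\in A_{m_i}(y)$ with $\norm{a_i-b_i}_H<r/3$. For a common $k\geq \max_i m_i$ chosen sufficiently large, I would use Lemma~\ref{arbitraryapproxofkicks2} to select kicks at times $m_i+1,\ldots,k$ close to a carefully prescribed sequence, so that, exploiting uniform Lipschitz continuity of $S$ on the compact region where all $A_j(y)$ live, the trajectory passing through $b_i$ at time $m_i$ arrives within $r/3$ of $b_i$ at time $k$, producing a point $c_i\in A_k(y)$ with $\norm{b_i-c_i}_H<r/3$. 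By the triangle inequality, for any $a\in A(y)$ pick $i$ with $\norm{a-a_i}_H<r/3$; then $\norm{a-c_i}_H<r$ with $c_i\in A_k(y)$, yielding $A(y)\subset B_H(r,A_k(y))$.

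The main obstacle is the propagation step: showing that a state $b$ attained at an early time $m_i$ admits approximation by a point of $A_k(y)$ for a uniformly chosen later $k$. This is where the dissipative structure of $S$ on the absorbing ball, combined with the controllability provided by Lemma~\ref{arbitraryapproxofkicks2}, must be exploited together; the finite-net construction of the second paragraph provides the uniformity that makes a single $k$ suffice.
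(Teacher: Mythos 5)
Your compactness argument is sound: the Hilbert brick $K$ is compact in $H$ because $\sum_j b_j^2<\infty$, the time-one map $S$ is compact on $H$ by the smoothing estimate \eqref{VtoHcontraction} combined with the compact embedding of $V$ into $H$, each $A_k(y)$ is then compact by induction, and Condition \ref{neededforabsorbingset} confines the tail of the sequence to the fixed precompact set $S(B_H(\rho))+K$; hence $A(y)$ is compact and your finite $r/3$-net exists. (Note that the paper itself offers no proof of Lemma \ref{approxinfinitetime}; it is quoted from \cite{Kuksin2}, Lemma 5.5, so there is no in-paper argument to compare against.)

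The gap is the propagation step, which you correctly flag as the main obstacle but do not close, and which is in fact the entire content of the lemma. There are two problems. First, Lemma \ref{arbitraryapproxofkicks2} is the wrong tool: it asserts that a realization of the kicks can be made close to any \emph{prescribed sequence of vectors in} $\mathrm{supp}\,D(\eta)$ with positive probability; it says nothing about the existence of a sequence of kicks in the support that steers the trajectory back to a prescribed \emph{state} $b_i$. Moreover Lemma \ref{approxinfinitetime} is a purely deterministic statement about the sets $A_k(y)$, so no probabilistic selection is relevant; what must be produced is an admissible kick sequence achieving the return, and that existence is exactly what is left unproved. Second, the return claim is false for a general base point: if $S$ contracts the absorbing ball to a point $z^{*}$ far from $b_i$ and the brick $K$ is small (or vanishes in the high modes, as the hypotheses permit for $j>N$), then every trajectory through $b_i$ lies near $z^{*}+K$ at all large times, and no $c_i\in A_k(y)$ comes within $r/3$ of $b_i$. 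The standard way to close the argument (and the way it works in \cite{Kuksin2}, where the base point is the stationary solution) is to observe that whenever $y\in S(y)+K$ --- for instance $y$ a fixed point of $S$, since $0\in K$ --- one has $A_0(y)\subset A_1(y)$, and applying $S(\cdot)+K$ repeatedly gives $A_m(y)\subset A_{m+1}(y)$ for all $m$; then each $b_i\in A_{m_i}(y)\subset A_k(y)$ with $k=\max_i m_i$, and your finite-net argument finishes with no propagation step at all. Without such monotonicity, or a genuine recurrence hypothesis on $y$ of the kind the paper later exploits for nonwandering points, the argument as you have sketched it does not go through.
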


\noindent The definition of the set of attainability is similar to Condition \ref{neededforabsorbingset} except that the ball is centered at $y$ instead of 0. 

\begin{remark}
The support of the measure for the Navier-Stokes equations is concentrated on $V$ (\cite{Varner}, Lemma 5.5.2) and, in general, the support of the measure is contained in a ball centered around the origin of radius the square root of
\begin{equation}
	\left(\dfrac{\norm{f}_{L^{\infty}(0,\infty;H)}^{2}}{\nu^{2}\lambda_{1}}+B_{0}\right)
	\dfrac{1}{1-e^{-\lambda_{1}\nu}},   \label{sizeofabsorbingball}
\end{equation}
where $\norm{\eta_{k}}^{2}\leq B_{0}$ for all k (\cite{Varner}, Lemma 5.5.3). 

\noindent When there is an asymptotically stable solution the support is contained in a ball of radius
\begin{equation}
	\frac{\sqrt{B_{0}}}{1-e^{-L}}
\end{equation}
centered at the asymptotically attracting solution (\cite{Varner}, Lemma 5.2.1), where $L$ is the rate of convergence ($q^{t}=e^{-L}$ for $0<q<1$).
\end{remark}

\subsection{Support of the Measure} 

We next extend the standard definitions of wandering and nonwandering points (\cite{Dymnikov2}, page 27) to the case of stochastic flow.

\begin{definition} Let $U_{\epsilon}(p) = B_{H}(\epsilon,p)-\left\{p\right\}.$
  A point $p$ in H is nonwandering if for all $\epsilon>0$ and for every $T>0$ there exists $t>T$ such that
  \begin{equation}\nonumber
  	B_{H}(\epsilon,p)\cap S_{t}(U_{\epsilon}(p)) \neq \oslash.
  \end{equation}
\end{definition}

\begin{definition} 
  A point $p$ in H is wandering if there exists $\epsilon>0$ and there exists $T>0$ such that for all $t>T$
  \begin{equation}\nonumber
  	B_{H}(\epsilon,p)\cap S_{t}(U_{\epsilon}(p)) = \oslash.  \label{defwandering}
  \end{equation}
\end{definition}

A point is defined as wandering or nonwandering based on the behavior of nearby points. One consequence of this is that an unstable point will behave like a wandering point (in the sense of \cite{Dymnikov2}, page 27). For example, if the force is stationary, an unstable stationary solution would now behave as a wandering point.

The following result was proved in \cite{Varner}, Theorem 5.5.8. 
\begin{theorem}
	Let $A$ be the set of attainability from the the set of nonwandering points. Then any $a\in A$ is in the support of the measure.
\end{theorem}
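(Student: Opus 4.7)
The plan is to split the argument into two parts: showing that the support is closed under the attainability operation (so that $y \in \mathrm{supp}(\mu)$ implies $A(y) \subset \mathrm{supp}(\mu)$), and showing that every nonwandering point itself lies in $\mathrm{supp}(\mu)$. Together these give the theorem, because the set $A$ of points attainable from the nonwandering set is then contained in $\mathrm{supp}(\mu)$.

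For the first part I would fix $y \in \mathrm{supp}(\mu)$, $a \in A(y)$, and $\epsilon > 0$, invoke Lemma \ref{approxinfinitetime} to replace $a$ by some $a_k \in A_k(y)$ within distance $\epsilon/2$, and unfold the definition of $A_k$ to write $a_k = \Phi_k(y; x_1, \ldots, x_k)$ for an explicit sequence $x_j \in \mathrm{supp}(D(\eta))$. Condition \ref{Condition1a} gives the Lipschitz estimate for $S$ on bounded balls, so $\Phi_k$ is jointly continuous, and there is $\delta > 0$ such that whenever $v \in B_H(\delta, y)$ and $\norm{\eta_j - x_j}_H < \delta$ for every $1 \leq j \leq k$ one has $u^k(v) \in B_H(\epsilon, a)$. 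Lemma \ref{arbitraryapproxofkicks2} assigns probability at least some $p_0 > 0$ to this kick event, so $\beta(k, v, B_H(\epsilon, a)) \geq p_0$ uniformly in $v \in B_H(\delta, y)$; invariance $\mu = \beta_k^{*}\mu$ then yields
\[
\mu(B_H(\epsilon, a)) \geq p_0\,\mu(B_H(\delta, y)) > 0,
\]
because $y \in \mathrm{supp}(\mu)$. Since $\epsilon > 0$ was arbitrary, $a \in \mathrm{supp}(\mu)$.

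For the second part, which I expect to be the main obstacle, let $p$ be nonwandering and $\epsilon > 0$. The strategy is to construct a finite deterministic pseudo-orbit ending within $\epsilon$ of $p$, start it from an already-known support point $v_{\ast} \in \mathrm{supp}(\mu)$ (which exists because $\mu$ is a probability measure), and then realize this pseudo-orbit by the stochastic flow. Concretely: iterate $S$ from $v_{\ast}$ until the trajectory enters the absorbing ball $B_H(D(f))$; apply the nonwandering definition to select $q \in U_{\epsilon}(p)$ and $t > 0$ with $S_t(q) \in B_H(\epsilon, p)$; a single bridging kick of displacement $q - S_{n_0}(v_{\ast})$ connects the trajectory to $q$, and this displacement lies in $\mathrm{supp}(D(\eta))$ because both endpoints sit in the absorbing ball and the kick support is large enough under either the big-kick assumption or the small-force hypothesis; finally small subsequent kicks let the stochastic trajectory shadow the arc $\{S_s(q)\}_{s \in [0,t]}$ up to a point of $B_H(2\epsilon, p)$. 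Lemma \ref{arbitraryapproxofkicks2} assigns positive probability to this entire realization, and the first-part argument then places a point within $2\epsilon$ of $p$ inside $\mathrm{supp}(\mu)$; since $\epsilon > 0$ was arbitrary and $\mathrm{supp}(\mu)$ is closed, $p \in \mathrm{supp}(\mu)$.

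The main technical difficulty is making this construction quantitative: the return time $t$ furnished by the nonwandering definition can be large, and the pseudo-orbit length must be fixed \emph{before} invoking Lemma \ref{arbitraryapproxofkicks2} so that the resulting probability $p_0$ is strictly positive. The Lipschitz continuity of $S$ on bounded balls (Condition \ref{Condition1a}) supplies the needed stage-by-stage continuity, and the richness of the kick support (Condition \ref{mainkicksassumption}, supplemented by the big-kick assumption in the finitely stable case) ensures every bridging displacement is approximable by an actual kick. Once those estimates are in place the two parts combine: every nonwandering point sits in $\mathrm{supp}(\mu)$, the attainability set of each such point sits there by the first part, and the union over the nonwandering set is $A \subset \mathrm{supp}(\mu)$, proving the theorem.
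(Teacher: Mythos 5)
Your first part (propagation of the support along the attainability set) is sound and is essentially the paper's own argument: it is the combination of the invariance identity $\mu=\beta_k^{*}\mu$, Lemma \ref{approxinfinitetime}, the Lipschitz bound \eqref{Condition1a}, and Lemma \ref{arbitraryapproxofkicks2}, exactly as in the paper's second lemma. The genuine gap is in your second part, the claim that every nonwandering point lies in $\mathrm{supp}(\mu)$ via a single ``bridging kick'' of displacement $q-S_{n_0}(v_{\ast})$. That displacement is a generic element of $H$, and it is not in the kick support $K=\{u:\abs{u_j}\leq b_j\}$ in either regime of the Main Theorem. Under the asymptotically-stable alternative the theorem only requires $b_j>0$ for $j\leq N$, so all kicks may be arbitrarily small and no bridging is possible at all. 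Under the big kick assumption the lower bounds on $b_j$ are only imposed for $j\leq M$ (with $b_j>0$ up to $N$ and no lower bound beyond), precisely because finite stability only needs the first $M$ modes to be matched; the tail components of $q-S_{n_0}(v_{\ast})$ will in general exceed the (possibly tiny) $b_j$ for large $j$, so the required kick has probability zero. Your parenthetical ``the kick support is large enough under either the big-kick assumption or the small-force hypothesis'' is therefore false, and the construction of the pseudo-orbit from $v_{\ast}$ to $U_{\epsilon}(p)$ collapses.

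The paper avoids this entirely by never leaving a neighborhood of the nonwandering point $y$: it writes $\beta(t_1+t_2,u,B_H(r,a))\geq \inf_{w\in B_H(\delta,y)}\beta(t_2,w,B_H(r,a))\cdot\beta(t_1,u,B_H(\delta,y))$ for $u\in B_H(\epsilon,y)$, where the first factor is your part one and the second is a \emph{return} estimate: by the definition of a nonwandering point, the deterministic flow of $U_{\epsilon}(p)$ re-enters $B_H(\epsilon,p)$ at some finite time, and since near-zero kicks have positive probability by Condition \ref{mainkicksassumption}, the stochastic flow started in a small ball around $y$ returns to $B_H(\delta,y)$ with positive probability. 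No large kick and no excursion from an external support point $v_{\ast}$ is needed; the remaining ingredient, positivity of $\mu(B_H(\epsilon,y))$, is handled in the full proof cited from \cite{Varner}. To repair your argument you would need to replace the bridging step with such a recurrence estimate localized at $y$, rather than trying to steer an arbitrary support point onto $U_{\epsilon}(p)$.
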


\noindent We outline the proof, which is similar to the steps in \cite{Kuksin2}, p. 320.

\begin{itemize}
	\item By time-invariance, for any $r,\epsilon>0$ and any $v\in H$  
\begin{equation}
	\begin{split}
			\mu(B_{H}(r,a)) = \int_{H}\beta(l,v,B_{H}(r,a))\mu(dv)
			& \geq \int_{B_{H}(\epsilon,v)}\beta(l,u,B_{H}(r,a))\mu(du) 
	\end{split}
\end{equation}

\item It is necessary to show that for any $a\in A$ there exists $v\in H$, $\epsilon>0$, and times $t_{1}$, $t_{2}$ such that
\begin{equation}
	\beta(t_{1}+t_{2},u,B_{H}(r,a))>0, \ \forall \ u\in B_{H}(\epsilon,v).
\end{equation}

\item By the definition of the set of attainability, there is a nonwandering point $y$ such that $a$ is accessible from $y$. Since for any $u\in B_{H}(\epsilon,v)$
\begin{equation}
		\begin{split}
			 \beta\left(t_{1}+t_{2},u,B_{H}(r,a)\right)
			& = \int_{H}
				\beta\left(t_{1},u,dw\right)\beta\left(t_{2},w,B_{H}(r,a)\right) \\
			& \geq \int_{B_{H}\left(\delta,y\right)}
				\beta\left(t_{1},u,dw\right)\beta\left(t_{2},w,B_{H}(r,a)\right) \\
			&	\geq \inf_{w\in B_{H}\left(\delta,y\right)}\beta\left(t_{2},w,B_{H}(r,a)\right)
							\beta\left(t_{1},u,B_{H}\left(\delta,y\right)\right)
	\end{split}
\end{equation}
it is enough that $\inf_{w\in B_{H}\left(\delta,y\right)}\beta\left(t_{2},w,B_{H}(r,a)\right)$ and $\beta\left(t_{1},u,B_{H}\left(\delta,y\right)\right)$ are strictly positive, which follows from the next two lemmas. 
\end{itemize}

\begin{lemma} Let $y$ be a nonwandering point. For any $\delta>0$ there is $\epsilon>0$, an integer $t_{1}= t_{1}(\delta)\geq 0$, and a constant $x=x(\delta)> 0$ such that
\begin{equation} \nonumber
	\mathbb{P}\left\{\norm{y-u^{t_{1}}(w)}_{H} \leq \delta \ \forall w\in B_{H}(\epsilon,y)\right\} 
	\geq x.   \label{positiveprobfornonwander} 
\end{equation}
\end{lemma}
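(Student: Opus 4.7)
My plan is to combine the nonwandering property of $y$, Lipschitz continuity of the kicked flow on bounded sets of $H$, and Lemma \ref{arbitraryapproxofkicks} into a ``near-return plus Lipschitz spread'' argument, in the spirit of the two preceding lemmas but using only the recurrence supplied by nonwandering in place of asymptotic or finite stability.

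First I would apply the nonwandering property with a parameter $\alpha>0$ (to be fixed below in terms of $\delta$), extracting an integer $t_{1}\geq 1$ (rounded up from the real time produced by the definition, using uniform continuity of $S_{\tau}$ for $\tau\in[0,1]$ on the absorbing ball) and a point $w^{*}\in U_{\alpha}(y)$ with $\|S_{t_{1}}w^{*}-y\|_{H}<\alpha$. Then I would consider the ``ideal'' length-$(t_{1}+1)$ kick block $(\eta_{1}^{*},0,\dots,0)$ with $\eta_{1}^{*}:=w^{*}-Sy$: starting from $y$ this sequence produces $u^{1}(y)=w^{*}$ and $u^{t_{1}+1}(y)=S_{t_{1}}w^{*}$, so $\|u^{t_{1}+1}(y)-y\|_{H}<\alpha$. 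Lipschitz continuity of the kicked flow in its initial condition over the absorbing ball then gives
\[
\|u^{t_{1}+1}(w)-u^{t_{1}+1}(y)\|_{H}\leq L(t_{1})\,\epsilon
\]
for every $w\in B_{H}(\epsilon,y)$. Choosing first $\alpha<\delta/4$ (which fixes $t_{1}$ and hence $L(t_{1})$) and then $\epsilon<\delta/(4L(t_{1}))$ yields $\|u^{t_{1}+1}(w)-y\|_{H}<\delta/2$ uniformly in $w$ under the ideal kicks. Joint continuity of $u^{t_{1}+1}$ in the kick sequence, uniform in $w\in B_{H}(\epsilon,y)$, furnishes $\gamma>0$ such that if $\|\eta_{1}-\eta_{1}^{*}\|_{H}<\gamma$ and $\|\eta_{j}\|_{H}<\gamma$ for $2\leq j\leq t_{1}+1$, the true trajectory stays within $\delta/2$ of the ideal one. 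Lemma \ref{arbitraryapproxofkicks} bounds the probability of this event below by $p_{0}(\gamma,t_{1}+1)>0$, yielding the desired $x$; relabeling $t_{1}\leftarrow t_{1}+1$ matches the statement.

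The main obstacle is to ensure $\eta_{1}^{*}=w^{*}-Sy$ lies in the kick support $K=\{\sum_{j}u_{j}e_{j}:|u_{j}|\leq b_{j}\}$, since Lemma \ref{arbitraryapproxofkicks} only approximates elements of $\mathrm{supp}\,D(\eta)$. The $H$-norm of $\eta_{1}^{*}$ is at most $2D(f)$ from the absorbing ball, and under the big-kick assumption the coefficients $b_{1}\geq 2D(f)$ and $b_{j}\geq 2D(f)/\lambda_{j}^{1/2}$ for $2\leq j\leq M$ cover $P_{M}\eta_{1}^{*}$; the residual tail can be absorbed into $\gamma$ by first projecting $w^{*}$ onto $\mathrm{span}(e_{1},\dots,e_{N})$, noting $Sy\in V$ by the smoothing property of the flow. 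In the alternative regime of an asymptotically stable solution the nonwandering point $y$ lies on the attracting periodic orbit and is fixed by $S$, so one may simply take $\eta_{1}^{*}=0$ and invoke the contraction $\|S_{t_{1}}w-y\|_{H}=\|S_{t_{1}}w-S_{t_{1}}y\|_{H}\leq q^{t_{1}}\epsilon$, bypassing the support question entirely.
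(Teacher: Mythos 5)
Your skeleton --- recurrence from the nonwandering property, then continuity in the initial condition and in the kicks, then Lemma \ref{arbitraryapproxofkicks} to produce a positive probability --- is the same as the paper's, and you correctly isolate the delicate point: the nonwandering definition only produces some point $w^{*}\neq y$ near $y$ whose flow returns near $y$, so one cannot simply invoke continuity of $S_{t_{1}}$ at $y$ itself. However, your fix --- inserting the designed first kick $\eta_{1}^{*}=w^{*}-Sy$ so that the reference trajectory passes exactly through $w^{*}$ --- opens a gap that your proposal does not close. Lemma \ref{arbitraryapproxofkicks} only approximates prescribed vectors lying in $\mathrm{supp}\,D(\eta)$, and membership there is a coordinate-wise constraint (each coefficient must satisfy $|u_{j}|\leq b_{j}$, indeed must lie in the support of $b_{j}\zeta_{j}$), not a bound on the $H$-norm. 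The point $w^{*}$ furnished by the nonwandering definition is an arbitrary element of $H$ with no regularity, so the coefficients of $w^{*}-Sy$ need not vanish beyond mode $N$ (where Condition \ref{mainkicksassumption} permits $b_{j}=0$) and need not satisfy $|(w^{*}-Sy)_{j}|\leq 2D/\lambda_{j}^{1/2}$ for $j\leq M$. Replacing $w^{*}$ by $P_{N}w^{*}$ does not help: $\norm{Q_{N}w^{*}}_{H}$ is not small for a general $w^{*}\in H$, so $S_{t_{1}}(P_{N}w^{*})$ need no longer return near $y$, which was the only property you extracted from nonwandering. Your argument therefore goes through only in the asymptotically stable case (where, as you correctly note, the unique nonwandering point is fixed by $S$ and $\eta_{1}^{*}=0$ suffices), whereas the lemma is asserted for an arbitrary nonwandering point and its proof in the paper does not invoke the big kick assumption at all.

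The paper's own argument stays with the zero realization of the kicks, which is always in the support by Condition \ref{mainkicksassumption}, and handles the ``returning point versus center point'' issue purely through continuity of the deterministic flow around the point supplied by the nonwandering property, deferring the quantitative ordering of the nonwandering radius, the return time $t_{1}$, and the Lipschitz constant $C(R,f,t_{1})$ of $S_{t_{1}}$ to \cite{Varner}, Lemma 5.5.11. If you want to salvage your version outside the asymptotically stable regime, you would need either to carry out that small-kick bookkeeping directly or to prove separately that the specific vector $w^{*}-Sy$ (or a usable substitute) actually lies in $\mathrm{supp}\,D(\eta)$; as written, that step is asserted rather than proved.
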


\noindent The proof is very similar to that of Condition \ref{controllability} and thus only a sketch is given. By the definition of a nonwandering point, the intersection of any open ball (for example of radius $\delta/4$) around a nonwandering point, $y$, has a non-empty intersection with the deterministic flow of the set at some time $t_{1}$. By continuity, there exists $\epsilon>0$ such that if the initial condition $w \in B_{H}(\epsilon,y)$ and the kicks are small enough then $u^{t_{1}}(w)\in B_{H}(\delta,y)$ (see \cite{Varner}, Lemma 5.5.11 or \cite{Kuksin2}, p. 321-322 ).

\begin{lemma} Let $A$ be the set of attainability from the set of nonwandering points. For any $a\in A$ and any $r>0$ there exists $\delta>0$ and a nonwandering point $y$ such that for some time $t_{2}$ and all $v\in B_{H}(\delta,y)$,
\begin{equation} \nonumber
	\beta(t_{2},v,B_{H}(r,a)) > 0.  \label{intoaball}
\end{equation} 
\end{lemma}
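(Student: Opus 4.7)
The plan is to combine the geometric structure of the set of attainability with continuity of the perturbed flow and Lemma \ref{arbitraryapproxofkicks2}. The argument proceeds in three short steps.

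First I would unpack the definitions. Since $a\in A$, there is a nonwandering point $y$ with $a\in A(y)=\overline{\bigcup_{i\geq 0}A_{i}(y)}$. Applying Lemma \ref{approxinfinitetime} at radius $r/2$, I obtain a finite integer $t_{2}\geq 0$ and a point $\tilde{a}\in A_{t_{2}}(y)$ with $\|\tilde{a}-a\|_{H}<r/2$. By the recursive definition of $A_{t_{2}}(y)$, there are kicks $\xi_{1},\ldots,\xi_{t_{2}}$ in the support of $D(\eta)$ such that $\tilde{a}$ is exactly the output of the iteration $w_{0}=y$, $w_{j}=Sw_{j-1}+\xi_{j}$ for $1\leq j\leq t_{2}$.

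Next I would use continuity of the map $(v,\eta_{1},\ldots,\eta_{t_{2}})\mapsto u^{t_{2}}(v)$. Each application of $S$ is Lipschitz on bounded sets in $H$ by Condition \ref{generalcontraction} (specifically \eqref{Condition1a}), and translation by a kick is an isometry, so the finite composition defining $u^{t_{2}}(v)$ with specified kicks is continuous on bounded sets, uniformly in $v$ on any fixed ball. Hence there exist $\delta>0$ and $\gamma>0$ such that whenever $v\in B_{H}(\delta,y)$ and $\|\eta_{j}-\xi_{j}\|_{H}\leq\gamma$ for $1\leq j\leq t_{2}$, the resulting trajectory satisfies $u^{t_{2}}(v)\in B_{H}(r/2,\tilde{a})\subset B_{H}(r,a)$.

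Finally I invoke Lemma \ref{arbitraryapproxofkicks}, which supplies a lower bound $p_{0}=p_{0}(\gamma,t_{2})>0$ on the probability that $\|\eta_{j}-\xi_{j}\|_{H}\leq\gamma$ holds simultaneously for $1\leq j\leq t_{2}$, uniformly in the prescribed $\xi_{j}$ in the support of $D(\eta)$. Combining the preceding two steps yields
\begin{equation}
\beta(t_{2},v,B_{H}(r,a))\geq p_{0}>0\quad\text{for every } v\in B_{H}(\delta,y), \nonumber
\end{equation}
which is the desired conclusion. The main obstacle is making the continuity estimate of step two uniform in both $v$ and the kick perturbations simultaneously; once the Lipschitz control of $S$ on a large enough ball is secured via Condition \ref{generalcontraction} and $t_{2}$ is kept finite, the passage from a deterministic reachability statement to a strictly positive transition probability via Lemma \ref{arbitraryapproxofkicks} is automatic.
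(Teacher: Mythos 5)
Your proposal is correct and follows essentially the same route as the paper's own sketch: approximate $a$ to within $r/2$ by a point of $A_{t_2}(y)$ via Lemma \ref{approxinfinitetime}, use continuity of the finite composition of $S$ and kick translations to get the $\delta$ and $\gamma$ tolerances, and then apply Lemma \ref{arbitraryapproxofkicks} to bound the transition probability below by a positive constant. The only difference is that you spell out the Lipschitz control of $S$ from \eqref{Condition1a} explicitly, which the paper leaves implicit.
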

\noindent The proof is nearly a repeat of the argument made in \cite{Kuksin2}, page 322, with modifications for the change in the definition of the set of attainability, so only a brief sketch is given. By Lemma \ref{approxinfinitetime}, there is $a_{k}\in B_{H}(r/2,a)$ such that $a_{k}$ is attainable by a finite sequence of fixed kicks from the nonwandering point $y$. By continuity of the flow and the properties of the kicks, there is a $\delta>0$ and $\gamma>0$ such that if $v\in B_{H}(\delta,y)$ and the kicks vary by at most $\gamma$ then there is a positive probability that $u^{t_{2}}(v)\in B_{H}(r,a)$.

Due to the existence of an asymptotically stable solution when the force is small enough, gives a zonal solution, or gives an almost zonal solution, the following holds.

\begin{corollary} If the force
\begin{enumerate}
	\item is small enough -see Remark \ref{differenceofsolutionssatisfiescontraction}
\item yields a zonal solution
\item yields an almost zonal solution
\end{enumerate}
then the support of the measure is the set of attainability from the unique exponentially stable periodic solution.
\end{corollary}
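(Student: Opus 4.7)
The plan is to combine the preceding theorem, which already gives $A(y) \subset \mathrm{supp}(\mu)$ for any nonwandering point $y$, with the exponential mixing bound from part (3) of the Main Theorem to pin down the support exactly. Each of the three hypotheses furnishes a unique globally exponentially stable time-periodic solution $u_\infty$. My first step will be to argue that the period of $u_\infty$ is exactly $T=1$, not a higher harmonic $kT$: for small forces this is Remark \ref{differenceofsolutionssatisfiescontraction}; for zonal forces it is Lemma \ref{thiszonallemma}; for almost zonal forces it follows from Lemma \ref{AlmostZonalProp} together with the ``stability radius large enough'' clause of Theorem \ref{Heywoodtheorem}, since global exponential stability corresponds to $\delta=\infty$. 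Consequently $y_* := u_\infty(0)$ is a fixed point of $S = S_1$, and the attainability set from the orbit of $u_\infty$ sampled at integer times reduces to $A(y_*)$.

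Next I would identify the nonwandering set (at integer times) with $\{y_*\}$. Since $Sy_* = y_*$, for any $\epsilon>0$ any $w$ close to $y_*$ has $S_k w \to y_*$ along integer $k$, so $S_k(U_\epsilon(y_*)) \cap B_H(\epsilon,y_*)$ is nonempty for large integer $k$, making $y_*$ nonwandering. Conversely, for $x \ne y_*$, the global exponential convergence $\norm{S_t w - u_\infty(t)}_H \to 0$ uniformly in $w \in B_H(\epsilon,x)$ separates $B_H(\epsilon,x)$ from $S_t(U_\epsilon(x))$ for all sufficiently large $t$, so $x$ is wandering. Applying the prior theorem then gives the forward inclusion $A(y_*) \subset \mathrm{supp}(\mu)$.

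The remaining task is the reverse inclusion $\mathrm{supp}(\mu) \subset A(y_*)$. Fix $z \notin A(y_*)$ and $\epsilon>0$ with $B_H(\epsilon,z) \cap A(y_*) = \emptyset$, and let $\phi$ be a Lipschitz bump with $\phi \equiv 1$ on $B_H(\epsilon/2,z)$ and $\phi \equiv 0$ outside $B_H(\epsilon,z)$. Because $u^k(y_*) \in A_k(\{y_*\}) \subset A(y_*)$ almost surely, $\phi(u^k(y_*)) = 0$ a.s., so $\beta_k\phi(y_*) = 0$ for every $k \geq 0$. On the other hand, applying the exponential mixing estimate of the Main Theorem at the base point $y_*$ yields $\beta_k\phi(y_*) \to (\mu,\phi)$, forcing $(\mu,\phi) = 0$ and hence $\mu(B_H(\epsilon/2,z)) = 0$. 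Thus $z \notin \mathrm{supp}(\mu)$, completing the equality.

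I expect the main obstacle to be the first step: verifying in each of the three settings that the exponentially stable solution inherits the period $T$ of the force rather than a strictly larger harmonic $kT$. Without this, $u_\infty$ would intersect $H$ in a finite orbit of fixed points of $S$ rather than in the single point $y_*$, and the attainability description would have to be taken from that whole orbit. Once the period issue is settled, both the forward inclusion (immediate from the previous theorem) and the reverse inclusion (a short test-function argument via exponential mixing) are routine.
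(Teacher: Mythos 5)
Your argument is correct and, on the forward inclusion, follows the route the paper intends: the paper derives the corollary in a single sentence from the preceding theorem (attainability from nonwandering points lies in the support) together with the existence, in each of the three cases, of a unique globally exponentially stable periodic solution, and it defers the reverse inclusion to \cite{Varner} rather than proving it in the text. Where you genuinely add something is (i) the explicit verification that the period of $u_\infty$ is $T=1$ in all three cases (via Remark \ref{differenceofsolutionssatisfiescontraction}, Lemma \ref{thiszonallemma}, and the ``$\delta$ large enough'' clause of Theorem \ref{Heywoodtheorem} applied with $\delta=\infty$ in the almost zonal case), so that $y_*=u_\infty(0)$ is a genuine fixed point of $S$; and (ii) a self-contained proof of $\mathrm{supp}(\mu)\subset A(y_*)$ by testing the mixing estimate of the Main Theorem at the base point $y_*$ against a Lipschitz bump supported off the closed set $A(y_*)$, using that $u^k(y_*)\in A_k(\{y_*\})$ almost surely. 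That test-function argument is clean and supplies exactly the half of the equality the paper leaves implicit. One inaccuracy: your claim that every $x\neq y_*$ is wandering fails for the points $u_\infty(s)$ with $0<s<1$ on the periodic orbit, since for $w$ near $u_\infty(s)$ one has $S_k w\to S_k u_\infty(s)=u_\infty(k+s)=u_\infty(s)$ along integers $k$, so these points are also nonwandering. This does not damage the proof --- the forward inclusion needs only that $y_*$ is nonwandering, and your reverse inclusion then shows a posteriori that $A(u_\infty(s))\subset A(y_*)$ --- but the sentence should be deleted or restricted to $x$ off the orbit of $u_\infty$.
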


\section{Conclusion}

While there is invariant measure for the kicked Navier-Stokes equations with a bounded time-periodic deterministic force, it is only possible to give a clear description of the support of the measure in a few limited situations. Furthermore, since the kicks can be taken arbitrarily small (with the first $N$ nonzero), if there is an asymptotically stable solution then the support can be considered to nearly be the unique globally stable periodic solution. Unfortunately, for more general forces the support of the measure is not as clear. For example, it is not as clear what nonwandering points may exist. While the assumption of finitely stable point gives that there is a (at least one) periodic solution (possibly with the same period as the force), the size requirement on the kick is problematic both for understanding the support of the measure and for meteorological considerations. 

It is possible, however, that the kicks may be allowed to be smaller. The big kick assumption is to ensure that a kick can, with positive probability, send the flow into the neighborhood of any point in the deterministic absorbing ball. The reason for the big kick assumption comes from the deterministic setting where a dirac measure at any stationary solution is a time-invariant measure, giving non uniqueness if there are multiple stationary solutions. For example, if there are two stable stationary solutions the kicks must be (at minimum) large enough to send the flow from inside the radius of stability of one into the radius of stability of the other. The big kick assumption is sufficient to do this, but a smaller kick may suffice.

Of course, the majority of the results presented in this paper apply to the Navier-Stokes equations on the torus. While the results concerning the zonal solutions no longer hold, if the force yields a unique asymptotically stable solution then the support of the measure is again straightforward to describe.

\section{Appendix}


\subsection{Estimates}

We now present estimates that will be needed to establish Conditions \ref{generalcontraction} and \ref{conditionforconvergence} and Lemmas \ref{thiszonallemma} and \ref{AlmostZonalProp}. With the exception of Lemma \ref{lemmaforzonalestimates} these estimates are analogous to standard estimates on flat-domains with periodic boundary conditions.

\begin{lemma} For $u\in V$ the Poincare Inequality holds, i.e.
\begin{equation}
	\norm{u}_{V}^{2}=
	\|A^{1/2}u\|_{H}^{2} 
	\geq \lambda_{1}\norm{u}_{H}^{2} \label{Poincare}
\end{equation}
where $\lambda_{1}$ is the first eigenvalue of the Laplacian.
In particular, the V-norm is equivalent to the $H^{1}$-norm on $V$.
\end{lemma}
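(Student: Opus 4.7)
The plan is to use the spectral theorem already invoked earlier for $A = \curl\curl_n$. By the theorem stated just before the definition of the projection $P_H$, the operator $A$ is self-adjoint and positive-definite on $H$ with eigenvalues $0 < \lambda_1 \leq \lambda_2 \leq \cdots \to \infty$ and a corresponding orthonormal basis $\{e_j\}$ of $H$. For any $u \in V \subset D(A^{1/2})$ I would expand
\begin{equation}
u = \sum_{j=1}^{\infty} c_j e_j, \qquad c_j = \langle u, e_j\rangle_H,
\end{equation}
so that $\|u\|_H^2 = \sum_{j} c_j^2$ and $\|A^{1/2} u\|_H^2 = \sum_{j} \lambda_j c_j^2$, the latter being finite exactly because $u \in V = D(A^{1/2})$.

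The Poincaré inequality then follows from a one-line comparison: since $\lambda_j \geq \lambda_1$ for every $j \geq 1$,
\begin{equation}
\|A^{1/2} u\|_H^2 = \sum_{j=1}^{\infty} \lambda_j c_j^2 \geq \lambda_1 \sum_{j=1}^{\infty} c_j^2 = \lambda_1 \|u\|_H^2.
\end{equation}
The identity $\|u\|_V^2 = \|A^{1/2}u\|_H^2$ is just the definition of the $V$ norm together with $Au = -\Delta u$ on divergence-free fields, so the stated inequality is immediate.

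For the equivalence of the $V$ and $H^1$ norms on $V$, I would argue in both directions. From the definition of the $H^1(TM)$ norm and the identity $\langle -\Delta u, u\rangle_{L^2(TM)} = \|A^{1/2}u\|_H^2$ for $u \in V$, one has $\|u\|_{H^1}^2 = \|u\|_H^2 + \|u\|_V^2$, giving $\|u\|_V \leq \|u\|_{H^1}$ trivially. Conversely, the Poincaré inequality just established yields
\begin{equation}
\|u\|_{H^1}^2 = \|u\|_H^2 + \|u\|_V^2 \leq \left(\tfrac{1}{\lambda_1} + 1\right) \|u\|_V^2,
\end{equation}
so the two norms are equivalent on $V$.

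The only place that requires anything beyond elementary manipulation is the use of the spectral decomposition for $A$, but this is already supplied by the theorem in the preceding subsection, so no real obstacle arises. The argument is essentially the same as for the Laplacian on the torus; the sphere enters only through the fact that the first eigenvalue $\lambda_1$ of $A = \curl\curl_n$ is strictly positive, which again is guaranteed by the cited theorem.
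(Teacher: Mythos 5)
Your proof is correct and follows essentially the same route as the paper, which simply notes that the argument is identical to the flat-domain case because $A$ admits an orthonormal eigenbasis with eigenvalues $0<\lambda_1\leq\lambda_2\leq\cdots$; the spectral expansion and termwise comparison you write out is exactly that standard argument, and your derivation of the norm equivalence from it is the intended one.
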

\noindent The proof is identical to the case of flat domains due to the existence of an orthonormal basis. Furthermore, $\lambda_{1}$ is the first eigenvalue of the scalar Laplacian on the sphere (\cite{Ilin}, p.567).


\begin{lemma}\label{trilinearestimateslemma} For $u,v,w \in \ V$, the trilinear form satisfies
\begin{align}
	& b(u,v,v)= 0, \ \ b(u,v,w)= - b(u,w,v),    \label{bproperties} \\
	& |b(u,v,w)|\leq k\norm{u}_{H^{1}}\norm{v}_{H^{1}}\norm{w}_{H^{1}},   \label{bestimate}\\ 
	& \left|b(u,v,w)\right|
	\leq
		k\norm{u}^{1/2}_{H}\norm{u}^{1/2}_{H^{1}}
		\norm{v}_{H^{1}}\norm{w}^{1/2}_{H}
		\norm{w}^{1/2}_{H^{1}}. \label{bestimate2}
\end{align}
If $v\in H^{2}\cap V$ then
\begin{align}
	&  b(v,v,Av)=0,     \label{bproperty} \\
	&	|b(u,v,w)|\leq k\norm{w}_{H}\norm{v}_{H^{2}}\norm{u}_{H^{1}}, \label{bestimate3} \\
	& \left|b(u,v,w)\right|
	\leq
		k\norm{u}_{H}^{1/2}\norm{u}_{H^{1}}^{1/2}
		\norm{v}^{1/2}_{H^{1}}
		\norm{v}^{1/2}_{H^{2}}\norm{w}_{H}.  \label{bestimate4}.  
\end{align}  

Furthermore, let $u\in H^{2}\cap V$ be a zonal vector field (only latitudinal dependence) and $v\in H^{2}\cap V$ then 
\begin{align}
	&  b(u,v,Av)=0,  \label{strongerthanskiba} \\
	&	 b(v,u,Av) =0. \label{Calc3}
\end{align}                                      
\end{lemma}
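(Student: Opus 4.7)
The plan is to reduce both identities to manipulations of integrals with zonal weights, using two structural facts: a zonal divergence-free field must take the form $u=u_\lambda(\phi)\vec{\lambda}$, and the vector Laplacian $A$ commutes with longitudinal derivatives $\partial_\lambda$ (longitudinal rotation being an isometry of $S^{2}$). The approach is to pass to the vorticity formulation by writing $u=-\vec{n}\times\nabla\varphi$ and $v=-\vec{n}\times\nabla\psi$, with $\varphi=\varphi(\phi)$ zonal and $\psi$ arbitrary; then $\omega_v=\curl_n v=-\Delta\psi$ and $Av=-\vec{n}\times\nabla\omega_v$.

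The coordinate form of $u$ follows from $\div u=\tfrac{1}{\cos\phi}(\partial_\lambda u_\lambda+\partial_\phi(u_\phi\cos\phi))=0$: with $\partial_\lambda u_\lambda\equiv 0$, one gets $u_\phi\cos\phi=\mathrm{const}$, and $H^{2}$-regularity at the poles forces the constant to be zero, so $u=u_\lambda(\phi)\vec{\lambda}$.

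For \eqref{strongerthanskiba}, integration by parts against the $\curl$/$\curl_n$ duality gives
\[ b(u,v,Av)=\int_{M}\nabla_{u}v\cdot Av\,dM=-\int_{M}\omega_v\,\curl_n(\nabla_{u}v)\,dM. \]
A direct coordinate computation --- the content of Lemma \ref{lemmaforzonalestimates} in the appendix --- shows that for a zonal $u$, $\curl_n(\nabla_{u}v)=\nabla_{u}\omega_v$: the curvature/Christoffel contributions that normally obstruct this identity carry factors $\partial_\lambda\varphi$ or $\partial_\lambda^{2}\varphi$, which vanish by zonality of $\varphi$. Since $\nabla_{u}g=\tfrac{u_\lambda(\phi)}{\cos\phi}\partial_\lambda g$ for any scalar $g$, and the factor $1/\cos\phi$ cancels against the $\cos\phi$ in $dM=\cos\phi\,d\phi\,d\lambda$, one obtains
\[ b(u,v,Av)=-\!\int_{-\pi/2}^{\pi/2}\!u_\lambda(\phi)\!\int_{0}^{2\pi}\partial_\lambda\!\Bigl(\tfrac{1}{2}\omega_v^{2}\Bigr)\,d\lambda\,d\phi=0 \]
by $2\pi$-periodicity in $\lambda$. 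For \eqref{Calc3}, the antisymmetry $b(v,u,Av)=-b(v,Av,u)$ from \eqref{bproperties} reduces matters to $\int\nabla_{v}(Av)\cdot u\,dM$. Since $u$ has only its $\vec{\lambda}$-component depending only on $\phi$, the pairing collapses to $\int u_\lambda(\phi)[\nabla_{v}(Av)]_\lambda\,dM$; expanding $\psi=\sum_{n}\psi_n(\phi)e^{in\lambda}$ so that $A$ acts diagonally on Fourier modes and integrating in $\lambda$ pairs $n$ with $-n$, and a final integration by parts in $\phi$ (with pole boundary terms vanishing for $H^{2}$ fields) yields the cancellation.

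The main obstacle is the sphere-specific identity $\curl_n(\nabla_{u}v)=\nabla_{u}\omega_v$ for zonal $u$: extra curvature terms appear a priori in $\curl_n(\nabla_{u}v)$ on $S^{2}$, and showing that they collapse under the zonal hypothesis is the technical heart of the argument, carried out via Lemma \ref{lemmaforzonalestimates}. Once that identity is in hand, the remainder of the proof is the periodic integration and Fourier-mode cancellation described above.
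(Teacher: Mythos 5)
Your proposal only engages with the two zonal identities \eqref{strongerthanskiba} and \eqref{Calc3}; that is a defensible focus (the paper itself disposes of \eqref{bproperties}--\eqref{bestimate4} by citation to Il'in, H\"older and Ladyzhenskaya), but the way you handle \eqref{strongerthanskiba} contains a genuine gap. Your entire argument rests on the commutation identity $\curl_n(\nabla_u v)=\nabla_u\omega_v$ for zonal $u$ and arbitrary divergence-free $v$, which you assert rather than prove, and which you attribute to Lemma \ref{lemmaforzonalestimates} --- but that lemma states only that the Coriolis term satisfies $\langle l\,\vec{n}\times u, A^{r}u\rangle=0$; it says nothing about $\curl_n$ of the advection term. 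Worse, the identity is false as stated. Already in the flat analogue, with the ``zonal'' shear $u=(g(x_2),0)$ one computes $\curl(\nabla_u v)=g\,\partial_1\omega_v-g'(x_2)\,\partial_1 v_1=\nabla_u\omega_v-g'\,\partial_1 v_1$, and the obstruction $-g'\partial_1 v_1$ carries a \emph{latitudinal} second derivative of the zonal stream function (since $g'=-\partial_2^2\varphi$), not the longitudinal derivatives $\partial_\lambda\varphi$ that you claim are the only obstructing terms. So zonality does not kill it, and the vanishing of its contribution $\int g'\,\partial_1 v_1\,\omega_v$ is precisely the nontrivial content that the paper outsources to the calculation on p.~69 of Il'in (via Lemma 4.4 there). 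As written, your proof of \eqref{strongerthanskiba} does not go through.

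For \eqref{Calc3} your route also differs from the paper's and is shakier: invoking the antisymmetry $b(v,u,Av)=-b(v,Av,u)$ from \eqref{bproperties} requires the third argument to lie in $V$, i.e.\ $Av\in H^{1}$, which is more regularity than the hypothesis $v\in H^{2}\cap V$ provides (one would need a density argument you do not supply), and the final ``Fourier modes pair $n$ with $-n$ plus an integration by parts in $\phi$'' is asserted rather than carried out. The paper instead avoids both issues by writing the nonlinearity in rotational form and passing to stream functions: with $\bar{\psi}$ zonal and $\omega_v=\Delta\psi$, it computes $b(v,u,Av)=\langle J(\bar{\psi},\Delta\psi),\Delta\psi\rangle=-\tfrac{1}{2}\int_{M}J\bigl(\bar{\psi},[\Delta\psi]^{2}\bigr)\,dM=0$ by Stokes' theorem, using zonality only to drop the $\partial_\lambda\bar{\psi}$ term of the Jacobian. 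You should either reproduce that computation or supply an actual proof of your commutation identity with the extra term tracked explicitly.
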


\begin{proof}
Since the proof of \eqref{bproperties} and \eqref{bproperty} are identical to the ones in \cite{Ilin}, pp. 566-568, the proofs of \eqref{bestimate}, \eqref{bestimate2}, \eqref{bestimate3}, and \eqref{bestimate4} follow from applying the H\"older inequality and the Ladyzhenskya inequality after taking extensions (\cite{Ilin}, pp. 566-567) and the proof of \eqref{strongerthanskiba} is identical to the calculation on p. 69 of \cite{Ilin2} (which uses Lemma 4.4 on p. 62 there), we will only prove \eqref{Calc3} here.

Now recall that (\cite{Ilin}, pp. 567-568) since the sphere is simply connected, for a divergence-free vector field $u$, there is a flow function $\psi$
\begin{equation} \nonumber
		 u= -\curl \psi\vec{n} = \vec{n}\times \nabla \psi, \quad
		\curl_{n}u = \Delta \psi \vec{n}.
\end{equation} 

\noindent Furthermore, the standard spherical Jacobian will be needed and is given by (\cite{Ilin2} p.51)
$$J(a,b)=-\curl_{n}\left(\vec{n}\times \left(\vec{n}\times\nabla b\right)\right) = \frac{1}{\cos\phi}\left(\frac{\partial a}{\partial \lambda}\frac{\partial b}{\partial \phi}-\frac{\partial b}{\partial \lambda}\frac{\partial a}{\partial \phi}\right).$$

\noindent Note that by Stoke's Theorem $\int_{M}J(a,b)dM=0$.

The proof of equation \eqref{Calc3} uses an argument similar to \cite{Ilin2}, 70. Recall that $B(v,u)= \curl_{n}v\times u$ and denote the flow functions for $u$ and $v$ as $\bar{\psi}$ and $\psi$, respectively. 
\begin{equation}
	\begin{split}
		 \left\langle \curl_{n}v\times u,Av \right\rangle
		&	= 
			\left\langle \curl_{n}\left(\curl_{n}v\times u\right),\curl_{n}v \right\rangle \\
		& = \left\langle \curl_{n}\left(\vec{n}\Delta\psi\times u\right),\Delta\psi \right\rangle
			= \left\langle J(\bar{\psi},\Delta\psi),\Delta\psi \right\rangle \\
		& = -\left\langle \dfrac{1}{\cos\phi}\partial_{\phi}\bar{\psi}\partial_{\lambda}\Delta\psi, \Delta\psi\right\rangle \\
		& = -\dfrac{1}{2}\int_{M} \dfrac{1}{\cos\phi}\partial_{\phi}\bar{\psi}\partial_{\lambda}\left[\Delta\psi\right]^{2}dM \\
		& = -\dfrac{1}{2}\int_{M}J(\bar{\psi},\left[\Delta\psi\right]^{2})dM=
		0 . 
	\end{split}
\end{equation}

\noindent Note that $\Delta$ is the standard spherical Laplacian on the sphere.
\end{proof}


The following lemma will allow for the Coriolis term $C(u)$ to vanish from all the estimates. Its proof only uses that the Laplacian commutes with differentiability in the longitudinal direction - see \cite{Skiba1}, p. 635.

\begin{lemma}\label{lemmaforzonalestimates} 
For smooth vector fields $u$, the following holds for $r\geq 0$
\begin{equation}
	\left\langle C(u),v\right\rangle_{H}
	=\left\langle l\,\vec{n}\times u, A^{r}u\right\rangle= 0.  \label{Skiba-estimate}
\end{equation}
\end{lemma}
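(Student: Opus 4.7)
The plan is to rewrite the pairing as a scalar integral involving the stream function of $u$, and then to exploit the fact that the Coriolis parameter $l=2\Omega\sin\phi$ is zonal so that everything reduces to an integral against $\partial_\lambda$, which is anti-self-adjoint and commutes with the spherical scalar Laplacian (so the displayed identity should be read with the test vector equal to $A^{r}u$).

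First I would pass to the stream function: since $u\in H$ is divergence-free on the simply connected sphere, write $u=\vec{n}\times\nabla\psi$ as in the proof of Lemma \ref{trilinearestimateslemma}. Then $\vec{n}\times u=-\nabla\psi$ by the vector triple product, and iterating the identities $\curl_{n}u=\Delta\psi\,\vec{n}$ and $\curl(\phi\vec{n})=-\vec{n}\times\nabla\phi$ (from the Remark following the definition of $\curl$) gives $A^{r}u=\vec{n}\times\nabla\Phi$ with $\Phi=(-\Delta)^{r}\psi$. Expanding the triple scalar product in the orthonormal frame $\{\vec{\phi},\vec{\lambda},\vec{n}\}$ and matching against the defining formula for the spherical Jacobian $J$, the pairing should collapse to
\begin{equation*}
\left\langle l\,\vec{n}\times u, A^{r}u\right\rangle
= \int_{M} 2\Omega\sin\phi\, J(\Phi,\psi)\, dM.
\end{equation*}

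Next I would peel off the factor $2\Omega\sin\phi$ using the product rule $J(fg,h)=f\,J(g,h)+g\,J(f,h)$ together with $\int_{M} J(\cdot,\cdot)\,dM=0$ (Stokes). Since $l$ is zonal, a direct evaluation gives $J(2\Omega\sin\phi,\psi)=-2\Omega\,\partial_{\lambda}\psi$, and the identity reduces to showing
\begin{equation*}
\int_{M} (-\Delta)^{r}\psi\cdot \partial_{\lambda}\psi\, dM = 0.
\end{equation*}
This is precisely where the hint in the text enters. Anti-self-adjointness of $\partial_{\lambda}$ on $L^{2}(M)$ (valid because $dM=\cos\phi\,d\phi\,d\lambda$ has no $\lambda$-dependence and the domain is $2\pi$-periodic in $\lambda$), self-adjointness of $\Delta$, and the commutation $\Delta\partial_{\lambda}=\partial_{\lambda}\Delta$ (which holds since the spherical scalar Laplacian has no longitudinal coefficients) together force the integral to equal its own negative, hence to vanish.

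The main obstacle is not analytical but bookkeeping: tracking the sign introduced each time $A$ is applied to the stream function (each application flips the sign of the effective Laplacian), the sign in $\nabla a\times\nabla b=-J(a,b)\vec{n}$, and the orientation in the triple product. Once these conventions are pinned down the argument is uniform in $r\geq 0$, so the Coriolis term drops out of every $H^{s}$-level energy estimate, which is exactly what is needed to support Lemmas \ref{thiszonallemma} and \ref{AlmostZonalProp}.
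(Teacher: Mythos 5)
Your proof is correct and follows exactly the mechanism the paper points to: the paper gives no proof of its own but cites Skiba (p.~635) with the remark that the argument rests on the scalar Laplacian commuting with $\partial_\lambda$, and your reduction to $\int_M (-\Delta)^r\psi\,\partial_\lambda\psi\,dM=0$ via the stream function, the Jacobian product rule, and anti-self-adjointness of $\partial_\lambda$ is precisely that argument carried out in detail. The sign bookkeeping you flag checks out, and for non-integer $r$ the commutation of $(-\Delta)^r$ with $\partial_\lambda$ follows spectrally from the spherical harmonics, so the claim holds uniformly in $r\geq 0$ as stated.
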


\noindent We now turn to the proofs of Conditions \ref{generalcontraction} and \ref{conditionforconvergence} and Lemmas \ref{thiszonallemma} and \ref{AlmostZonalProp}. Since many of the calculations are standard, only the main steps are given. Recall that $f\in L^{\infty}(0,\infty;H)$.

\subsection{Proof of Condition \ref{generalcontraction}}

Let $S_{t}u_{0}$ be the solution of the 2D Navier-Stokes equations with initial condition $u_{0}$ at time $t$.


\begin{lemma}
The following inequalities hold for the deterministic 2D Navier-Stokes equation on the sphere for all $t \geq 0$:
 \begin{align}
         &    \norm{S_{t}u_{0}}^{2}_{H} \leq \norm{u_{0}}^{2}_{H}e^{-\lambda_{1}\nu t}
             +
             \dfrac{\norm{f}_{L^{\infty}(0,\infty;H)}^{2}}{\nu^{2}\lambda_{1}}
              \left(1-e^{-\nu\lambda_{1}t}\right)
              \label{Hcontraction}\\
         & \norm{S_{t}u_{0}}^{2}_{H^{1}}
             \leq \norm{u_{0}}^{2}_{H^{1}}e^{-\lambda_{1}\nu t}
             +
             \dfrac{\norm{f}^{2}_{L^{\infty}(0,\infty;H)}}{\nu^{2}\lambda_{1}}
             \left(1-e^{-\nu\lambda_{1}t}\right),
             \label{Vcontraction}
     \end{align}
     where $\lambda_{1}$ is the first eigenvalue of the operator $-\Delta$ on functions.

Moreover, for any $t\geq \dfrac{1}{2}$
\begin{equation}
    \norm{S_{t}u_{0}}^{2}_{H^{1}} \leq K\norm{u_{0}}^{2}_{H}e^{-\nu\lambda_{1}t}+ C_{1}\norm{f}_{L^{\infty}(0,\infty;H)}^{2}. \label{VtoHcontraction}
\end{equation}
\end{lemma}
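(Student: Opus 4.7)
The plan is to derive each of the three inequalities by standard energy estimates, exploiting two key algebraic facts that carry over intact from the flat case: the trilinear identities $b(u,u,u)=0$ and $b(u,u,Au)=0$ from Lemma \ref{trilinearestimateslemma}, together with the Coriolis-vanishing identity $\langle C(u),A^{r}u\rangle_{H}=0$ from Lemma \ref{lemmaforzonalestimates} for $r=0,1$. These two ingredients reduce the problem to essentially the same linear differential inequalities that appear in the flat-periodic case, and Gronwall's lemma finishes each argument.

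For \eqref{Hcontraction}, I would pair the projected equation \eqref{projectednavierstokes} with $u$ in $H$. The nonlinear term drops by \eqref{bproperties} and the Coriolis term drops by \eqref{Skiba-estimate} with $r=0$, leaving
$$\tfrac{1}{2}\tfrac{d}{dt}\norm{u}_{H}^{2}+\nu\norm{u}_{V}^{2}=\langle f,u\rangle_{H}.$$
Applying Cauchy--Schwarz, Young's inequality with weight $\nu\lambda_{1}$, and the Poincaré inequality \eqref{Poincare} to absorb half of the dissipation yields
$\tfrac{d}{dt}\norm{u}_{H}^{2}+\nu\lambda_{1}\norm{u}_{H}^{2}\leq \tfrac{1}{\nu\lambda_{1}}\norm{f}_{L^{\infty}(0,\infty;H)}^{2}$,
and Gronwall produces \eqref{Hcontraction}. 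For \eqref{Vcontraction}, assuming first that $u_{0}\in V$ so that Theorem \ref{existenceofprojectedsolution} gives $u(t)\in D(A)$ a.e., I would instead pair with $Au$ in $H$. Now the nonlinear term vanishes by \eqref{bproperty} and Coriolis vanishes by \eqref{Skiba-estimate} with $r=1$, leaving
$$\tfrac{1}{2}\tfrac{d}{dt}\norm{u}_{V}^{2}+\nu\norm{Au}_{H}^{2}=\langle f,Au\rangle_{H}.$$
Estimating the right side with Young's inequality and using $\norm{Au}_{H}^{2}\geq\lambda_{1}\norm{u}_{V}^{2}$, the same Gronwall argument yields \eqref{Vcontraction}.

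For \eqref{VtoHcontraction} the difficulty is that $u_{0}\in H$ need not lie in $V$, so \eqref{Vcontraction} cannot be invoked at $t=0$. The plan is to use the instantaneous smoothing inherent in Theorem \ref{existenceofprojectedsolution}, which gives $u\in L^{2}(0,T;V)$. Integrating the energy identity underlying \eqref{Hcontraction} over $[0,\tfrac{1}{2}]$ gives
$$2\nu\int_{0}^{1/2}\norm{u(s)}_{V}^{2}\,ds\leq \norm{u_{0}}_{H}^{2}+C\norm{f}_{L^{\infty}(0,\infty;H)}^{2},$$
so by the mean value theorem there exists $t_{\ast}\in[0,\tfrac{1}{2}]$ with $\norm{u(t_{\ast})}_{V}^{2}\leq K\bigl(\norm{u_{0}}_{H}^{2}+\norm{f}_{L^{\infty}}^{2}\bigr)$. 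Applying \eqref{Vcontraction} from $t_{\ast}$ to any $t\geq \tfrac{1}{2}\geq t_{\ast}$ gives
$$\norm{u(t)}_{V}^{2}\leq \norm{u(t_{\ast})}_{V}^{2}e^{-\nu\lambda_{1}(t-t_{\ast})}+C\norm{f}_{L^{\infty}}^{2},$$
and substituting the bound on $\norm{u(t_{\ast})}_{V}^{2}$ and folding the factor $e^{\nu\lambda_{1}t_{\ast}}\leq e^{\nu\lambda_{1}/2}$ into the constant $K$ produces \eqref{VtoHcontraction}.

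The main obstacle is the intermediate-time regularity step for \eqref{VtoHcontraction}: one must carefully justify that the energy identity used to bound $\int_{0}^{1/2}\norm{u}_{V}^{2}\,ds$ holds for merely $H$-valued initial data, and that the mean-value selection of $t_{\ast}$ is compatible with the strong-solution hypotheses needed for \eqref{Vcontraction}. Both points reduce to the Leray--Hopf-type construction in Theorem \ref{existenceofprojectedsolution}, and once they are in place the rest of the argument is a direct adaptation of the flat-periodic estimates.
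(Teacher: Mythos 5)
Your argument for \eqref{Hcontraction} and \eqref{Vcontraction} is exactly the paper's: pair with $u$ (resp.\ $Au$), kill the nonlinear term via \eqref{bproperties} (resp.\ \eqref{bproperty}) and the Coriolis term via \eqref{Skiba-estimate}, then Young, Poincar\'e, and Gronwall. The only divergence is in \eqref{VtoHcontraction}. You select, by the mean value theorem applied to $\int_{0}^{1/2}\norm{u(s)}_{V}^{2}\,ds$, a single good time $t_{\ast}\in[0,\tfrac12]$ at which $\norm{u(t_{\ast})}_{V}^{2}\leq K(\norm{u_{0}}_{H}^{2}+\norm{f}_{L^{\infty}}^{2})$, and then propagate \eqref{Vcontraction} forward from $t_{\ast}$ over the whole interval $[t_{\ast},t]$. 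The paper instead works near the terminal time: it integrates the one-step consequence of \eqref{Vcontraction}, namely $\norm{S_{t}u_{0}}_{H^1}^{2}\leq\norm{S_{t_{0}}u_{0}}_{H^1}^{2}+C\norm{f}^{2}$ for $t-\tfrac12<t_{0}<t$, with respect to $t_{0}$ over $[t-\tfrac12,t]$, and bounds the resulting average by the energy inequality \eqref{integralinequality} together with the $H$-decay \eqref{Hcontraction} up to time $t-\tfrac12$. Both are standard uniform-Gronwall-type smoothing arguments and both yield \eqref{VtoHcontraction}; the paper's sliding-window version only ever invokes the $H^{1}$ estimate on intervals of length at most $\tfrac12$ and needs no selection of a distinguished regular time, while yours requires (as you note) that the strong-solution theory of Theorem \ref{existenceofprojectedsolution} applies from the a.e.\ $V$-valued time $t_{\ast}$ onward --- which it does for 2D Navier--Stokes, so there is no gap. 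One cosmetic point: weighting Young's inequality by $\nu\lambda_{1}$ as you describe produces the constant $\norm{f}^{2}/(\nu^{2}\lambda_{1}^{2})$ rather than the stated $\norm{f}^{2}/(\nu^{2}\lambda_{1})$; the paper's weighting ($\tfrac{1}{2\nu}\norm{f}^{2}+\tfrac{\nu}{2}\norm{u}_{H^{1}}^{2}$, then Poincar\'e) gives the exact stated form, but this is a matter of bookkeeping, not substance.
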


\begin{proof}
The proof follows the estimates in \cite{Ilin}, p. 572. Take the $L^{2}$ inner product of the Navier-Stokes equation with $u$. By \eqref{Skiba-estimate} and \eqref{bproperties}
\begin{align}
    & \dfrac{1}{2}\partial_{t}\norm{u}_{H}^{2}
        +
        \nu\|A^{1/2}u\|_{H}^{2}
        =
        \left\langle f,u\right\rangle  \nonumber\\
     \Rightarrow \
    & \dfrac{1}{2}\partial_{t}\norm{u}_{H}^{2}
        +
        \nu\norm{u}_{H^{1}}^{2}
        \leq
        \dfrac{1}{2\nu}\norm{f}_{L^{\infty}(0,\infty;H)}^{2}
        +
        \dfrac{\nu}{2}\norm{u}_{H^{1}}^{2}\label{tointegrate}   \\
     \Rightarrow \
    &    \partial_{t}\norm{u}_{H}^{2}
        \leq
        -\nu\lambda_{1}\norm{u}_{H}^{2} + \dfrac{1}{\nu}\norm{f}_{L^{\infty}(0,\infty;H)}^{2}
        \quad \text{by \ (\ref{Poincare})}.        \nonumber
\end{align}
This gives
\begin{equation}
    \norm{S_{t}u_{0}}^{2}_{H} \leq \norm{u_{0}}^{2}_{H}e^{-\lambda_{1}\nu t}
    +
 \dfrac{1}{\lambda_{1}\nu^{2}}\norm{f}_{L^{\infty}(0,\infty;H)}^{2}\left(1-e^{\lambda_{1}\nu t}\right),
\end{equation}
establishing \eqref{Hcontraction}.

For \eqref{Vcontraction}, take the $L^{2}$ inner product with $Au$. By \eqref{Skiba-estimate} and \eqref{bproperty}
\begin{align}
    & \dfrac{1}{2}\partial_{t}\norm{u}_{H^{1}}^{2}
        +
        \nu\norm{u}_{H^{2}}^{2}
        +
        b(u,u,Au)
        =
        \left\langle f,u\right\rangle  \nonumber \\
     \Rightarrow \
    &    \dfrac{1}{2}\partial_{t}\norm{u}_{H^{1}}^{2}
        +
        \nu\norm{u}_{H^{2}}^{2}
        \leq
        \dfrac{1}{2\nu}\norm{f}_{L^{\infty}(0,\infty;H)}^{2}
        +
        \dfrac{\nu}{2}\norm{u}_{H^{2}}^{2}\label{tointegrateforH2} \quad \text{by \ \eqref{bproperty}} \\
     \Rightarrow \
    &    \partial_{t}\norm{u}_{H^{1}}^{2}
        \leq
        -\nu\lambda_{1}\norm{u}_{H^{1}}^{2}
        +
        \dfrac{1}{\nu}\norm{f}_{L^{\infty}(0,\infty;H)}^{2} \quad \text{by \ (\ref{Poincare})}.\nonumber
\end{align}
Therefore
\begin{equation}
    \norm{S_{t}u_{0}}^{2}_{H^{1}}
    \leq \norm{u_{0}}^{2}_{H^{1}}e^{-\lambda_{1}\nu t}
        +
 \dfrac{1}{\lambda_{1}\nu^{2}}\norm{f}_{L^{\infty}(0,\infty;H)}^{2}\left(1-e^{\lambda_{1}\nu t}\right),
\end{equation}
establishing \eqref{Vcontraction}.

For \eqref{VtoHcontraction}, note that integrating \eqref{tointegrate} from $t_{0}$ to $t+t_{0}$ gives
\begin{align}
    & \norm{S_{t+t_{0}}u_{0}}_{H}^{2}
        -
        \norm{S_{t_{0}}u_{0}}_{H}^{2}
        +
 \nu\int_{t_{0}}^{t+t_{0}}\norm{S_{\tau}u_{0}}_{H^{1}}^{2}d\tau
        = \dfrac{t}{\nu}\norm{f}_{L^{\infty}(0,\infty;H)}^{2} \nonumber \\
     \Rightarrow \
    &  \int_{t_{0}}^{t+t_{0}}\norm{S_{\tau}u_{0}}_{H^{1}}^{2}d\tau
        \leq
        \dfrac{1}{\nu}\norm{S_{t_{0}}u_{0}}_{H}^{2}
        +
        \dfrac{t}{\nu^{2}}\norm{f}_{L^{\infty}(0,\infty;H)}^{2}. \label{integralinequality}
\end{align}
\eqref{Vcontraction} implies that for any $t\geq \dfrac{1}{2}$ and any $t-\dfrac{1}{2}<t_{0}<t$
\begin{equation}
    \norm{S_{t}u_{0}}^{2}_{H^{1}} \leq \norm{S_{t_{0}}u_{0}}^{2}_{H^{1}}+
    \dfrac{\norm{f}_{L^{\infty}(0,\infty;H)}^{2}}{\nu^{2}\lambda_{1}} \label{tointegrate2}.
\end{equation}
Integrating \eqref{tointegrate2} with respect to $t_{0}$ from $t-\dfrac{1}{2}$ to $t$ gives, using \eqref{integralinequality} and \eqref{Hcontraction}
\begin{equation}
\begin{split}
     \dfrac{1}{2}\norm{S_{t}u_{0}}^{2}_{H^{1}}
    &    \leq
        \int_{t-(1/2)}^{t}\norm{S_{t_{0}}u_{0}}^{2}_{H^{1}}dt_{0}
        +
        \dfrac{\norm{f}_{L^{\infty}(0,\infty;H)}^{2}}{2\nu^{2}\lambda_{1}} \\
    & \leq
        \dfrac{1}{\nu}\norm{S_{t-(1/2)}u_{0}}_{H}^{2}
        +
        \dfrac{1}{2\nu^{2}}\norm{f}_{L^{\infty}(0,\infty;H)}^{2}
        +
        \dfrac{\norm{f}_{L^{\infty}(0,\infty;H)}^{2}}{2\nu^{2}\lambda_{1}} \\
    & \leq
        C\norm{u_{0}}_{H}^{2}e^{-\nu\lambda_{1}t} + C_{1}\norm{f}_{L^{\infty}(0,\infty;H)}^{2},
\end{split}
\end{equation}
establishing \eqref{VtoHcontraction}.
\end{proof}

\noindent Now consider the difference between two solutions $w=u-v$
    \begin{equation}
         S_{t}u-S_{t}v =
            \dfrac{\partial w}{\partial t} +
            \nu A+B(w,u)+ B(v,w) + C(w)=0. \label{differenceofsolutions}
    \end{equation}


\begin{lemma} For any $R>0$ and for all $t\geq 0$ the difference of solutions satisfies
\begin{equation}
    \norm{S_{t}w_{0}}_{H}^{2}\leq C(R,f,t)\norm{w_{0}}_{H}^{2}, \label{continuityofNS}
\end{equation}
whenever $\norm{u_{0}}_{H}\leq R$ and $\norm{v_{0}}_{H}\leq R$.
\end{lemma}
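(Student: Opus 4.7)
The plan is to establish the estimate by testing equation \eqref{differenceofsolutions} against $w$ in $H$, killing all terms that admit standard cancellations, and bounding the surviving trilinear contribution so as to set up a Gronwall argument.

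First I would take the $H$ inner product of \eqref{differenceofsolutions} with $w$ to obtain
$$\frac{1}{2}\frac{d}{dt}\norm{w}_{H}^{2} + \nu\norm{w}_{V}^{2} + \langle B(w,u),w\rangle_{H} + \langle B(v,w),w\rangle_{H} + \langle C(w),w\rangle_{H} = 0.$$
The Coriolis pairing vanishes by \eqref{Skiba-estimate}, and the second trilinear pairing vanishes by the antisymmetry in \eqref{bproperties}, since $\langle B(v,w),w\rangle_{H} = b(v,w,w) = 0$.

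Next I would handle the remaining nonlinear term, which by \eqref{bproperties} equals $\langle B(w,u),w\rangle_{H} = b(w,u,w) = -b(w,w,u)$. Applying the Ladyzhenskaya-type bound \eqref{bestimate2} to $b(w,w,u)$ and then Young's inequality with conjugate exponents $4/3$ and $4$,
$$|b(w,w,u)| \leq k\norm{w}_{H}^{1/2}\norm{w}_{V}^{3/2}\norm{u}_{H}^{1/2}\norm{u}_{V}^{1/2} \leq \frac{\nu}{2}\norm{w}_{V}^{2} + C(\nu)\norm{u}_{H}^{2}\norm{u}_{V}^{2}\norm{w}_{H}^{2}.$$
Absorbing the $\tfrac{\nu}{2}\norm{w}_{V}^{2}$ term on the left yields the differential inequality
$$\frac{d}{dt}\norm{w}_{H}^{2} \leq 2C(\nu)\,\norm{u(t)}_{H}^{2}\,\norm{u(t)}_{V}^{2}\,\norm{w}_{H}^{2},$$
so Gronwall's lemma gives $\norm{w(t)}_{H}^{2} \leq \norm{w_{0}}_{H}^{2}\,\exp\!\left(2C(\nu)\int_{0}^{t}\norm{u(s)}_{H}^{2}\norm{u(s)}_{V}^{2}\,ds\right)$.

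Finally I would bound the Gronwall exponent uniformly for $\norm{u_{0}}_{H} \leq R$: by \eqref{Hcontraction}, $\sup_{s\geq 0}\norm{u(s)}_{H}^{2} \leq C_{1}(R,f,\nu,\lambda_{1})$, and integrating \eqref{tointegrate} (as in \eqref{integralinequality}) gives $\int_{0}^{t}\norm{u(s)}_{V}^{2}\,ds \leq C_{2}(R,f,\nu,\lambda_{1})(1+t)$. Therefore the exponent is controlled by a quantity of the form $C(R,f)(1+t)$, which delivers the stated constant $C(R,f,t)$. The only mildly delicate point is matching the Young exponents so that the unfavorable power $\norm{w}_{V}^{3/2}$ is absorbed by the dissipation while leaving only an integrable power of $\norm{u}_{V}$ on the right; once that split is made, the remainder of the argument is routine and the hypothesis $\norm{v_{0}}_{H}\leq R$ is used only to ensure that $v$ itself lies in the same absorbing regime should one prefer a symmetric estimate (the bound above in fact depends only on $u$).
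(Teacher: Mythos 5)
Your proposal is correct and follows essentially the same route as the paper: an $H$-energy estimate on the difference equation, cancellation of the Coriolis term and of $b(v,w,w)$, an estimate of the surviving trilinear term via \eqref{bestimate2} plus Young's inequality, and a Gronwall argument whose exponent is controlled by \eqref{integralinequality}. The only (immaterial) difference is that the paper bounds $b(w,u,w)$ directly, yielding the factor $\norm{u}_{H^{1}}^{2}$ in the Gronwall exponent, whereas you flip to $-b(w,w,u)$ and obtain $\norm{u}_{H}^{2}\norm{u}_{H^{1}}^{2}$, which is handled by the same a priori bounds.
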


\begin{proof}
Taking the $L^{2}$ inner product with $w$
\begin{equation}
         \dfrac{1}{2}\partial_{t}\norm{w}^{2}_{H}
             +
             \nu \norm{w}^{2}_{H^{1}}
             +
             b(w,u,w)
             +
             b(v,w,w)
             = 0.
\end{equation}
Therefore, by \eqref{bproperties},
\begin{align}
         \dfrac{1}{2}\partial_{t}\norm{w}^{2}_{H}
             +
             \nu \norm{w}^{2}_{H^{1}}
        &    \leq |b(w,u,w)|  \\
        &    \leq k\norm{w}_{H^{1}}\norm{u}_{H^{1}}\norm{w}_{H}\quad \text{by \ \eqref{bestimate2}}.  \nonumber 
\end{align}
By the Cauchy inequality
\begin{equation}
            \partial_{t}\norm{w}^{2}_{H}
            +
            \nu\norm{w}^{2}_{H^{1}}
            \leq
            \dfrac{k^{2}}{\nu}\norm{w}^{2}_{H}\norm{u}^{2}_{H^{1}}
             \label{forVnorm}
\end{equation}
and thus by \eqref{Poincare}
\begin{equation}
   \norm{S_{t}w_{0}}_{H}^{2}
            \leq
            \text{exp}\left(-\nu\lambda_{1}t+
            \displaystyle\int_{0}^{t}\dfrac{k^{2}}{\nu}
 \norm{S_{s}u_{0}}^{2}_{H^{1}}ds\right)\norm{w_{0}}_{H}^{2}.
            \label{cansimplifyforstationary}
\end{equation}
By \eqref{integralinequality}
\begin{equation}
    \norm{S_{t}w_{0}}_{H}^{2}
    \leq
    \text{exp}\left(-\nu\lambda_{1}t+
            \dfrac{k^{2}}{\nu^{2}}
            \norm{u_{0}}^{2}_{H}
            +
 \dfrac{k^{2}}{\nu^{3}}\norm{f}_{L^{\infty}(0,\infty;H)}^{2}t\right)\norm{w_{0}}_{H}^{2}.
            \label{differenceofsolutions2}
\end{equation}
The exponential is less than or equal to some constant (depending on R and the norms of $f$) for any fixed $t\geq 0$.
\end{proof}

\begin{remark}\label{differenceofsolutionssatisfiescontraction} By \eqref{differenceofsolutions2} in order to ensure \eqref{FForceCondition} it is sufficient that
\begin{equation} \nonumber
    \norm{f}_{L^{\infty}(0,\infty;H)} < \dfrac{\nu^{2}\sqrt{\lambda_{1}}}{k}. \label{thisishowsmall}
\end{equation}

\noindent If equation \eqref{thisishowsmall} is satisfied, there is a unique globally exponentially stable solution that is periodic with the same period as the force.
\end{remark}

\subsection{Proof of Condition \ref{conditionforconvergence}}


\begin{lemma} Let $w=u-v$ and for any $R>0$ let $\norm{u_{0}}_{H}<R$ and $\norm{v_{0}}_{H}<R$. The following estimate holds for all $t\geq 1$:
\begin{equation}
    \norm{S_{t}w_{0}}_{H^{1}} \leq
    C(R,\norm{f}_{L^{\infty}(0,\infty;H)},t)\ \norm{w_{0}}_{H} . \label{HboundonVdifference}
\end{equation}
\end{lemma}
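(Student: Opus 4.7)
The strategy is a standard bootstrap: use the already--established $H$--norm estimate \eqref{continuityofNS} and its integrated form to gain an $L^2(0,1;V)$ bound on $w$, then pay a factor of the elapsed time to promote this to a pointwise $H^1$ bound via a uniform Gronwall argument on the equation obtained by testing \eqref{differenceofsolutions} with $Aw$. The reason we need $t\geq 1$ is exactly that one waits a full time unit for the smoothing of the linearized flow to take effect.

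First I would integrate \eqref{forVnorm} from $0$ to $1$. Using \eqref{continuityofNS} to bound $\norm{S_{s}w_{0}}_{H}^{2}$ on $[0,1]$ by $C(R,f)\norm{w_{0}}_{H}^{2}$ and \eqref{integralinequality} to bound $\int_{0}^{1}\norm{S_{s}u_{0}}_{H^{1}}^{2}ds$, one obtains
\begin{equation}
\int_{0}^{1}\norm{S_{s}w_{0}}_{H^{1}}^{2}\,ds\leq C(R,f)\norm{w_{0}}_{H}^{2}. \nonumber
\end{equation}
In parallel, taking the inner product of the Navier--Stokes equation for $u$ with $Au$, using that the Coriolis contribution vanishes by \eqref{Skiba-estimate} and that $b(u,u,Au)=0$ by \eqref{bproperty}, and integrating yields $\nu\int_{0}^{1}\norm{u(s)}_{H^{2}}^{2}ds\leq C(R,f)$ (and similarly for $v$); this is the ingredient that will tame the "bad" trilinear term below.

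Next I take the $H$--inner product of \eqref{differenceofsolutions} with $Aw$. The Coriolis term drops again by \eqref{Skiba-estimate}, and the two trilinear terms are estimated as
\begin{align}
|b(w,u,Aw)| & \leq k\norm{w}_{H^{1}}\norm{u}_{H^{2}}\norm{Aw}_{H}\leq \tfrac{\nu}{4}\norm{Aw}_{H}^{2}+\tfrac{C}{\nu}\norm{u}_{H^{2}}^{2}\norm{w}_{H^{1}}^{2}, \nonumber \\
|b(v,w,Aw)| & \leq k\norm{v}_{H}^{1/2}\norm{v}_{H^{1}}^{1/2}\norm{w}_{H^{1}}^{1/2}\norm{w}_{H^{2}}^{1/2}\norm{Aw}_{H}\leq \tfrac{\nu}{4}\norm{Aw}_{H}^{2}+\tfrac{C}{\nu^{3}}\norm{v}_{H}^{2}\norm{v}_{H^{1}}^{2}\norm{w}_{H^{1}}^{2}, \nonumber
\end{align}
using \eqref{bestimate3}, \eqref{bestimate4}, the equivalence of $\norm{w}_{H^{2}}$ with $\norm{Aw}_{H}$, and Young's inequality. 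Combined, this gives a differential inequality of the form
\begin{equation}
\partial_{t}\norm{w}_{H^{1}}^{2}+\nu\norm{Aw}_{H}^{2}\leq g(t)\,\norm{w}_{H^{1}}^{2}, \nonumber
\end{equation}
where $g(t)=C\bigl(\tfrac{1}{\nu}\norm{u}_{H^{2}}^{2}+\tfrac{1}{\nu^{3}}\norm{v}_{H}^{2}\norm{v}_{H^{1}}^{2}\bigr)$ has finite integral on $[0,t]$ bounded by a constant $C(R,f,t)$ thanks to \eqref{Hcontraction}, \eqref{integralinequality}, and the strong--energy bound above.

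Finally I would apply the uniform Gronwall lemma on the interval $[0,1]$: the $L^{1}_{t}$ bound on $\norm{w}_{H^{1}}^{2}$ from Step 1 plus the $L^{1}_{t}$ bound on $g$ yield
\begin{equation}
\norm{S_{1}w_{0}}_{H^{1}}^{2}\leq C(R,f)\,\norm{w_{0}}_{H}^{2}, \nonumber
\end{equation}
and a second application of ordinary Gronwall to the same inequality on $[1,t]$ promotes this to the advertised estimate \eqref{HboundonVdifference}. The main obstacle, and the reason the argument cannot be pushed down to $t<1$, is the $\norm{u}_{H^{2}}$ factor in the estimate of $b(w,u,Aw)$: this factor is only square--integrable in time (not essentially bounded), so one must exploit the interplay between the $L^{1}_{t}$ smoothing of $\norm{w}_{H^{1}}^{2}$ and the $L^{1}_{t}$ control on $\norm{u}_{H^{2}}^{2}$ through the uniform Gronwall mechanism rather than through a direct pointwise argument.
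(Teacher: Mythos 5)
Your overall strategy is the same as the paper's: test the difference equation with $Aw$, estimate the trilinear terms via \eqref{bestimate3}--\eqref{bestimate4}, and close a Gronwall argument using time-integrated $H^{2}$ bounds on the base solutions. However, there is a genuine gap in your second step. You claim that integrating the $Au$--energy identity from $0$ to $1$ yields $\nu\int_{0}^{1}\norm{u(s)}_{H^{2}}^{2}\,ds\leq C(R,f)$. Integrating \eqref{tointegrateforH2} over $[0,1]$ actually gives
\begin{equation}
\nu\int_{0}^{1}\norm{S_{\tau}u_{0}}_{H^{2}}^{2}\,d\tau\leq\norm{u_{0}}_{H^{1}}^{2}+\frac{1}{\nu}\norm{f}_{L^{\infty}(0,\infty;H)}^{2},\nonumber
\end{equation}
and the lemma only assumes $\norm{u_{0}}_{H}<R$, not $u_{0}\in V$. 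For $u_{0}\in H\setminus V$ the parabolic smoothing rate $\norm{S_{s}u_{0}}_{H^{2}}\sim s^{-1}\norm{u_{0}}_{H}$ makes $\int_{0}^{1}\norm{S_{s}u_{0}}_{H^{2}}^{2}\,ds$ genuinely divergent, so your function $g$ is not in $L^{1}(0,1)$ and the uniform Gronwall step on $[0,1]$ cannot be run. (Your closing remark attributes the restriction $t\geq 1$ to $\norm{u}_{H^{2}}$ being merely square-integrable in time; the real obstruction is the lack of $H^{1}$ regularity of the initial data at $t=0$.)

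The repair is exactly what the paper does: first use the smoothing estimate \eqref{VtoHcontraction} so that the $H^{2}$ integral is only ever taken over $[s,t]$ with $s>1/2$, where \eqref{H2integralbound} gives $\nu\int_{s}^{t}\norm{S_{\tau}u_{0}}_{H^{2}}^{2}\,d\tau\leq C\norm{u_{0}}_{H}^{2}+tC(\norm{f}_{L^{\infty}(0,\infty;H)})+C(\norm{f}_{L^{\infty}(0,\infty;H)})$; and second, instead of a uniform Gronwall lemma on $[0,1]$, apply the mean value theorem to $2\nu\int_{1/2}^{1}\norm{S_{\tau}w_{0}}_{H^{1}}^{2}\,d\tau$ (which you do correctly control by $C(R,f)\norm{w_{0}}_{H}^{2}$) to produce a single time $s\in(1/2,1)$ with $\norm{S_{s}w_{0}}_{H^{1}}^{2}\leq C(R,f)\norm{w_{0}}_{H}^{2}$, and then run the pointwise Gronwall inequality from that $s$ up to $t$. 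With that modification your argument closes and is essentially the paper's proof.
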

\begin{proof}
  Integrating \eqref{tointegrateforH2} from $s$ to $t$ gives
  \begin{align}
       \nu\int_{s}^{t}\norm{S_{\tau}u_{0}}_{H^{2}}^{2}d\tau 
    = & \norm{S_{s}u_{0}}_{H^{1}}^{2} - \norm{S_{t}u_{0}}_{H^{1}}^{2}
      + \dfrac{t}{\nu}\norm{f}_{L^{\infty}(0,\infty;H)}^{2} \nonumber \\
 \leq &
      \norm{S_{s}u_{0}}_{H^{1}}^{2}+ \dfrac{t}{\nu}\norm{f}_{L^{\infty}(0,\infty;H)}^{2}. \label{mvtestimate}
  \end{align}
Using \eqref{VtoHcontraction} gives for $1/2<s<t$
\begin{equation}
    \nu\int_{s}^{t}\norm{S_{\tau}u_{0}}_{H^{2}}^{2}d\tau
    \leq
    C\norm{u_{0}}_{H}^{2}+ tC(\norm{f}_{L^{\infty}(0,\infty;H)}) +C(\norm{f}_{L^{\infty}(0,\infty;H)}). \label{H2integralbound}
\end{equation}
Integrating \eqref{forVnorm} from 1/2 to 1 and by the Mean Value Theorem there is $s\in \left(1/2,1\right)$ such  that
\begin{align}
        \nu\norm{S_{s}w_{0}}_{H^{1}}^{2} \nonumber
    &    =
        2\nu\int_{1/2}^{1}\norm{S_{\tau}w_{0}}_{H^{1}}^{2}d\tau \quad \text{and \ by \ \eqref{mvtestimate}}  \nonumber \\
    &    \leq
        C(R,1/2)\norm{w_{0}}_{H}^{2}
        +
        \dfrac{k^{2}}{\nu}\int_{1/2}^{1}\norm{S_{\tau}w_{0}}_{H}^{2}
        \norm{S_{\tau}u_{0}}_{H^{1}}^{2}d\tau \nonumber \\
    &    \leq
        C(R,\norm{f}_{L^{\infty}(0,\infty;H)})\norm{w_{0}}_{H}^{2} \quad \text{by \ \eqref{integralinequality} \ and \ \eqref{continuityofNS}}.
        \label{boundfors}
\end{align}
Taking the $L^{2}$ inner product of \eqref{differenceofsolutions} with $Aw$ gives
\begin{equation}
     \dfrac{1}{2}\partial_{t}\norm{w}_{H^{1}}^{2}
     +
     \nu\norm{w}_{H^{2}}^{2}
        \leq \left|b(u,w,Aw)\right| + \left|b(w,v,Aw)\right|.
        \label{boundedinnextstep}
\end{equation}
By \eqref{bestimate4} and \eqref{bestimate3} respectively, the right side of \eqref{boundedinnextstep} is bounded above by
\begin{equation}
\begin{split}
    & \leq k\norm{u}_{H^{1}}^{1/2}\norm{u}_{H}^{1/2}\norm{w}_{H^{1}}^{1/2}
        \norm{w}_{H^{2}}^{1/2}\norm{w}_{H^{2}}
        +  k\norm{w}_{H^{1}}\norm{v}_{H^{2}}\norm{w}_{H^{2}} \\
    & \leq
        K\norm{u}^{2}_{H^{1}}\norm{u}_{H}^{2}\norm{w}^{2}_{H^{1}}
        + \dfrac{\nu}{2}\norm{w}_{H^{2}}^{2}
        + C\norm{w}_{H^{1}}^{2}\norm{v}_{H^{2}}^{2}\quad    \text{by \ Cauchy}.
\end{split}
\end{equation}
Therefore
\begin{equation}
     \partial_{t}\norm{w}_{H^{1}}^{2}
        \leq
        \left(-\nu\lambda_{1}+K\left(\norm{u}_{H^{1}}^{2}+
        \norm{v}_{H^{2}}^{2}\right)\right)\norm{w}_{H^{1}}^{2}
\end{equation}
and
\begin{eqnarray}
    & \norm{S_{t}w_{0}}_{H^{1}}^{2} \leq \norm{S_{s}w_{0}}_{H^{1}}^{2}\times \nonumber \\
    & \text{exp}\left(-\nu\lambda_{1}(t-s) + k\int_{s}^{t}
 \left(\norm{S_{\tau}u_{0}}_{H^{1}}^{2}\norm{S_{\tau}u_{0}}_{H}^{2}
        +\norm{S_{\tau}v_{0}}_{H^{2}}^{2}\right)d\tau\right).
\end{eqnarray}
By \eqref{Hcontraction} and \eqref{H2integralbound} this is bounded above by
\begin{eqnarray}
        \leq & 
            \norm{S_{s}w_{0}}_{H^{1}}^{2}\text{exp}\left(-\nu\lambda_{1}(t-1)\right)\times \nonumber \\
        &    \text{exp}\left(C(R,\norm{f}_{L^{\infty}(0,\infty;H)})\int_{0}^{t}
            \norm{S_{\tau}u_{0}}_{H^{1}}^{2}d\tau\right)\times \nonumber \\
        &    \text{exp}\left(C(R)+tC\norm{f}_{L^{\infty}(0,\infty;H)}^{2}\right).
\end{eqnarray}
By \eqref{integralinequality} and \eqref{boundfors} this is bounded above by
\begin{eqnarray}
  \leq & 
         C(R,\norm{f}_{L^{\infty}(0,\infty;H)})\norm{w_{0}}_{H^{1}}^{2} \text{exp}\left(-\nu\lambda_{1}(t-1)\right)
         \times \nonumber \\
       &     \text{exp}\left(C(R,\norm{f}_{L^{\infty}(0,\infty;H)})
            +C(R)+tC\norm{f}_{L^{\infty}(0,\infty;H)}^{2}\right).
\end{eqnarray}
This establishes \eqref{HboundonVdifference}.
\end{proof}

\noindent Let $Q= (I-P_{N})S_{t}w_{0} = (I-P_{N})(S_{t}(u_{0}-v_{0}))$.

\noindent Then
\begin{eqnarray}
        &    \norm{Q}_{H}^{2} 
            \leq \dfrac{1}{\lambda_{N+1}}\norm{Q}_{H^{1}}^{2}
            \leq \dfrac{1}{\lambda_{N+1}}\norm{S_{t}w_{0}}_{H^{1}}^{2} \nonumber \\
        &    \leq \dfrac{C(R,\norm{f}_{L^{\infty}(0,\infty;H)},t)}{\lambda_{N+1}}\norm{w_{0}}_{H}^{2}
            :=\gamma_{N}\norm{w_{0}}_{H}^{2},
\end{eqnarray}
where the last step is by \eqref{HboundonVdifference}. For any $t\geq 1$ a $N$ can be found (depending on t, R, and f) such that the $\gamma_{N}$ is less than or equal to any $q>0$. Since $t=1$ for the kicked equations, $N$ can be chosen only depending on $R$ and $f$.

\subsection{Proof of Lemma \ref{thiszonallemma}}

The proof is analogous to a calculation in \cite{Ilin2}, pp. 69-70 (done for $f=2\nu\curl(-a\sin\phi)$), and the steps given in \cite{Varner}, Lemma 5.3.2, where a general time-independent zonal force is considered.

\noindent Let $u=\bar{u}+u'$ solve the time-dependent Navier-Stokes equations with forcing $f$ where $u'$ is a perturbation and $\bar{u}$ is the zonal solution. 
The perturbation solves
\begin{equation}
	\partial_{t}u'+\nu Au' + Gu' + B(u',u')=0,
\end{equation}
where 
\begin{equation}
	Gu'= C(u') + B(\bar{u},u')+B(u',\bar{u}).
\end{equation}
Dropping the primes for ease of notation and taking the inner product with $Au$ gives
\begin{equation}
	\dfrac{1}{2}\partial_{t}\norm{u}_{1}^{2}
	+
	\nu \norm{u}_{2}^{2}
	+ 
	\left\langle Gu,Au\right\rangle
	=
	0.
\end{equation}
$\left\langle Gu,Au\right\rangle=0$ by \eqref{Skiba-estimate}, \eqref{strongerthanskiba}, and \eqref{Calc3}.
Thus for any $t\geq \frac{1}{2}$ the perturbation satisfies
\begin{equation}
	\begin{split}
	& \dfrac{1}{2}\partial_{t}\norm{u}_{1}^{2}+\nu\norm{u}_{2}^{2}=0 \\
	\Rightarrow \
	& \norm{S_{t}u_{0}}_{H^{1}}^{2}\leq
	Ce^{-2\nu\lambda_{1}t}\norm{u(1/2)}_{H^{1}}^{2}.
	\end{split}
\end{equation}
Since $\norm{u'(1/2)}_{H^{1}}\leq \norm{u(1/2)}_{H^{1}}+\norm{\bar{u}(1/2)}_{H^{1}}$, \eqref{VtoHcontraction} gives
\begin{equation}
	\norm{S_{t}u_{0}}^{2}_{H} \leq \norm{S_{t}u_{0}}_{H^{1}}^{2}
	\leq
	C(\norm{u_{0}}_{H},\norm{\bar{u}_{0}}_{H})e^{-2\nu\lambda_{1}t}\norm{u_{0}}_{H}^{2}.
\end{equation}
Thus the solution is asymptotically attracting in $H$. \qed

\subsection{Proof of Lemma \ref{AlmostZonalProp}}

\noindent The proof uses a different approach than the analogous result in \cite{Varner}, Proposition 5.4.1. Instead we show that if the solution to the Navier-Stokes equations with a nonzonal force is ``close enough'' to the zonal solution, then it is globally exponentially stable. We then use standard estimates to the express the inequalities in terms of the distance from the force $f$. 

Let $u$ be the unique zonal solution for the Navier-Stokes equations with force $f$. Suppose $g$ is such that there exists $v=u+\bar{v}$ that solves
\begin{equation}
	\partial_{t}v+\nu Av +B(v,v) + C(v) = g.  \label{almostzonalnavierstokes}
\end{equation}
Let $\psi$ be another solution to \eqref{almostzonalnavierstokes} and consider $q= \psi - v$ which solves
\begin{equation}
	\partial_{t}q + \nu Aq + B(\psi,\psi) - B(v,v) + C(q) = 0.
\end{equation}
Rewriting the nonlinear terms gives
\begin{equation}
	\partial_{t}q + \nu Aq + B(q,q) + B(u,q)+ B(q,u)+B(\bar{v},q)+B(q,\bar{v}) +  C(q) = 0.
\end{equation}
Take the inner product with $Aq$. By \eqref{bproperty} $b(q,q,Aq) = 0$ and since $u$ is zonal $b(u,q,Aq) =0$ and $b(q,u,Aq)=0$ by \eqref{strongerthanskiba} and \eqref{Calc3}. By equations \eqref{bestimate3}, \eqref{bestimate4}, and the Cauchy inequality the trilinear terms $b(\bar{v},q,Aq)$ and $b(q,\bar{v},Aq)$ satisfy
\begin{align}
	b(\bar{v},q,Aq) + b(q,\bar{v},Aq)
	\leq & C\norm{\bar{v}}_{H^{1}}\norm{q}_{H^{1}}^{1/2}\norm{q}_{H^{2}}^{1/2}\norm{q}_{H^{2}} + C\norm{q}_{H^{1}}\norm{q}_{H^{2}}\norm{\bar{v}}_{H^{2}} \nonumber \\
	\leq & \frac{\nu}{4}\norm{q}_{H^{2}}^{2} + C\norm{\bar{v}}_{H^{1}}^{2}\norm{q}_{H^{1}}\norm{q}_{H^{2}} + C\norm{q}_{H^{1}}^{2}\norm{\bar{v}}_{H^{2}}^{2} \nonumber \\
	\leq & \frac{\nu}{2}\norm{q}_{H^{2}}^{2} + C\norm{q}_{H^{1}}^{2}\left(\norm{\bar{v}}_{H^{1}}^{4}+\norm{\bar{v}}_{H^{2}}^{2}\right). \nonumber
\end{align}
Thus 
\begin{equation}
			\partial_{t}\norm{q}_{H^{1}}^{2}
			\leq
			\norm{q}_{H^{1}}^{2}
			\left(-\lambda_{1}\nu 
			+C
			\left[\norm{\bar{v}}_{H^{1}}^{4}
			+\norm{\bar{v}}_{H^{2}}^{2}\right]\right).	\label{almostzonalrestriction}	
\end{equation}
For any $t>\frac{1}{2}$ integrating \eqref{almostzonalrestriction} on $\left[\frac{1}{2},t\right]$ gives 
\begin{equation}
			\norm{q(t)}_{H^{1}}^{2}
			\leq
			\norm{q\left(1/2\right)}_{H^{1}}^{2}
			exp\left(-\lambda_{1}\nu 
			+C
			\int_{1/2}^{t}\left[\norm{\bar{v}(\tau)}_{H^{1}}^{4}+ \norm{\bar{v}(\tau)}_{H^{2}}^{2}d\tau\right]\right).	
\end{equation}
Since $\norm{q\left(1/2\right)}_{H^{1}}^{2} \leq \norm{\psi\left(1/2\right)}_{H^{1}}^{2}+\norm{v\left(1/2\right)}_{H^{1}}^{2}$ by \eqref{VtoHcontraction}
\begin{equation} \label{almostzonalrestriction2}
	\begin{split}
		&	\norm{q(t)}_{H^{1}}^{2}
			\leq
			C(\norm{v_{0}}_{H},\norm{\psi_{0}}_{H}, \norm{f}_{L^{\infty}(0,T;H)}) \times \\
		&	exp\left(-\lambda_{1}\nu t
			+C
			\left[\int_{1/2}^{t}\norm{\bar{v}(\tau)}_{H^{1}}^{4}d\tau
			+\int_{1/2}^{t}\norm{\bar{v}(\tau)}_{H^{2}}^{2}d\tau\right]\right). \nonumber 
	\end{split}
\end{equation}
Thus if the norms of $\bar{v}$ are small enough then the unique solution $v$ is globally exponentially stable in $H^{1}$ (and thus in $H$).

It remains to express the norms of $\bar{v}$ in terms of the difference of forces. Since $\bar{v}=v-u$, consider the difference between the Navier-Stokes equations with force $f$ and zonal solution $u$ and equation \eqref{almostzonalnavierstokes} getting 
\begin{equation}
	\partial_{t}\bar{v} +\nu A\bar{v} - B(u,u)+B(v,v)+C(\bar{v})=f-g.
\end{equation}
Since 
$- B(u,u)+B(v,v)= B(u,\bar{v})+B(\bar{v},u)+B(\bar{v},\bar{v})$ and since $u$ is zonal, the inner product with $A\bar{v}$ and equations \eqref{bproperty}, \eqref{strongerthanskiba}, and \eqref{Calc3}  give
\begin{equation}
	\partial_{t}\norm{\bar{v}}_{H^{1}}^{2} +\nu \norm{\bar{v}}_{H^{2}}^{2}\leq C\norm{f-g}_{H}^{2}. \label{forwhatscoming}
\end{equation}
Integrating from $\left[\frac{1}{2},t\right]$, using $\norm{\bar{v}(1/2)}_{H^{1}}\leq \norm{u(1/2)}_{H^{1}}+\norm{v(1/2)}_{H^{1}}$, and \eqref{VtoHcontraction} gives 
\begin{equation}
	\int_{1/2}^{t}\norm{\bar{v}}_{H^{2}}^{2} \leq C\left(\norm{v_{0}}_{H},\norm{u_{0}}_{H}\right)+ C\norm{f-g}_{L^{\infty}(0,\infty;H)}^{2}t.
\end{equation}
Similarly using \eqref{Poincare} and integrating \eqref{forwhatscoming} from $\left[\frac{1}{2},t\right]$ yields
\begin{equation}
	\norm{\bar{v}(t)}_{H^{1}}^{2}\leq C(\norm{v_{0}}_{H},\norm{u_{0}}_{H})e^{-\lambda_{1}\nu t}+C\norm{f-g}_{L^{\infty}(0,\infty;H)}^{2}.
\end{equation}
Thus by Cauchy's inequality
\begin{equation}
	\norm{\bar{v}(t)}^{4}_{H^{1}}\leq Ce^{-2\lambda_{1}\nu t}+C\norm{f-g}_{L^{\infty}(0,\infty;H)}^{4}.
\end{equation}
Thus the exponential term in \eqref{almostzonalrestriction2} is bounded above by 
\begin{equation} \nonumber
			exp\left(C(\norm{v_{0}}_{H},\norm{u_{0}}_{H}) +Ct\left(-\lambda_{1}\nu+\norm{f-g}_{L^{\infty}(0,T;H)}^{4}+ \norm{f-g}_{L^{\infty}(0,\infty;H)}^{2}\right) \right). 
\end{equation}
Therefore there is $\delta>0$ such that if $\norm{f-g}_{L^{\infty}(0,\infty;H)}^{2}\leq \delta$ then the unique solution $v$ is globally exponentially stable in $H$. \qed

\bibliographystyle{amsplain}

\end{document}